\documentclass{amsart}
\usepackage{amssymb,latexsym}
\usepackage{amsmath,amsfonts,amsthm}
\usepackage[dvips]{graphicx}
\usepackage[all]{xy}
\newtheorem{theorem}{Theorem}[section]
\newtheorem{prop}[theorem]{Proposition}
\newtheorem{lemma}[theorem]{Lemma}
\newtheorem{remark}[theorem]{Remark}
\newtheorem{question}[theorem]{Question}
\newtheorem{definition}[theorem]{Definition}
\newtheorem{cor}[theorem]{Corollary}

\def\co{\colon\thinspace}
\def\delbar{\bar{\partial}}

\def\ep{\epsilon}
\begin{document}
\title[Floer homology and twisted geodesic flows]{Floer homology in disc bundles and symplectically twisted geodesic flows}
\author{Michael Usher}
\address{Department of Mathematics, University of Georgia, Athens, GA 30602}
\email{usher@math.uga.edu}
\begin{abstract}  We show that if $K\co P\to\mathbb{R}$ is an autonomous Hamiltonian on a symplectic manifold $(P,\Omega)$ which attains $0$ as a Morse-Bott nondegenerate minimum along a symplectic submanifold $M$, and if $c_1(TP)|_M$ vanishes in real cohomology, then the Hamiltonian flow of $K$ has contractible periodic orbits with bounded period on \emph{all} sufficiently small energy levels.  As a special case, if the geodesic flow on $T^*M$ is twisted by a symplectic magnetic field form, then the resulting flow has contractible periodic orbits on all low energy levels.  These results were proven by Ginzburg and G\"urel when $\Omega|_M$ is spherically rational, and our proof builds on their work; the argument involves constructing and carefully analyzing at the chain level a version of filtered Floer homology in the symplectic normal disc bundle to $M$.  
\end{abstract}
\maketitle
\section{Introduction}
Consider a symplectic manifold $(P,\Omega)$ containing a closed, connected symplectic submanifold $M$, with $2m=\dim M$, $2n=\dim P$, and $r=n-m$.  In recent years, there has been significant interest in the following question: \begin{question} \label{question} If $K\co P\to [0,\infty)$ is a proper smooth function with $K^{-1}(\{0\})=M$, must it be the case that the Hamiltonian vector field of $X_K$ (which in our convention is given by $\iota_{X_K}\Omega=dK$) has periodic orbits on all regular energy levels $K^{-1}(\{\rho\})$, provided that $\rho>0$ is sufficiently small?\end{question}

Already in the case that $M=\{pt\}$, V. Ginzburg and B. G\"urel show in \cite{GG1} that their negative resolution of the Hamiltonian Seifert conjecture (\cite{Gi}, \cite{GG0}) implies that the answer to Question \ref{question} is negative for some smooth Hamiltonians when $n\geq 3$ and for some $C^2$-Hamiltonians when $n\geq 2$ unless the requirements are weakened in some way.

One way of weakening the requirements is to require existence of periodic orbits only on a suitably large set of low energy levels, where ``large'' might be interpreted as meaning either ``dense'' or ``full measure.''  A considerable amount of work was done on this version of the problem (\emph{e.g.}, in  \cite{GK1}, \cite{Ke1}, \cite{GK2},  \cite{CGK}, \cite{GG1}, \cite{Ke2}), 
 culminating in results of L. Macarini \cite{M} and G. Lu \cite{L} which show that, for some $\rho_0>0$, it is the case that the level surfaces $K^{-1}(\{\rho\})$ contain periodic orbits for Lebesgue-almost-every regular value $\rho\in [0,\rho_0]$. Further, these periodic orbits are all contractible within a tubular neighborhood of $M$.

Another way of weakening the requirements of Question \ref{question} is to retain the requirement that orbits exist on all low energy levels, but to constrain the form of the function $K$ in some way.  It is this version of the question that we shall consider in this paper, with $K$ required to attain a Morse--Bott nondegenerate minimum along $M$ (in other words, the Hessian of $K$ is to restrict nondegenerately to the normal bundle of $M$).  An interesting special case of this (which historically served as much of the motivation for Question \ref{question}) is the following.  

Where $M$ is a closed Riemannian manifold, the motion of a particle of unit mass and unit charge in a magnetic field on $M$ is modeled by setting $P=T^*M$ and letting $\Omega=\Omega_{\sigma}=\omega_{can}+\tau^*\sigma$ where $\omega_{can}$ is the standard symplectic form on $T^*M$, $\tau\co T^*M\to M$ is the bundle projection, and $\sigma$ is a closed $2$-form on $M$ which represents the magnetic field.  The phase-space trajectory of the particle is then given by the Hamiltonian vector field $X_K$ of the standard kinetic energy Hamiltonian $K(q,p)=\frac{1}{2}|p|^2$.  Thus, $M$ is a symplectic submanifold of $(T^*M,\Omega_{\sigma})$ (so that this fits into the framework of Question \ref{question}) if and only if the magnetic field $2$-form $\sigma$ on $M$ is symplectic.  Of course, the case $\sigma=0$ just corresponds to the geodesic flow on $M$; accordingly the Hamiltonian flow of $K$ on $(T^*M,\Omega_{\sigma})$ is sometimes called the $\sigma$-twisted geodesic flow.  The search for periodic orbits of twisted geodesic flows  was initiated by V. Arnol'd (for the case $M=T^2$) in \cite{A}, and has continued in, \emph{e.g.}, \cite{G87}, \cite{Gsurv}, \cite{G96}, \cite{Lu0}, \cite{M0}, \cite{C}, \cite{CMP}, \cite{FS}, \cite{Pa}, \cite{Schlenk}, \cite{GG2}. In particular, it is shown by F. Schlenk in \cite{Schlenk} that there is $\rho_0>0$ such that $X_K$ has contractible periodic orbits on Lebesgue-almost-every energy level $\rho\in [0,\rho_0]$, provided merely that $\sigma$ does not vanish identically (thus in Schlenk's result $\sigma$ need not be symplectic, putting this result in a somewhat different category than those of \cite{L},\cite{M}).  

Note that choosing a compatible almost complex structure on $(P,\Omega)$ makes $TP$ into a complex vector bundle, which in particular has Chern classes $c_k(TP)\in H^{2k}(P;\mathbb{Z})$. 
Our main result is the following. 
\begin{theorem}\label{main} Let $M$ be a closed connected symplectic submanifold of the symplectic manifold $(P,\Omega)$, and let $\nu M$ be a tubular neighborhood of $M$.  Suppose that $K\co P\to [0,\infty)$ is a smooth function with $K^{-1}(\{0\})=M$, such that $K$ attains a Morse--Bott nondegenerate minimum along $M$.  Assume furthermore that $c_1(TP)|_{\nu M}$ represents a torsion class in $H^2(\nu M;\mathbb{Z})$.  Then there are $\rho_0>0$ and $T_0>0$ such that, for \emph{every} $\rho\in (0,\rho_0)$, the Hamiltonian flow of $K$ has a periodic orbit on $K^{-1}(\{\rho\})$ which is contained in and contractible in $\nu M$ and has period no larger than $T_0$. 
\end{theorem}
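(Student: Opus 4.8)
The plan is to localize everything in the symplectic normal disc bundle of $M$ and run a Floer-homological argument there, following the strategy of Ginzburg and G\"urel but carrying it out at the chain level so as to avoid their spherical rationality assumption on $\Omega|_M$. First I would fix a normal form: by the symplectic neighborhood theorem, after shrinking $\nu M$ I may take it to be a disc subbundle of the symplectic normal bundle $\pi\co E\to M$, with symplectic form restricting to $\Omega|_M$ on the zero section and to the fiberwise linear form near it; and, after a fiber-preserving symplectomorphism and a reparametrization of $K$, I may assume that near $M$ the Hamiltonian $K$ agrees to high order with its normal Hessian, a fiberwise positive-definite quadratic form whose flow is fiberwise a superposition of harmonic oscillators with $M$-dependent frequencies $\lambda_1(q),\dots,\lambda_r(q)$. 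The hypothesis that $c_1(TP)|_{\nu M}$ is torsion enters at this stage: it makes the Conley--Zehnder/Robbin--Salamon indices of contractible orbits of the Hamiltonians used below well defined, so that the Floer complexes are graded and finitely generated in each degree no matter how the action spectrum of $\Omega|_M$ behaves, and it gives the uniform control of $c_1$-weighted areas needed for the energy estimates.

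The heart of the matter is a \emph{local filtered Floer homology} attached to $M$. For Hamiltonians of the form $H=f(K)$ with $f$ convex, flat near $0$, and of controlled slope, and for a suitable $\Omega$-compatible almost complex structure, I would establish a crossing-energy estimate: there is $\hbar>0$ such that any Floer cylinder meeting $\{K\le\varepsilon_0\}$ and also leaving $\{K\le\varepsilon_1\}$ (for appropriate small $0<\varepsilon_0<\varepsilon_1$) has energy at least $\hbar$. Restricting to $1$-periodic orbits near $M$ and to action windows of length below $\hbar$, the Floer differential and the continuation maps then cannot interact with $\partial(\nu M)$, and one obtains \emph{purely at the chain level} a well-defined filtered complex $CF^{(a,b)}$, its homology, and the usual continuation and monotonicity maps, with the attendant spectral numbers. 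A Morse--Bott analysis of $H=f(K)$ for small-slope $f$ identifies this homology, in a window around the action of the constants at $M$, with a degree shift of $H_*(M;\Lambda)$; in particular it is nonzero and carries distinguished ``fundamental'' and ``point'' classes with well-defined spectral numbers $c(\mathbf 1_M,H)$ and $c([\mathrm{pt}]_M,H)$.

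Now I would argue by contradiction. Since $dK\ne0$ on $\nu M\setminus M$, every small $\rho>0$ is a regular value, so $X_K$ has no zeros on $K^{-1}(\rho)$; hence a $C^0$-limit of periodic orbits of period $\le T_0$ on levels $K^{-1}(\rho_n)$ with $\rho_n\to\rho$ is again a nonconstant, contractible-in-$\nu M$ periodic orbit of period $\le T_0$. By a compactness argument it therefore suffices to produce such orbits for a dense set of $\rho\in(0,\rho_0)$, where the Floer bookkeeping is cleanest. Fixing $T_0$ above $\sup_{q,j}2\pi/\lambda_j(q)$ and choosing $f$ with $f'$ supported near $\rho$ and sweeping through the resonant slopes $2\pi/\lambda_j(q)$ exactly across $K=\rho$, the only contractible $1$-periodic orbits of $X_{f(K)}$ of small period would be the constants (at $M$ and in the outer shell) together with the oscillator Lyapunov families lying on $K^{-1}(\rho)$. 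If the latter were absent, the chain complex of the local filtered Floer homology would have no generators in an action window pinned to the level $K^{-1}(\rho)$ and forced by the turning $\int_0^\infty f'$; this contradicts the non-vanishing, in precisely that window, coming from the Morse--Bott computation of the previous step (equivalently, it would force $c(\mathbf 1_M,f(K))$ and $c([\mathrm{pt}]_M,f(K))$ both to be separated by a definite amount and to coincide). Hence $K^{-1}(\rho)$ carries a contractible-in-$\nu M$ orbit of period $\le T_0$, and the density reduction finishes the proof; $\rho_0$ is dictated by the size of the normal-form neighborhood and $T_0$ by the oscillator frequencies.

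The step I expect to be the main obstacle is the second one: making the local filtered Floer homology, and especially its continuation and monotonicity properties and the associated spectral numbers, genuinely well-defined and computable \emph{at the chain level} when $\Omega|_M$ is not spherically rational, so that the action spectrum may be dense and the familiar ``isolated critical action'' bookkeeping is unavailable. This demands that the crossing-energy estimate be uniform over the relevant family of Hamiltonians and almost complex structures, that the Morse--Bott perturbations be carried out compatibly with the action filtration, and that the continuation maps be controlled finely enough to license the comparison of spectral numbers in the contradiction argument --- which is exactly the chain-level analysis advertised in the abstract.
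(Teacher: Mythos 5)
Your outline follows the same broad strategy as the paper (reduce to a disc bundle, establish a crossing/radius--energy estimate, build a filtered Floer homology in a small action window, run a comparison/contradiction argument, and pass from a dense set of levels to all levels via Arzel\`a--Ascoli), but it is missing the two ingredients that make the argument actually close in the spherically irrational case.

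\textbf{The period bound.} To run Arzel\`a--Ascoli you need a uniform bound $T_0$ on the periods of the orbits you detect, and your plan is to get it by taking $T_0>\sup_{q,j}2\pi/\lambda_j(q)$ and choosing $f$ with $f'\le T_0$. That does not fit with the detection mechanism. The Floer argument forces an orbit by making the reparametrizer sweep a total amount $\int f'$ of order $\rho$ (so that a nonconstant generator with the correct grading and action must appear in the window), and it must localize the detected orbit in a $\beta$-thin shell around $K^{-1}(\rho)$ with $\beta$ arbitrarily small. If $f'$ is bounded by $T_0$ and supported in a $\beta$-window, then $\int f'\le T_0\beta\to 0$, so the forcing disappears. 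The paper resolves this conflict in a different way: $g'_{\rho,\beta}$ is allowed to be unbounded inside the thin shell, so the detected orbit of $X_K$ has a priori unbounded period; what is controlled instead is its Salamon--Zehnder index, via the grading of the Floer complex (this is the content of Theorem~\ref{main2}), and the period bound is then deduced from the index bound using Proposition~3.2 of~\cite{GG2} -- which is where the Morse--Bott hypothesis on $K$ is essential. This index-to-period step is not present in your proposal and cannot be bypassed by bounding $f'$.

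\textbf{The contradiction argument.} Saying ``if the orbits were absent then the filtered Floer complex would have no generators in the pinned window, contradicting the Morse--Bott computation'' does not work when $\Omega|_M$ is not spherically rational. The whole difficulty is precisely that the window $[c(\rho),d(\rho)]$ \emph{does} contain plenty of generators even when $K^{-1}(\rho)$ carries no orbit: recappings of the constant orbits at critical points of $h$ on the zero section have actions filling a dense subset of $\mathbb{R}$. (The paper points this out explicitly at the start of the proof of Proposition~\ref{p3}.) Consequently, spectral-number bookkeeping of the kind you invoke -- comparing $c(\mathbf 1_M,\cdot)$ and $c([\mathrm{pt}]_M,\cdot)$ -- does not directly yield a contradiction, because the filtered Floer homology is typically nonzero regardless. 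What the paper does instead is a genuinely chain-level argument: it restricts the topology of the Floer trajectories (Lemma~\ref{toprest}/Corollary~\ref{bigcor} -- the cylinders involved cannot change the capping when the action window is short, because $\pi\circ u$ is nearly $J_0$-holomorphic with small energy), and then tracks the specific fiberwise-capped generator $[\gamma_{1,\ell_b}^{-},w_0]$ through the commutative triangle $F_b^{\ep}\to H'\to F_a^{\ep}$, showing it cannot be killed (Propositions~\ref{p1}, \ref{p2}, \ref{p3}). Your proposal has no analogue of the Lemma~\ref{toprest} restriction, which is the engine of the irrational case.

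Two smaller remarks. The role you assign to the hypothesis $c_1(TP)|_{\nu M}$ torsion (``uniform control of $c_1$-weighted areas for the energy estimates'') is not how it enters: the radius--energy estimate (Theorem~\ref{c0}) is independent of $c_1$. The hypothesis is used (i) to give a $\mathbb{Z}$-grading to the local Floer homology in the appendix, which is what makes the $\gamma_{1,\ell_a}^-$ generator homologically nontrivial in Proposition~\ref{p1}, and (ii) in Proposition~3.2 of~\cite{GG2} to convert a Salamon--Zehnder bound into a period bound. Also, the detected 1-periodic orbits of $X_{f(K)}$ are not literally Lyapunov families of the normal Hessian -- for nonzero $\rho$ they are honest orbits of the reparametrized flow on $K^{-1}(\rho')$ for some $\rho'$ in the window, with no a priori relation to $2\pi/\lambda_j(q)$ unless one already has the period/index control.
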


Since $c_1(T(T^*M))=0$, it immediately follows that:
\begin{cor}\label{magnet} If $\sigma\in \Omega^2(M)$ is symplectic, there are $\rho_0, T_0>0$ such that, for every $\rho\in (0,\rho_0)$ the Hamiltonian flow of the function $K(q,p)=\frac{1}{2}|p|^2$ on $(T^*M,\Omega_{\sigma})$ has a periodic orbit on $K^{-1}(\{\rho\})$ which is contractible in $T^*M$ and has period no larger than $T_0$.
\end{cor}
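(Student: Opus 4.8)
Since $c_1(T(T^*M))=0$, Corollary \ref{magnet} is the special case $P=T^*M$, $\Omega=\Omega_\sigma$ of Theorem \ref{main} (the zero section is a closed symplectic submanifold of $(T^*M,\Omega_\sigma)$ precisely when $\sigma$ is symplectic, and $K(q,p)=\frac{1}{2}|p|^2$ attains a Morse--Bott nondegenerate minimum along it), so it is enough to prove Theorem \ref{main}. The first move is to replace the geometry near $M$ by a model. By the Weinstein symplectic neighborhood theorem, after shrinking we may identify $\nu M$ symplectically with a disc subbundle $E_R$ of the symplectic normal bundle $\pi\co E\to M$, carrying a symplectic form restricting to $\Omega|_M$ on the zero section and to the linear symplectic form on each fibre; and Morse--Bott nondegeneracy makes $K$, transported to $E_R$, a fibrewise positive-definite quadratic function up to a $C^2$-small error, so that for small $\rho>0$ the level sets $K^{-1}(\rho)$ are sphere subbundles of $E_R$ and $K$ has no critical point with $0<K\le R$. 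One then constructs \emph{filtered} Floer homology in $E_R$, for loops contractible in $\nu M$, using \emph{admissible} Hamiltonians --- functions of $K$ alone that are locally constant near $M$ and near $\partial E_R$ and monotone in between --- the noncompactness of $E_R$ being handled by a quantitative ``no escape'' maximum principle, which confines Floer cylinders between interior orbits to a slightly smaller disc bundle and thereby makes the complex and its continuation maps well defined. The hypothesis that $c_1(TP)|_{\nu M}$ be torsion enters precisely here: it forces $\langle c_1,A\rangle=0$ for every $A\in\pi_2(\nu M)=\pi_2(M)$, so the Conley--Zehnder index of a contractible orbit is a well-defined integer independent of its capping, and the orbits near $M$ occupy a bounded band of degrees; indeed the ``local Floer homology'' of the Morse--Bott minimum $M$ is $H_{*+s}(M)$ for a fixed shift $s$ determined by the signature of the normal Hessian.

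Periodic orbits of $X_K$ are then extracted by the reparametrization trick, made quantitative at the chain level. For an admissible profile $f\co[0,R]\to\mathbb{R}$, the nonconstant $1$-periodic orbits of $f(K)$ lying over $K^{-1}(\rho)$ are exactly the closed orbits of $X_K$ on $K^{-1}(\rho)$ of period $f'(\rho)$, up to iteration; so if $f$ is taken \emph{linear} with a fixed small slope $T_0$ on a transition window $(\rho_1,\rho_2)\subset(0,\rho_0)$ and flat elsewhere, a single admissible Hamiltonian $H$ detects closed $X_K$-orbits of period $T_0$ on every level in that window at once. Arguing by contradiction, suppose some $\rho^*\in(0,\rho_0)$ carries no closed $X_K$-orbit of period $\le T_0$ and choose the window around $\rho^*$. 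Then in the action window and degree band which the reparametrization formula assigns to level $\rho^*$, $H$ (and the family interpolating it with a $C^2$-small Hamiltonian) has no $1$-periodic orbits at all, so the corresponding subquotient of the Floer complex vanishes. On the other hand a continuation argument from the $C^2$-small Hamiltonian, together with crossing-energy lower bounds showing that the only Floer differentials touching the ``$M$ part'' come from the $M$-orbits themselves, identifies the Floer homology in that window and degree band with the local Floer homology $H_{*+s}(M)\ne 0$ of the minimum --- a contradiction. Since $\rho^*\in(0,\rho_0)$ was arbitrary, every small level carries a closed $X_K$-orbit of period at most $T_0$; and because the Floer-homological conclusion is insensitive to the $C^2$-small discrepancy between $K$ and its fibrewise-quadratic model, this orbit belongs to the \emph{given} $K$, lies in $\nu M$, and is contractible there by construction.

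The step I expect to be the main obstacle --- and the one forcing a departure from Ginzburg and G\"urel, who assumed $\Omega|_M$ spherically rational --- is carrying out the chain-level bookkeeping of the previous paragraph without any rationality. When the group of spherical periods of $\Omega|_M$ is not discrete the action spectrum of $H$ can be dense, so orbits cannot be separated cleanly by action, the filtered groups $HF^{(a,b)}$ are delicate, and the local Floer homology of $M$ together with the splitting of the complex must be produced by a careful analysis at the level of chains rather than of homology. The remedy is to eschew generic perturbations: one works with the explicit fibrewise-quadratic model and a well-chosen almost complex structure, computes the part of the Floer complex near $M$ directly from a Morse--Bott model for the constant orbits along $M$, and excludes every stray trajectory that could spoil the splitting by the quantitative no-escape and crossing-energy estimates. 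Establishing that those estimates hold uniformly over the relevant family of profiles, and that the chain-level structure so obtained is stable under the passage from the model back to the actual Hamiltonian $K$, is where the real difficulty lies.
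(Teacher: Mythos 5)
The opening observation --- that $c_1(T(T^*M))=0$, that the zero section is symplectic in $(T^*M,\Omega_\sigma)$ exactly when $\sigma$ is symplectic, and that $K=\frac12|p|^2$ has a Morse--Bott nondegenerate minimum along it --- is precisely the paper's proof of Corollary \ref{magnet}, which is stated as an immediate consequence of Theorem \ref{main}. You then go on to sketch Theorem \ref{main} itself; the broad strokes (Weinstein normal form, filtered Floer homology in the disc bundle controlled by a no-escape/radius-energy estimate, reparametrized profiles of $K$, local Morse--Bott Floer computation near the zero section, and chain-level bookkeeping to avoid the spherical-rationality hypothesis) match the paper's architecture.

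The gap is in the mechanism you propose for the period bound $T_0$. You suggest taking the reparametrizing profile $f$ to be linear with fixed small slope $T_0$ on a transition window $(\rho_1,\rho_2)$ and flat outside it, so that the detected $1$-periodic orbit of $f(K)$ directly corresponds to an $X_K$-orbit of period $T_0$. This is incompatible with the rest of the scheme. For the Floer-theoretic detection to produce anything, the profile must climb by roughly a fixed multiple of $\rho$ across the window (in the paper's notation from near $0$ up to $C(\rho)=30\pi\rho/C_1$), while the conclusion for an \emph{individual} level $\rho^*$ is obtained only after shrinking the window width $\beta$ to $0$ and applying Arzel\`a--Ascoli. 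Together these force the slope on the window to be of order $C(\rho)/\beta$, which diverges as $\beta\to 0$; there is no single admissible profile that both drives the Floer-homological nonvanishing and has bounded slope on an arbitrarily thin window. Accordingly, the paper's Theorem \ref{main2} makes no period claim at all at the Floer stage: it produces an orbit $\gamma$ of some (a priori uncontrolled) period $\tau$ whose Salamon--Zehnder index $\Delta([\gamma,w],K)$ lies in $[-2r,2m+1]$, a bound coming from working in Floer degree $2r$. The period bound is then derived from a genuinely separate ingredient, Proposition 3.2 of \cite{GG2}, which uses the Morse--Bott nondegeneracy of $K$ along $M$ and the vanishing of $c_1|_{\nu M}$ in real cohomology to give $T_\gamma\le\frac1a\left(-\Delta([\gamma,w],K)+c\right)$, hence $T_0=\frac1a(2r+c)$. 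Your sketch invokes the $c_1$-torsion hypothesis only to fix the grading of capped orbits; its second and, for the bounded-period statement, indispensable role is exactly this index--period inequality. Without it, or some replacement, the proposed argument yields orbits on a dense set of low levels but no period control, and hence no orbits on \emph{every} level.
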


The restriction to sufficiently small energy levels is necessary: as is explained for instance in \cite{Gsurv}, already in the case where $M$ is a hyperbolic surface with area form given by $\sigma$, there is an energy level $c_0$ such that the magnetic flow has no contractible periodic orbits of energy larger than $c_0$, and no periodic orbits at all on the energy level $c_0$.

Theorem \ref{main} was proven in \cite{GG2} in the special case that $\Omega|_M$ is spherically rational (\emph{i.e.}, in the case that $\{\int_{S^2}u^*\Omega|u\co S^2\to M\}$ is a discrete subgroup of $\mathbb{R}$); in broad outline, our approach is similar to that of \cite{GG2}.  An important ingredient in the proof of the corresponding theorem in \cite{GG2} is a result (Propositions 3.1 and 3.2 of \cite{GG2}) which, in certain situations which include the case where $c_1(TP)|_{\nu M}$ is torsion, allows one to use grading information in Floer homology to bound the period of a periodic orbit.  As such, in order to obtain existence of periodic orbits on all sufficiently low energy levels, in these situations it is enough to find periodic orbits on a dense set of sufficiently low energy levels corresponding to a fixed Floer homological grading, since the periodic orbits so obtained then have bounded period and so the Arzel\`a-Ascoli theorem can be used to obtain periodic orbits
  on all low energy levels, with bounds on their period.  

Given this ingredient from \cite{GG2}, the main contribution of the present paper to the proof of Theorem \ref{main} is the following.
\begin{theorem}\label{main2}Let $M$ be a closed connected symplectic submanifold of the symplectic manifold $(P,\Omega)$, and let $\nu M$ be a tubular neighborhood of $M$.  Suppose that $K\co P\to [0,\infty)$ is a smooth function with $K^{-1}(\{0\})=M$, such that $K$ attains a Morse--Bott nondegenerate minimum along $M$.  Suppose also that either \begin{itemize} \item[(i)] $c_1(TP)|_{\nu M}$ represents a torsion class in $H^2(\nu M;\mathbb{Z})$, or
\item[(ii)] There is a Morse function on $M$ having no critical points of index $1$.\end{itemize}
Then there is $\rho_0$ with the following property.  If $0<\rho<\rho_0$ then for any sufficiently small $\ep>0$ the Hamiltonian flow of $K$ has a periodic orbit $\gamma\co \mathbb{R}/\tau\mathbb{Z}\to P$ of some period $\tau>0$, whose image is contained in $\nu M\cap K^{-1}((\rho-\ep,\rho+\ep))$ and is contractible in $\nu M$.  Furthermore,  there is a disc $w\co D^2\to \nu M$ having $w(e^{2\pi i t/\tau})=\gamma(t)$ $(0\leq t\leq \tau$) such that $[\gamma,w]$ has Salamon--Zehnder index $\Delta([\gamma,w],K)$ which satisfies $-2r\leq \Delta([\gamma,w],K)\leq 2m+1$. 
\end{theorem}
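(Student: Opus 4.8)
The plan is to detect the required periodic orbits using a filtered Floer homology for Hamiltonians supported in $\nu M$ and built out of $K$ itself, running the Ginzburg--G\"urel strategy of \cite{GG2} but at the chain level so as to dispense with the spherical rationality of $\Omega|_M$ on which their use of the Novikov ring depends. First I would apply Weinstein's symplectic neighborhood theorem to identify $\nu M$ with a disc subbundle $E_{R_0}$ of the symplectic normal bundle $\pi\co E\to M$, equipped with a fibration-compatible almost complex structure and metric. Under this identification the Morse--Bott hypothesis says that along the zero section $K$ agrees to second order with a fiberwise positive-definite quadratic form, so that for all small $\rho$ the level $K^{-1}(\rho)$ is a smooth sphere bundle over $M$, the closed characteristics on it have periods bounded above and below independently of $\rho$, and the transverse linearized flow along $M$ is elliptic, rotating in the $r$ complex normal directions.

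For the detection itself I would use Hamiltonians $H_\lambda=f_\lambda\circ K$, where $f_\lambda\co[0,K_{\max}]\to\mathbb R$ is monotone, $C^2$-small near $0$, attains slopes up to $\lambda$ over the window $(\rho-\ep,\rho+\ep)$, and is constant near $K_{\max}$, arranged so that the only non-constant $1$-periodic orbits of $X_{H_\lambda}$ occur over levels in $(\rho-\ep,\rho+\ep)$ (with transverse rotation near $M$ made slow by the small slope of $f_\lambda$ at $0$). The first technical point is a no-escape lemma: because the fiberwise radial coordinate is $J$-convex near $\partial E_{R_0}$ and $f_\lambda$ is increasing, Floer cylinders connecting $1$-periodic orbits in the interior of $E_{R_0}$ remain in the interior, so that a filtered Floer homology $HF^{[a,b)}_*(H_\lambda)$ is well defined for action windows $[a,b)$ disjoint from the discrete set of critical actions and is independent of the auxiliary data. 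This is where the argument departs from \cite{GG2}: since $\Omega|_M$ need not be spherically rational one cannot pass to a Novikov completion, so everything must be kept inside a fixed bounded action window, and the homology---together with the distinguished class used below---has to be built and compared at the chain level.

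The constant orbits at $M$ form a Morse--Bott family $\cong M$ at action $0$; perturbing by a Morse function $g$ on $M$---chosen to have no critical points of index $1$ under hypothesis (ii)---replaces them by nondegenerate orbits whose Salamon--Zehnder indices, when $f_\lambda$ has small slope at $0$, occupy an interval of length $2m$ fixed by the indices of $g$ and by the shift coming from the slow normal rotation; identifying this bottom part of the Floer complex with the Morse complex of $(M,g)$ yields a distinguished nonzero class $\alpha$ in this grading range (the no-index-$1$ condition being what ensures $\alpha$ is a cycle in the case where $c_1(TP)|_{\nu M}$ is not torsion and gradings are only partially defined). Letting $\lambda\to\infty$, as $\lambda$ crosses the periods of the closed characteristics on nearby levels new $1$-periodic orbits of $X_{H_\lambda}$ appear over $(\rho-\ep,\rho+\ep)$, and the action-nonincreasing continuation map carries $\alpha$ into an action window which, for $\lambda$ large, can only be supported on these new orbits. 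Following $\alpha$ at the chain level---where the no-escape lemma and the action filtration prevent it from becoming a boundary---forces the Floer complex of $H_\lambda$ to possess a nonzero generator in that window, i.e.\ a $1$-periodic orbit of $X_{H_\lambda}$ over some $c\in(\rho-\ep,\rho+\ep)$; reparametrizing produces the periodic orbit $\gamma$ of $X_K$ on $K^{-1}(c)$, which lies in and is contractible in $\nu M$. Its Salamon--Zehnder index is that of $\alpha$ plus the winding accumulated along the continuation, which by construction of $f_\lambda$ amounts to a single turn around the characteristic, and this places $\Delta([\gamma,w],K)$ in $[-2r,2m+1]$.

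The main obstacle is precisely the construction and chain-level bookkeeping of this filtered Floer homology on a symplectic manifold with boundary in the absence of any rationality: one must show the distinguished chain survives inside a bounded action window without invoking finiteness of a Novikov ring or globally defined $\mathbb Z$-gradings, and one must pin down the index shift of the Morse--Bott minimum and the winding produced by the continuation accurately enough to land in $[-2r,2m+1]$ rather than in some unspecified bounded interval. The surviving analytic issues---compactness and transversality for the Floer equation in $E_{R_0}$, and exclusion of bubbling via asphericity of small discs together with the hypothesis on $c_1(TP)|_{\nu M}$---are comparatively routine.
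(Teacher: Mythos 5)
Your overall architecture matches the paper's: reduce via Weinstein to a disc bundle $E(R)$, build a filtered Floer homology there, and run a chain-level version of the Ginzburg--G\"urel detection argument so that the Novikov completion is never invoked. But two steps that you treat as routine are in fact the places where the paper's argument lives, and as stated both contain gaps.

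First, your ``no-escape lemma'' asserts that Floer cylinders stay in the interior because the fiberwise radial coordinate is $J$-convex and $f_\lambda$ is increasing. For the symplectic form $\omega=\pi^*\omega_0+d\theta$ on a disc bundle with curving connection, the radial coordinate is \emph{not} plurisubharmonic in general (the horizontal term $\langle -iF_A x,x\rangle$ from item (iii) of Section 2 spoils it), and $\partial E(R)$ is not of contact type because $\pi^*\omega_0$ is not exact. The paper explicitly notes that the only existing compactness results of this kind without a maximum principle require a negative-curvature hypothesis on $E$. What replaces it is the radius--energy estimate of Theorem~\ref{c0}, which is proven only for Hamiltonians of the specific shape (\ref{hform}) --- equal to $2\pi L$ (plus a small $\delta h\circ\pi$) outside a small disc bundle $E(\alpha)$ --- and which says a low-energy Floer trajectory stays near the zero section. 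Your Hamiltonians $f_\lambda\circ K$ do not have this form (they are functions of $K$, not of $L$), and as $\lambda\to\infty$ they are not uniformly $C^0$-bounded, so the estimate would not apply to them directly even if you switched to it.

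Second, the step ``the continuation map carries $\alpha$ into an action window which can only be supported on the new orbits'' is exactly where spherical irrationality bites, and your sketch does not address it. When the image of $[\omega]$ on $\pi_2$ is dense, the constant orbits of $H_\lambda$ at critical points of the Morse perturbation, equipped with nontrivial cappings, have actions filling a dense subset of $\mathbb{R}$; they never leave your action window, so the window alone cannot isolate the new orbits. The paper circumvents this by (a) proving a topological restriction on low-energy Floer cylinders (Lemma~\ref{toprest} and Corollary~\ref{bigcor}): for Hamiltonians whose vector fields are nearly vertical, such cylinders project to almost-holomorphic maps to $M$ of small area, forcing the capping class $A$ to be preserved; this is what singles out the fiberwise-capped generators $[\gamma_{1,\ell}^-,w_0]$ and makes them behave like a rank-one summand; (b) a local Morse--Bott Floer computation (Appendix and Proposition~\ref{p1}) showing these generators are cycles and never boundaries; and (c) a fixed ``sandwich'' $F_b^\ep\le H'\le F_a^\ep$ of $\delta h$-Hamiltonians (not a one-parameter family tending to infinity) combined with a perturbation-of-the-identity inversion (Proposition~\ref{ipluslower} and the proof of Proposition~\ref{p3}) to derive the contradiction. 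Your ``follow $\alpha$ at the chain level'' phrase gestures at this but does not supply the mechanism, and your claim that the index shift is ``a single turn around the characteristic'' does not by itself pin $\Delta$ into $[-2r,2m+1]$; the paper gets this range from working in the single Floer grading $2r$ together with the standard inequality relating $\Delta$ and the Conley--Zehnder index.
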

The notation $\Delta([\gamma,w],K)$ should be understood as follows.  Choose a symplectic trivialization of $w^*TP$ over $D^2$, and for $0\leq t\leq \tau$ let $\phi_{K}^{t}$ denote the time-$t$ flow of the Hamiltonian vector field $X_K$.  The linearizations at $\gamma(0)$ of $\phi_{K}^{t}$ are then represented with respect to our trivialization by a path $\{\Psi(t)\}_{0\leq t\leq \tau}$ of symplectic matrices, and $\Delta([\gamma,w],K)$ is the Salamon--Zehnder index (which was originally defined in \cite{SZ}, where it is denoted $\Delta_{\tau}(\Psi)$ and referred to as the ``mean winding number'') of this path $\Psi$.  Various relevant properties of $\Delta$ are reviewed in Section 2 of \cite{GG2}.
We should point out that since we define $X_K$ by $\iota_{X_K}\omega=dK$ instead of $\iota_{X_K}\omega=-dK$ as is done in \cite{GG2}, our Hamiltonian vector fields have periodic orbits which are related to those in \cite{GG2} by time reversal, as a result of which the Salamon--Zehnder indices $\Delta([\gamma,w],K)$ of these orbits have opposite sign.

  Except for the statement about the  Salamon--Zehnder index, Theorem \ref{main2} is weaker than the previously-mentioned results of \cite{L},\cite{M}.  However, as we shall shortly see, the information about the Salamon--Zehnder indices enables one to deduce Theorem \ref{main} from Theorem \ref{main2} in the situation of Case (i) above.  We include Case (ii) in Theorem \ref{main2}  because it illustrates the broad applicability of our method and its proof requires only a brief digression (in the proof of Proposition \ref{p1}) from the proof of Case (i).  Note that the condition that $M$ admit a Morse function with no index-one critical points obviously implies that $\pi_1(M)=0$, and in fact if $\dim M\neq 4$ this condition is equivalent to requiring that $\pi_1(M)=0$ (see Theorem 8.1 of \cite{Mil}).  Whether or not this remains true when $\dim M=4$ is an open question as of this writing.

Incidentally, the Morse--Bott assumption on $K$ plays a fairly modest role in the proof of Theorem \ref{main2}; it facilitates somewhat the construction of the functions $f_a,f_b$ of Section 4, but functions with the same essential properties could be constructed with some additional effort for many other classes of $K$.  However, the Morse--Bott assumption is vital in the proof that Theorem \ref{main2} implies Theorem \ref{main}, due to its use in Proposition 3.2 of \cite{GG2}.

\begin{proof}[Proof of Theorem \ref{main}, assuming Theorem \ref{main2}] Since $c_1(TP)|_{\nu M}$ is torsion, $c_1(TP)$ is represented in real cohomology by a $2$-form which vanishes throughout $\nu M$.  Let $\gamma$ be any of the  periodic orbits produced by Theorem \ref{main2}, and denote its period by $T_{\gamma}$. Then since the disc $w$ is contained in $\nu M$, Proposition 3.2 of \cite{GG2} gives that (possibly after shrinking $\rho_0$, and taking into account that our differing conventions on the signs of Hamiltonian vector fields results in a sign reversal for the Salamon--Zehnder index $\Delta([\gamma,w],K)$), there are $\gamma$-independent constants $a,c>0$ with\[  T_{\gamma}\leq\frac{1}{a}(-\Delta([\gamma,w],K)+c).\]  Thus given $\rho< \rho_0$, where $T_0=\frac{1}{a}(2r+c)$ it follows that for each $\ep\in (0,\rho_0-\rho)$ $X_K$ has a contractible-in-$\nu M$ periodic orbit of period at most $T_0$ in $K^{-1}((\rho-\ep,\rho+\ep))$.  The Arzel\`a-Ascoli theorem applied to the orbits so obtained from a sequence $\ep_k\searrow 0$ then shows that $X_K$ has a contractible-in-$\nu M$ periodic orbit of period at most $T_0$ in $K^{-1}(\{\rho\})$.\end{proof}

Now Theorems \ref{main} and \ref{main2} depend only on the behavior of $K$   in a (sufficiently small) tubular neighborhood $\nu M$ of $M$. The Weinstein neighborhood theorem (see, \emph{e.g.}, Theorem 3.30 of \cite{MS0}) asserts that if $(P_1,\Omega_1)$ and $(P_2,\Omega_2)$ both contain $M$ as a symplectic submanifold, with $\Omega_1|_M=\Omega_2|_M$ and with the symplectic normal bundles to $M$ in $P_1$ and $P_2$ isomorphic as symplectic vector bundles, then there are tubular neighborhoods $\nu_i \subset P_i$ of $M$ in $P_i$ ($i=1,2$) which are symplectomorphic by a symplectomorphism restricting to $M$ as the identity.  Where $E\to M$ denotes an arbitrary sympelctic vector bundle over $M$ (which of course necessarily admits complex vector bundle structure and a compatible  Hermitian metric),  at the beginning of Section 2 we shall, for suitably small $R>0$, equip the radius $R$ disc bundle $E(R)$ with a symplectic form $\omega$ which restricts to the zero section $M$ as an arbitrary given symplectic form $\omega_0$.  Given $(P,\Omega)$ as in the statement of Theorem \ref{main2}, use for $E$ the symplectic normal bundle to $M$ in $P$ and use for $\omega_0$ the restriction $\Omega|_M$.  A tubular neighborhood of $M$ in $P$ is then symplectomorphic to a tubular neighborhood of $M$  in $E(R)$, and the Morse-Bott condition for $K$ is obviously preserved via this symplectomorphism, so to prove Theorem \ref{main2} it is enough to prove it when $(P,\Omega)$ is the symplectic disc bundle $(E(R),\omega)$ and $M$ is the zero-section.  The rest of the paper is devoted to this latter task.

The proof uses a version of (filtered) Floer homology for the disc bundle $E(R)$.  Since $E(R)$ is not closed, there are subtleties involved in the definition this Floer homology, because of a need for compactness results.  As far as we know, the only case in which such Floer homology groups have been constructed in the literature without any constraint on the base $M$ is when $E$ satisfies a negative curvature hypothesis (this hypothesis allows one to use a maximum principle to obtain compactness; see \cite{O}); however such a hypothesis is not natural in our context.  In Section 2, we are able to address the compactness problem because the only Floer homology groups we need are groups of the form $HF^{[a,b]}(H)$ where $H$ is a Hamiltonian which behaves in a certain standard way outside a small neighborhood of the zero section $M$, and where the ``action window'' $[a,b]$ is small.  Thus all of the cylinders which one needs to use in the definition of the Floer complex have low energy; we prove a result (Theorem 
 \ref{c0}) showing roughly speaking that, for the Hamiltonians which we consider, a Floer connecting orbit with small energy stays entirely within a small neighborhood of the zero section.  (The main subtlety here is that the Hamiltonians can behave quite wildly very close to the zero section, so we need our constants to depend only on the behavior of the Hamiltonians away from the zero section.)  This enables us to define Floer groups $HF^{[a,b]}(H)$ in a fairly standard way.  Similar constructions are carried out in \cite{GG2} in the spherically rational case when the action interval $[a,b]$ does not intersect the image of $[\omega]$ on $\pi_2(E(R))$; there the compactness results are somewhat easier because in that context one can use certain compactly supported Hamiltonians in place of the Hamiltonians that we use.

Where $H$ is a reparametrized version of the Hamiltonian $K$ that we are interested in and where $H'$ is a small nondegenerate perturbation of $H$, the strategy is then to use a commutative diagram \begin{equation}\label{introdiag} \xymatrix{HF^{[a,b]}_{*}(F_0)\ar[rr]^{\Psi_{F_0}^{F_1}}\ar[dr]_{\Psi_{F_0}^{H'}}& & HF^{[a,b]}_{*}(F_1)\\ & HF^{[a,b]}_{*}(H')\ar[ur]_{\Psi_{H'}^{F_1}} & },\end{equation} where $F_0\leq H'\leq F_1$ and $F_0$ and $F_1$ are Hamiltonians with comparatively easy-to-understand Floer complexes, in order to obtain information about $HF^{[a,b]}(H')$ and hence about the periodic orbits of $H'$.  $F_0$ and $F_1$ are perturbed versions of Hamiltonians which depend only on the distance from the zero section, and Section 3 is concerned with learning about the Floer homologies of such Hamiltonians.  The crucial results in this direction are Lemma \ref{toprest} and its Corollary \ref{bigcor}, which lead to significant topological restrictions on the cylinders that are involved in the boundary operators of the $[a,b]$-Floer complexes of $F_0$ and $F_1$ and in the chain map relating the complexes when $b-a$ is sufficiently small; these topological restrictions are the 
 primary factors that enable us to deal with the case that $\Omega|_M$ is not spherically rational.  In spirit, what makes these restrictions possible is that the Hamiltonian vector fields of the Hamiltonians  considered in Section 3 are very nearly vertical, so that the cylinders $u\co \mathbb{R}\times (\mathbb{R}/\mathbb{Z})\to E(R)$ that are involved in the boundary operators and chain maps have $\pi\circ u$ nearly pseudoholomorphic (where $\pi\co E(R)\to M$ is the disc bundle projection).  Thus requiring $u$ and hence $\pi\circ u$ to have low energy restricts the topology of $\pi\circ u$, and hence also that of $u$ since $\pi$ induces an isomorphism on $\pi_2$.

In  Section 4 we fairly explicitly construct the Hamiltonians denoted above by $F_0$ and $F_1$ (and in Section 4 by $F_{b}^{\ep}$ and $F_{a}^{\ep}$), and then we leverage the results of Section 3 to prove Theorem \ref{main2}.  Here is an algebraic summary of the argument, with the geometry underlying the algebra deferred to Sections 3 and 4.  For a judiciously chosen action interval $[a,b]$ (equal to $[c(\rho),d(\rho)]$ in the notation of Section 4), in grading $2r$ one has, for $i=0,1$, $CF_{2r}^{[a,b]}(F_i)=\mathbb{Z}_2\oplus N_i$, where the $\mathbb{Z}_2$ is generated by a ``fiberwise-capped'' (in the terminology of Section 3) periodic orbit $x_i$, while the (typically infinitely-generated) summand $N_i$ is generated by periodic orbits with a capping other than the fiberwise capping.  There are chain maps $\Phi_{F_0}^{F_1}$, $\Phi_{F_0}^{H'}$, $\Phi_{H'}^{F_1}$, inducing the maps $\Psi$ of the  diagram (\ref{introdiag}).  Further, Corollary \ref{bigcor} implies that $\Phi_{F_0}^{F_1}\co CF_{2r}^{[a,b]}(F_0)\to CF_{2r}^{[a,b]}(F_1)$ maps $N_0$ to $N_1$, while we show in Proposition \ref{p2} that $\Phi_{F_0}^{F_1}(x_0)=x_1$.  Meanwhile, Proposition \ref{p1} shows that $x_0$ and $x_1$ are cycles in their respective complexes, while the spaces of degree-$2r$ boundaries in $CF_{2r}^{[a,b]}(F_i)$ are contained in $N_i$.  We then show in the proof of Proposition \ref{p3} that, if our result were false, one could construct a cycle $c'\in N_0$ with the property that $\Phi_{F_0}^{H'}(c')=\Phi_{F_0}^{H'}(x_0)$.  But then since $\Phi_{F_0}^{F_1}$ induces the same map on homology as $\Phi_{H'}^{F_1}\circ \Phi_{F_0}^{H'}$, $\Phi_{F_0}^{F_1}(x_0-c')$ would need to be a boundary, which cannot be the case since the foregoing implies that  $\Phi_{F_0}^{F_1}(x_0-c')\notin N_1$.  

The appendix is concerned with some facts about local Hamiltonian Morse-Bott Floer homology, which are probably at least similar to results known to experts, and which are needed in the proof of Proposition \ref{p1}.

\subsection*{Acknowledgements} I am grateful to B. G\"urel for helpful conversations.  Most of this work was carried out while I was at Princeton University.

\section{Filtered Floer homology in symplectic disc bundles}

As our input we take: \begin{itemize}\item a closed connected symplectic manifold $(M,\omega_0)$ of (real) dimension $2m$;
\item a Morse function $h\co M\to \mathbb{R}$ having just one local maximum (\emph{i.e.}, just one critical point of index $2m$; Theorem 8.1 of \cite{Mil} shows how to construct such an $h$). In the situation of case (ii) of Theorem \ref{main2} we will also assume that $-h$ has no critical points of index $1$, so that $h$ has no critical points of index $2m-1$;
\item a Hermitian vector bundle $\pi\co E\to M$ with (complex) rank $r$; and \item a unitary connection $A$ on $E$, which in particular provides a splitting $TE=T^{hor}E\oplus T^{vt}E$.  We assume that the connection $A$ is trivial on some neighborhood of the (finitely many) critical points of $h$.\end{itemize}

Where $\langle\cdot,\cdot\rangle$ denotes the Hermitian inner product on $E$, define \[ L\co E\to \mathbb{R}\mbox{ by }L(x)=\frac{1}{4}\langle x, x\rangle,\] and define \[ \theta\in \Omega^1(E) \mbox{ by }\theta_x(v)=-dL(i(v^{vt}))\mbox{ for }v\in T_xE, \] where $v^{vt}$ denotes the vertical part of $v\in T_xE$ as given by the connection $A$.  Routine computations then show that: \begin{itemize} \item[(i)] $d\theta\in \Omega^2(E)$ restricts to each $\mathbb{C}^r$ fiber of $E\to M$ as the standard symplectic form on $\mathbb{C}^r$ given by the imaginary part of the Hermitian metric;
\item[(ii)] If $u\in T^{hor}E$ and $v\in T^{vt}E$ then $d\theta(u,v)=0$; \item[(iii)] If $u,w\in T^{hor}_{x}E$, then \[ (d\theta)_x(u,w)=\langle -i(F_A)_{\pi(x)}(\pi_*u,\pi_*w)x,x\rangle,\] where $F_A\in \Omega^2(M,\mathfrak{u}(E))$ denotes the curvature $2$-form of the Hermitian connection $A$.\end{itemize}

For $R>0$, define \[ E(R)=\{x\in E| \langle x,x\rangle\leq R^2\},\] and, for $0<R_1<R_2$, \[ E(R_1;R_2)=\overline{E(R_2)\setminus E(R_1)}.\]  Let $J_0$ be an $\omega_0$-compatible almost complex structure on $M$; this determines a metric $g_0$ on $M$ by $g_0(v,w)=\omega_0(v,J_0 w)$, in particular allowing us to measure the $L^{\infty}$ norm $\|F_A\|_{\infty}$ of the $\mathfrak{u}(E)$-valued $2$-form $F_A$ on $M$.  (iii) above then implies that if $\|F_A\|_{\infty}\langle x,x\rangle<1$,  $\pi^*\omega_0+d\theta$ restricts nondegenerately to $T^{hor}_{x}E$.  Thus provided that $R^2\leq \frac{1}{2}\|F_A\|_{\infty}^{-1}$, $\omega:=\pi^{*}\omega_0+d\theta$ defines a symplectic form on $E(R)$, which tames the almost complex structure $\bar{J}$ on $E$ obtained by lifting $J_0$ to $T^{hor}E$ and using the complex vector bundle structure on $T^{vt}E$.  Choose an $R$ such that this is the case.  Below, unless otherwise noted, we always measure distances in $E(R)$ using the metric $g$ on $E(R)$ given by $g(v,w)=\frac{1}{2}(\omega(v,\bar{J}w)+\omega(w,\bar{J}v))$;  by decreasing $R$ if necessary, for convenience let us also assume that the $g$-distance between any two points in the same fiber of $E(R)$ is equal to their distance as considered within their common fiber using the Hermitian metric on the fiber.

We wish to construct a version of filtered Floer homology for Hamiltonians of a particular kind on $E(R)$.  Let $\widetilde{\mathcal{L}}$ be the space of equivalence classes $[\gamma,w]$ where $\gamma\co \mathbb{R}/\mathbb{Z}\to E(R)$ is a contractible loop; $w\co D^2\to E(R)$ satisfies (identifying $\mathbb{R}/\mathbb{Z}$ with $D^2$ by $t\mapsto e^{2\pi it}$) $w|_{\partial D^2}=\gamma$; and $[\gamma,w]$ is deemed equivalent to $[\gamma,w']$ if and only if the classes $[\omega], c_1(TE(R))\in H^2(E(R);\mathbb{Z})$ both evaluate trivially on the sphere obtained by gluing $w'$ to $w$ orientation-reversingly along their common boundary $\gamma$.  
  Given an arbitrary Hamiltonian $H\co (\mathbb{R}/\mathbb{Z})\times E(R)\to \mathbb{R}$, the action  functional $\mathcal{A}_H\co \widetilde{\mathcal{L}}\to\mathbb{R}$ is then defined by \[ \mathcal{A}_H([\gamma,w])=-\int_{D^2}w^*\omega-\int_{0}^{1}H(t,\gamma(t))dt.\]  The critical points of $\mathcal{A}_H$ are precisely those $[\gamma,w]$ where $\gamma$ is a contractible $1$-periodic orbit of the (time-dependent) Hamiltonian vector field $X_H$, where our sign convention is that $dH=\omega(X_H,\cdot)$.
  For real numbers $a<b$, the filtered Floer complex $CF^{[a,b]}_{*}(H)$ is the free $\mathbb{Z}_2$-module generated by those critical points $[\gamma,w]$ of $\mathcal{A}_H$ having $a\leq \mathcal{A}_H([\gamma,w])\leq b$ ($a$ and $b$ will always be taken finite in this paper, so no Novikov completion is needed).  The Floer boundary operator should enumerate solutions $u\co \mathbb{R}\times (\mathbb{R}/\mathbb{Z})\to E(R)$ to the equation \begin{equation}\label{floereqn} \frac{\partial u}{\partial s}+J_t(u(s,t))\left(\frac{\partial u}{\partial t}-X_H(t,u(s,t))\right)=0\end{equation} which connect two generators of $CF^{[a,b]}(H)$ ($J_t$ is a $t$-parametrized family of almost complex structures, which in practice will be close to the $\bar{J}$ of the previous paragraph).  Of course, since $E(R)$ is a manifold with boundary, compactness of the relevant solution spaces is not obvious, and indeed would not hold for many choices of $H$ and $[a,b]$.  However, compactness does hold 
 for certain choices of $H$ and $[a,b]$ which are suitable for our purposes, as we now set about proving.
\subsection{A radius-energy estimate}  
  Choose a number $\alpha\leq R/4$.  The Hamiltonians that we consider will have the form \begin{equation}\label{hform} H(t,x)=B+2\pi L(x)+f(t,x)+\delta h(\pi(x))\end{equation} where $B$ is a constant; $supp(f)\subset (\mathbb{R}/\mathbb{Z})\times E(\alpha)$;  $\delta$ is a small positive number, which in particular should satisfy $\|\delta h\circ \pi \|_{C^2}\leq \alpha/4$; and as before $L(x)=\frac{1}{4}\langle x,x\rangle$ and $h\co M\to\mathbb{R}$ is our fixed Morse function. 
Thus $H$ has a fairly specific form on $E(\alpha;R)=\overline{E(R)\setminus E(\alpha)}$, but the presence of the term $f$ means that we effectively assume nothing about the behavior of $H$ on the interior of $E(\alpha)$; indeed our intention is to apply our work to Hamiltonians having arbitrarily large derivatives in certain subsets of $E(\alpha)$.   

Note that, under the natural identification of $T_{x}^{vt}E$ with the fiber $E_{\pi(x)}$ of the Hermitian bundle $E\to X$ which contains $x$, the Hamiltonian vector field of $2\pi L$ is given by $X_{2\pi L}(x)=-\pi i x$.  So if $\{\phi_t\}_{t\in [0,1]}$ denotes the Hamiltonian flow of $2\pi L$, we have $\phi_t(x)=e^{-\pi i t}x$, and in particular $\phi_1(x)=-x$.  For $v\in TE$ denote $|v|^2=g(v,v)$.

\begin{lemma}\label{loopbig} If $H$ is as in (\ref{hform}) and if the $C^1$ path $\gamma\co [0,1]\to E(\alpha;R)$ satisfies $\gamma(0)=\gamma(1)$, then \[ \int_{0}^{1}|\dot{\gamma}(t)-X_H(\gamma(t))|^2 dt\geq \left(\frac{7\alpha}{4}\right)^2 \]
\end{lemma}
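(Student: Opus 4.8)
The plan is to reduce the claim to an $L^1$-estimate by Cauchy--Schwarz and then exploit the fact, recalled just above the lemma, that the time-$1$ flow of $2\pi L$ is the fibrewise antipodal map $x\mapsto -x$, which moves every point of $E(\alpha;R)$ a definite distance.

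First I would observe that $X_H$ is extremely simple on $E(\alpha;R)$. Since $\mathrm{supp}(f)\subset(\mathbb R/\mathbb Z)\times E(\alpha)$, for each fixed $t$ the function $f(t,\cdot)$ vanishes on the open set $\{x:\langle x,x\rangle>\alpha^2\}$, hence so does $X_{f(t,\cdot)}$, and by continuity $X_{f(t,\cdot)}\equiv 0$ on the closure $E(\alpha;R)$; the constant $B$ contributes nothing either, so on $E(\alpha;R)$ one has $X_H=X_{2\pi L}+\delta X_{h\circ\pi}$ for every $t$. By Cauchy--Schwarz, $\int_0^1|\dot\gamma-X_H(\gamma)|^2\,dt\ge\big(\int_0^1|\dot\gamma-X_H(\gamma)|\,dt\big)^2$, so it suffices to prove $\int_0^1|\dot\gamma(t)-X_H(\gamma(t))|\,dt\ge \frac{7\alpha}{4}$; and by the triangle inequality this follows once I establish $\int_0^1|\dot\gamma(t)-X_{2\pi L}(\gamma(t))|\,dt\ge 2\alpha$ together with $\delta\sup_{E(\alpha;R)}|X_{h\circ\pi}|\le\frac{\alpha}{4}$. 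The latter is a consequence of the smallness hypotheses on $\delta$: because $d(h\circ\pi)=\pi^*dh$ annihilates vertical vectors, properties (i)--(iii) force $X_{h\circ\pi}$ to be horizontal with $\pi_*X_{h\circ\pi}$ the Hamiltonian vector field of $h$ for the $2$-form $\omega_0+\langle -iF_A(\cdot,\cdot)x,x\rangle$ on $T_{\pi(x)}M$, which is $g_0$-close to $\omega_0$ since $R^2\le\frac12\|F_A\|_\infty^{-1}$; hence $|X_{h\circ\pi}|_g=|\pi_*X_{h\circ\pi}|_{g_0}$ is comparable to $\|dh\|_{C^0}$ and the $C^2$-bound on $\delta h\circ\pi$ does the rest.

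For the main term I would straighten out the flow of $2\pi L$. Write $m_\phi\co E(R)\to E(R)$ for fibrewise multiplication by $e^{i\phi}$, so the time-$t$ flow of $2\pi L$ is $m_{-\pi t}$, and set $\beta(t)=m_{\pi t}(\gamma(t))$, a $C^1$ path in $E(R)$ with $\beta(0)=\gamma(0)$ and, since $\gamma(1)=\gamma(0)$, $\beta(1)=-\gamma(0)$. A direct computation using $X_{2\pi L}(x)=-\pi i x$ gives $\dot\beta(t)=Dm_{\pi t}\big(\dot\gamma(t)-X_{2\pi L}(\gamma(t))\big)$. The crucial point is that each $Dm_{\pi t}$ is an isometry of $(TE,g)$: it preserves the horizontal/vertical splitting (parallel transport of a complex connection is $\mathbb C$-linear and hence commutes with the scalar $e^{i\phi}$), it acts on $T^{vt}_xE\cong E_{\pi(x)}$ as the unitary map $e^{i\phi}$, and on $T^{hor}E$ it covers the identity on $M$ while the curvature correction to $g|_{T^{hor}E}$ coming from property (iii) involves the point $x$ only through expressions $\langle Tx,x\rangle$, which are unchanged under $x\mapsto e^{i\phi}x$; there are no horizontal--vertical cross terms in $g$ by property (ii). Therefore $|\dot\beta(t)|=|\dot\gamma(t)-X_{2\pi L}(\gamma(t))|$, so $\int_0^1|\dot\gamma-X_{2\pi L}(\gamma)|\,dt=\mathrm{length}_g(\beta)\ge d_g(\beta(0),\beta(1))=d_g(\gamma(0),-\gamma(0))$. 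Since $\gamma(0)$ and $-\gamma(0)$ lie in a common fibre, the normalization of $g$ makes this distance equal to $2|\gamma(0)|\ge 2\alpha$, using $\gamma(0)\in E(\alpha;R)$. Combining with the previous paragraph yields $\int_0^1|\dot\gamma-X_H(\gamma)|\,dt\ge 2\alpha-\frac{\alpha}{4}=\frac{7\alpha}{4}$, and squaring completes the proof.

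I expect the one step requiring genuine care to be the verification that $Dm_{\pi t}$ is a $g$-isometry — specifically, that the curvature term entering the horizontal part of $g$ is insensitive to replacing $x$ by $e^{i\phi}x$, and that $g$ really does split orthogonally into a unitarily-invariant vertical factor and the (corrected) pullback of $g_0$. The support argument giving $X_f\equiv 0$ on $E(\alpha;R)$ is elementary but should also be spelled out, since the Hamiltonians under consideration are deliberately allowed to behave wildly on the interior of $E(\alpha)$; everything else is routine bookkeeping.
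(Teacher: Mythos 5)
Your argument is essentially the paper's: you pass to the untwisted path $\beta(t)=e^{\pi i t}\gamma(t)$ (the paper's $\eta$), observe that its length bounds $\int|\dot\gamma-X_{2\pi L}|$ from below, use the $2\alpha$ fiberwise-antipodal distance, and then control $X_H-X_{2\pi L}$ by $\alpha/4$ before applying Cauchy--Schwarz. The only difference is presentational: you make explicit the check that fibrewise multiplication by $e^{i\phi}$ is a $g$-isometry, a point the paper passes over tacitly when it equates $|\dot\eta(t)|$ with $|\dot\gamma(t)-X_{2\pi L}(\gamma(t))|$.
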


\begin{proof}  Where $\{\phi_t\}_{t\in [0,1]}$ denotes the Hamiltonian flow of $2\pi L$, let $\eta(t)=(\phi_{t})^{-1}(\gamma(t))$, \emph{i.e.}, $\eta(t)=e^{\pi i t}\gamma(t)$.  Then \[ \dot{\eta}(t)=\pi ie^{\pi i t}\gamma(t)+ e^{\pi i t}\dot{\gamma}(t)=e^{\pi i t}(-X_{2\pi L}(\gamma(t))+\dot{\gamma}(t)),\] and so \[ |\dot{\gamma}(t)-X_{2\pi L}|=|\dot{\eta}(t)|.\]  Now since by hypothesis $\gamma(0)=\gamma(1)\in E(\alpha;R)$ we have $\eta(0)=-\eta(1)$ and $\langle \eta(0),\eta(0)\rangle\geq \alpha^2$, so that $\eta$ is a path of length at least $2\alpha$.  So, using the Schwarz inequality, we get \begin{align*} &\left(\int_{0}^{1}|\dot{\gamma}(t)-X_H(\gamma(t))|^{2}dt\right)^{1/2}\geq \int_{0}^{1}|\dot{\gamma}(t)-X_H(\gamma(t))|dt\\&\geq \int_{0}^{1}|\dot{\gamma}(t)-X_{2\pi L}(\gamma(t))|-\int_{0}^{1}|X_{2\pi L}(\gamma(t))-X_H(\gamma(t))|dt\geq 7\alpha/4,\end{align*} since the hypothesis on $H$ and the assumption that $\gamma$ remains outside $E(\alpha)^{\circ}$ imply that $|X_{2\pi 
 L}(\gamma(t))-X_H(\gamma(t))|\leq \alpha/4$.

\end{proof}

\begin{lemma}\label{fast} If $H$ is as in (\ref{hform}) and if the $C^1$ path $\gamma\co [0,1]\to E(R)$ has image which intersects both $E(\alpha)$ and $E(2\alpha;R)$ then 
\[ \int_{0}^{1}|\dot{\gamma}(t)-X_H(\gamma(t))|^2 dt\geq \left(\frac{3\alpha}{4}\right)^2\]

\end{lemma}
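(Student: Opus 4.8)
The plan is to adapt the proof of Lemma~\ref{loopbig}. The one genuinely new feature is that $\gamma$ is now allowed to enter the region $E(\alpha)$, where the term $f$ in $H$ — and with it the vector field $X_H$ — is entirely uncontrolled; so the first move is to pass to a sub-arc of $\gamma$ which runs from the sphere of radius $\alpha$ to the sphere of radius $2\alpha$ and stays throughout in $E(\alpha;2\alpha)$, which lies outside $E(\alpha)^{\circ}$ and so (because $supp(f)\subset(\mathbb{R}/\mathbb{Z})\times E(\alpha)$) sees only a mild perturbation of $X_{2\pi L}$.

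In detail: by hypothesis there are $t_0,t_1\in[0,1]$ with $\gamma(t_0)\in E(\alpha)$ and $\gamma(t_1)\in E(2\alpha;R)$, and after possibly reversing the orientation of the path I may assume $t_0<t_1$. Writing $\rho(t)=\sqrt{\langle\gamma(t),\gamma(t)\rangle}$, so that $\rho(t_0)\le\alpha$ and $\rho(t_1)\ge 2\alpha$, I would let $b$ be the least $t\in[t_0,t_1]$ with $\rho(t)=2\alpha$ and $a$ the greatest $t\in[t_0,b]$ with $\rho(t)=\alpha$. The intermediate value theorem guarantees that these exist, that $t_0\le a<b\le t_1$, and that $\alpha\le\rho(t)\le 2\alpha$ for all $t\in[a,b]$, whence $\gamma([a,b])\subset E(\alpha;2\alpha)$ lies outside $E(\alpha)^{\circ}$. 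The estimate carried out at the end of the proof of Lemma~\ref{loopbig} then applies verbatim on $[a,b]$: there $X_H=X_{2\pi L}+\delta X_{h\circ\pi}$ and $|X_H(\gamma(t))-X_{2\pi L}(\gamma(t))|\le\alpha/4$.

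It remains to bound the radial speed $|\dot\rho|$ on $[a,b]$ by $|\dot\gamma-X_H|$. A short computation — using that $L=\frac14\langle\cdot,\cdot\rangle$ is preserved by horizontal parallel transport (as $A$ is unitary), that $T^{hor}E\perp_g T^{vt}E$ (property (ii) of $d\theta$ and the construction of $\bar J$), and that $g$ restricts to each fiber as the metric of the Hermitian inner product (the normalization fixed at the start of Section~2) — shows $\nabla^g L(x)=\frac12 x$, hence $|\nabla^g\rho|_g=1$ away from the zero section. Moreover $\nabla^g\rho$ is $g$-orthogonal to $X_{2\pi L}$, since the flow of $2\pi L$ is fiberwise rotation and therefore preserves $\rho$. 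Thus on $[a,b]$, where $\rho>0$,
\[ \dot\rho(t)=\langle\nabla^g\rho,\dot\gamma(t)\rangle_g=\langle\nabla^g\rho,\dot\gamma(t)-X_{2\pi L}(\gamma(t))\rangle_g,\]
so $|\dot\rho(t)|\le|\dot\gamma(t)-X_{2\pi L}(\gamma(t))|\le|\dot\gamma(t)-X_H(\gamma(t))|+\alpha/4$. Integrating over $[a,b]$ and using $\rho(b)-\rho(a)=\alpha$ and $b-a\le 1$ gives $\int_a^b|\dot\gamma-X_H|\,dt\ge\alpha-\alpha/4=3\alpha/4$, and then the Schwarz inequality on $[0,1]$ yields
\[ \int_0^1|\dot\gamma(t)-X_H(\gamma(t))|^2\,dt\ \ge\ \left(\int_0^1|\dot\gamma-X_H|\,dt\right)^2\ \ge\ \left(\int_a^b|\dot\gamma-X_H|\,dt\right)^2\ \ge\ \left(\frac{3\alpha}{4}\right)^2.\]

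The only step that requires any care is the choice of the sub-interval $[a,b]$: it must avoid $E(\alpha)^{\circ}$ entirely, so that the wild term $f$ genuinely drops out of the estimate, while still realizing the full radial displacement $\alpha$. Everything after that is the same calculation as in Lemma~\ref{loopbig}, with the antipodal displacement $2\alpha$ of that argument replaced here by the radial displacement $\alpha$.
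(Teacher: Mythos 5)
Your argument is correct and follows essentially the same route as the paper: pass to a maximal sub-arc lying in $E(\alpha;2\alpha)$ (hence outside $E(\alpha)^{\circ}$) that crosses from radius $\alpha$ to radius $2\alpha$, bound the rate of radial change by $|\dot\gamma - X_H|$, and finish with Cauchy--Schwarz. The only cosmetic difference is that the paper tracks $L$ and uses that $dL(X_H)\equiv 0$ on $E(\alpha;R)$ together with $|dL|\le\alpha$ on $E(\alpha;2\alpha)$, while you track $\rho=\sqrt{\langle\gamma,\gamma\rangle}$ with $|\nabla^g\rho|=1$ and reintroduce the $\alpha/4$ bound on $|X_H - X_{2\pi L}|$; both give $\int|\dot\gamma - X_H|\,dt\ge 3\alpha/4$.
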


\begin{proof} The hypothesis implies that we can choose $t_0,t_1\in [0,1]$ such that \linebreak[6]$\langle \gamma(t_0),\gamma(t_0)\rangle=\alpha^2$, $\langle \gamma(t_1),\gamma(t_1)\rangle=4\alpha^2$; for simplicity assume $t_0<t_1$.  If necessary by increasing $t_0$ and decreasing $t_1$ we may assume that $\langle\gamma(t),\gamma(t)\rangle\in [\alpha^2,4\alpha^2]$ for all $t\in [t_0,t_1]$.  Now the hypothesis on $H$ implies that (since $\gamma(t)\notin E(\alpha)^{\circ}$ for $t\in [t_0,t_1]$) $dL(X_H(\gamma(t)))=0$ for $t\in [t_0,t_1]$.  So (since $|dL|\leq \frac{1}{2}(2\alpha)=\alpha$ on $E(\alpha;2\alpha)$) \[ \frac{3\alpha^{2}}{4}=\int_{t_0}^{t_1}dL(\dot{\gamma}(t))dt=\int_{t_0}^{t_1}dL(\dot{\gamma}(t)-X_H(\gamma(t)))dt \leq 
\alpha\int_{t_0}^{t_1}|\dot{\gamma}(t)-X_H(\gamma(t))|dt.\]  So the Schwarz inequality gives 
\begin{align*} \int_{0}^{1}|\dot{\gamma}(t)-X_H(\gamma(t))|^2 dt&\geq \left(\int_{0}^{1}|\dot{\gamma}(t)-X_H(\gamma(t))|dt\right)^2\\&\geq \left(\int_{t_0}^{t_1}|\dot{\gamma}(t)-X_H(\gamma(t))|dt\right)^2\geq \left(\frac{3\alpha}{4}\right)^2
\end{align*}

\end{proof}

Now a general $\omega$-tame almost complex structure $J$ on $E(R)$ induces a metric $g_J$ by $g_J(v,w)=\frac{1}{2}(\omega(v,Jw)+\omega(w,Jv))$; write $|v|_{J}^{2}=g_J(v,v)$, so that in our earlier notation 
$|v|=|v|_{\bar{J}}$.  For a $\mathbb{R}/\mathbb{Z}$-parametrized path of $\omega$-tame almost complex structures $J_t$ denote \[ \|J_t\|=\sup_{t\in\mathbb{R}/\mathbb{Z}, v\in TE(R):|v|>0}\{\frac{|v|}{|v|_{J_t}},\frac{|v|_{J_t}}{|v|}\}.\] (We only work with almost complex structures such that this is finite).

\begin{lemma}\label{sik}  There are constants $C$ and $\alpha_0$, depending only on $J_t$ and the function $\delta h\co M\to\mathbb{R}$, with the following property.  Suppose $\alpha\leq \alpha_0$, that $S\subset [0,1]\times(\mathbb{R}/\mathbb{Z})$ is a connected submanifold with boundary and that $u\co S\to E(2\alpha;3\alpha)$ satisfies the Floer equation (\ref{floereqn}) with $H(t,x)=2\pi L(x)+\delta h(\pi(x))$.  Suppose also that, for some $\ep,\ep'\in [0,\alpha/6)$, $u(\partial S)\subset \partial E(2\alpha+\ep;3\alpha-\ep')$, and that $u(\partial S)$ intersects both boundary components of $E(2\alpha+\ep;3\alpha-\ep')$.  Then  \[ Area(S)+\int_{S}\left|\frac{\partial u}{\partial s}\right|^{2}_{J_t}dsdt\geq C\alpha^2.\]
\end{lemma}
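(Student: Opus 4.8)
The plan is to reduce, via the fibrewise untwisting already used in the proof of Lemma~\ref{loopbig}, to a monotonicity/isoperimetric estimate for a curve uniformly close to pseudoholomorphic, and then to exploit the hypothesis that $u(\partial S)$ meets both of the (comparatively far apart) boundary spheres of the thin shell. On $E(2\alpha;3\alpha)$ we have $X_H=X_{2\pi L}+X_{\delta h\circ\pi}$, and the flow $\phi_t(x)=e^{-\pi it}x$ of $X_{2\pi L}$ is a symplectomorphism of $(E(R),\omega)$ which is also a $g$-isometry (it preserves the horizontal/vertical splitting, covers the identity on $M$, and acts on each fibre by a unitary scalar), which preserves every sphere bundle $\{\langle x,x\rangle=c^2\}$, and which satisfies $(\delta h\circ\pi)\circ\phi_t=\delta h\circ\pi$. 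So, setting $\tilde u(s,t)=\phi_t^{-1}(u(s,t))$, the computation in the proof of Lemma~\ref{loopbig} shows that $\tilde u$ solves $\partial_s\tilde u+\tilde J_t(\tilde u)(\partial_t\tilde u-X_{\delta h\circ\pi}(\tilde u))=0$, where $\tilde J_t=(d\phi_t)^{-1}\circ J_t\circ d\phi_t$ is again $\omega$-tame with $\|\tilde J_t\|=\|J_t\|$; and $\tilde u$ still takes values in $E(2\alpha;3\alpha)$, with $\tilde u(\partial S)\subset\partial E(2\alpha+\ep;3\alpha-\ep')$ meeting both components (the norm $\langle\cdot,\cdot\rangle$ is $\phi_t$-invariant). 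The gain is that the new zeroth-order term $X_{\delta h\circ\pi}$ has $C^1$-norm on $E(R)$ bounded by a constant depending only on $\delta h$ and the fixed geometry, \emph{independent of $\alpha$}, whereas $X_{2\pi L}=-\pi ix$, though of $C^0$-size $O(\alpha)$ on the shell, has $C^1$-size of order $1$ and so could not be absorbed as a small perturbation of $\bar\partial$ on balls of radius $\sim\alpha$.

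Both equations share a second crucial feature: the zeroth-order term is tangent to the sphere bundles. Indeed $dL(X_{2\pi L})=0$, while for the Hamiltonian vector field $X_L=-ix/(2\pi)$ of $L$ (which is vertical) one has $dL(X_{\delta h\circ\pi})=\omega(X_L,X_{\delta h\circ\pi})=-d(\delta h\circ\pi)(X_L)=0$ since $\delta h\circ\pi$ is constant on fibres. Writing $r(x)=\langle x,x\rangle^{1/2}$ (which is $1$-Lipschitz for $g$) and using $\partial_t\tilde u=X_{\delta h\circ\pi}(\tilde u)+\tilde J_t\partial_s\tilde u$, it follows that, wherever $\tilde u$ lies in the shell, both $\partial_s(r\circ\tilde u)$ and $\partial_t(r\circ\tilde u)=dr(\tilde J_t\partial_s\tilde u)$ are bounded pointwise by a multiple of $|\partial_s\tilde u|_{\tilde J_t}$ depending only on $\|J_t\|$: every unit of radial displacement of $\tilde u$ must be ``paid for'' by its energy, and cannot be obtained for free by drifting along $X_{\delta h\circ\pi}$.

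Now, $S$ being connected and $\tilde u$ realising the radii $2\alpha+\ep<13\alpha/6$ and $3\alpha-\ep'>17\alpha/6$ on $\partial S$ (as $\ep,\ep'<\alpha/6$), there is a connected component $\Sigma'$ of $\tilde u^{-1}\bigl(\{13\alpha/6\le r\le 17\alpha/6\}\bigr)$ with $\Sigma'$ (hence $\partial\Sigma'$) disjoint from $\partial S$, $\tilde u(\partial\Sigma')\subset\{r=13\alpha/6\}\cup\{r=17\alpha/6\}$, and $\tilde u(\partial\Sigma')$ meeting both of these ``walls''; thus $\tilde u|_{\Sigma'}$ crosses a radial slab of width $2\alpha/3$ that carries its entire boundary. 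To such a sub-curve one applies a monotonicity/isoperimetric estimate for solutions of the pseudoholomorphic-curve equation perturbed by the fixed term $X_{\delta h\circ\pi}$ --- in the spirit of the monotonicity lemmas of Sikorav and Hummel, combined with the $\epsilon$-regularity (mean-value) inequality for the equation, which keeps the energy from concentrating on a thin set and so turns the previous paragraph's observation into a quantitative statement. This yields $c,\alpha_1>0$, depending only on $\sup_t\|J_t\|$, on $\|X_{\delta h\circ\pi}\|_{C^1(E(R))}$ and on the fixed geometry of $(E(R),g)$ (hence only on $J_t$ and $\delta h$), such that for $\alpha\le\alpha_1$ one has $\operatorname{Area}(\tilde u|_{\Sigma'})+\int_{\Sigma'}|\partial_s\tilde u|_{\tilde J_t}^2\,ds\,dt\ge c\alpha^2$. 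Transporting this back along the $g$-isometric symplectomorphism $\phi_t$ --- under which $|\partial_s\tilde u|^2$ is pointwise unchanged and the area density changes by at most $O(\alpha)\,|\partial_s\tilde u|_{\tilde J_t}$, a term absorbed after a further shrinking of the threshold (using $\operatorname{Area}_{\mathrm{flat}}(\Sigma')\le 1$) --- and using $\operatorname{Area}(S)\ge\operatorname{Area}(u|_{\Sigma'})$ and $\int_S|\partial_s u|^2\ge\int_{\Sigma'}|\partial_s u|^2$, one obtains the assertion with $\alpha_0$ the resulting threshold (also taken $\le R/4$) and with $C=c/2$.

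The genuinely substantive point, and hence the main obstacle, is making the slab-crossing estimate quantitative with \emph{$\alpha$-independent} constants. Three features conspire to make this possible: the untwisting removes the sole $\alpha$-dependent feature of the equation; the tangency $dL(X_H)=0$ forces the $\sim\alpha$ radial excursion to be charged to the curve rather than to a cost-free drift; and the fact that the crossed slab sits at distance $\sim\alpha$ from the spheres carrying $u(\partial S)$ excludes the degenerate configuration in which $\tilde u$ is essentially a single orbit of $X_H$, which (being confined to one sphere bundle) could never meet both boundary components. Given these, the estimate reduces to a routine application of monotonicity and $\epsilon$-regularity.
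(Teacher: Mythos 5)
Your overall strategy---reduce to a monotonicity-type estimate for a pseudoholomorphic curve crossing a shell of width $\sim\alpha$, with constants controlled by $J_t$ and $\delta h$---is the right idea, and the observation that $dL(X_H)=0$ so the radial excursion cannot be obtained for free is in the same spirit as Lemma~\ref{fast}. But the proposal defers the one genuinely nontrivial step to a black box (``a monotonicity/isoperimetric estimate for solutions of the pseudoholomorphic-curve equation perturbed by the fixed term $X_{\delta h\circ\pi}$ $\ldots$ routine''), which is precisely the step the paper's proof supplies and which you do not. The device that makes this routine is the \emph{graph trick}: put $\tilde E=[-1,2]\times(\mathbb{R}/\mathbb{Z})\times E(R)$ with the almost complex structure $\tilde J$ built from $J_t$ and $X_H$ (so that $\tilde u(z)=(z,u(z))$ is genuinely $\tilde J$-holomorphic) and the taming form $\tilde\omega=ds\wedge dt-dt\wedge dH+\omega$. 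The crucial algebraic identity $\int_{S'}\tilde u^*\tilde\omega=\int_{S'}ds\wedge dt+\int_{S'}|\partial_s u|_{J_t}^2$ converts the left-hand side of the lemma into the $\tilde\omega$-area of an honest $\tilde J$-holomorphic curve passing through the center of a $g_{\tilde J}$-ball of radius $\eta\in(\alpha/4,\alpha/3)$ and with boundary on the sphere. Sikorav's Proposition 4.3.1(ii) then gives $\geq C'\eta^2\geq(C'/16)\alpha^2$ outright, with $C'$ and the allowable $\alpha_0$ determined by the fixed geometry of $(\tilde E,g_{\tilde J})$, hence only by $J_t$ and $\delta h$. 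Without either the graph trick or a worked-out substitute, your proof has a gap exactly at the point you flag as the main obstacle.

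Two further remarks. First, the untwisting $\tilde u(s,t)=\phi_t^{-1}(u(s,t))$ is correct (and $\phi_t$ is indeed a $g$-isometric symplectomorphism preserving the radius), but it is not actually needed: your stated reason for doing it---that $X_{2\pi L}$ has $C^1$-size $O(1)$ and so ``could not be absorbed as a small perturbation of $\bar\partial$ on balls of radius $\sim\alpha$''---presupposes an approach of perturbing off $\bar\partial$ that the paper never takes. Since the graph trick treats all of $X_H$ exactly by building it into $\tilde J$ and $\tilde\omega$, the $C^1$-size of $X_{2\pi L}$ plays no role, and you would face the same graph-trick step with $X_{\delta h\circ\pi}$ alone anyway. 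Second, there is a bookkeeping confusion in the final paragraph: $\operatorname{Area}(S)$ in the lemma is the flat Lebesgue measure of $S\subset[0,1]\times(\mathbb{R}/\mathbb{Z})$, which is unchanged under the untwisting (the domain is the same), whereas you discuss an ``area density'' changing by $O(\alpha)|\partial_s\tilde u|_{\tilde J_t}$, which suggests you are comparing Riemannian areas of images; the latter comparison is not the one the lemma requires, and as written the transport-back step does not close correctly. If you replace the vague ``slab-crossing monotonicity'' with an explicit application of the graph trick (to either the original equation or the untwisted one) and use the $\tilde\omega$-area identity to produce the combination $\operatorname{Area}(S)+\operatorname{Energy}$ directly, the argument becomes the paper's.
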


\begin{proof} Let $\tilde{E}=[-1,2]\times(\mathbb{R}/\mathbb{Z})\times E(R)$, and define an almost complex structure $\tilde{J}$ on $\tilde{E}$ by, on each $[-1,2]\times \{t\}\times E(R)$, setting $\tilde{J}|_{TE(R)}=J_t$, $\tilde{J}\partial_s=\partial_t+X_{H(t,\cdot)}$, and $\tilde{J}\partial_t=-\partial_s-J_tX_{H(t,\cdot)}$.  $\tilde{J}$ is tamed by the symplectic form $\tilde{\omega}=ds\wedge dt-dt\wedge dH+\omega$. Take $\alpha_0$ small enough that for any $x\in [0,1]\times(\mathbb{R}/\mathbb{Z})\times E(R/2)\subset \tilde{E}$ the exponential map at $x$ for the Riemannian manifold $(\tilde{E},g_{\tilde{J}})$ is an embedding on the ball of radius $\alpha_0$ in $T_{x}\tilde{E}$.  That $u\co S\to E(2\alpha;3\alpha)$ satisfies the Floer equation (\ref{floereqn}) is equivalent to the statement that the map $\tilde{u}\co S\to \tilde{E}$ defined by $\tilde{u}(z)=(z,u(z))$ is $\tilde{J}$-holomorphic; further we have (for any subsurface $S'\subset S$) \[ \int_{S'} \tilde{u}^*\tilde{\omega}=\int_{S'} ds\wedge dt+\int_{S'} \left|\frac{\partial u}{\partial s}\right|_{J_t}^2 dsdt.  \]
But our hypothesis implies that there is $z_0\in S$ such that $\langle z_0,z_0\rangle^{1/2}=5\alpha/2$.  Choosing $\eta\in (\alpha/4,\alpha/3)$ so that $\eta$ is a regular value of $\phi_{z_0}\co z\mapsto dist(\tilde{u}(z),\tilde{u}(z_0)),$ $S'=\phi_{z_0}^{-1}(B_{\eta}(\tilde{u}(z_0)))$ is a submanifold of $S$ such that $z_0\in S'$ and $\tilde{u}(\partial S')\subset \partial B_{\eta}(\tilde{u}(z_0))$.  So Proposition 4.3.1(ii) of \cite{Si} gives a constant $C'$ such that $\int_{S'}\tilde{u}^{*}\tilde{\omega}\geq C'\eta^2\geq \frac{C'}{16}\alpha^2$, from which the lemma immediately follows.  
\end{proof}

\begin{theorem}\label{c0}  There are constants $D$ and $\alpha_0$, depending only on $J$ and the function $\delta h\co M\to\mathbb{R}$, with the following property.  Let $\alpha\leq\alpha_0$, and let $H$ be of form (\ref{hform}).  Suppose that $u\co\mathbb{R}\times(\mathbb{R}/\mathbb{Z})\to E(R)$ is a solution to (\ref{floereqn}) such that there are $\gamma_{\pm}\co \mathbb{R}/\mathbb{Z}\to E(\alpha)$ with $u(s,\cdot)\to \gamma_{\pm}$ uniformly as $s\to\pm\infty$.  Suppose also that \[ E(3\alpha;R)\cap u(\mathbb{R}\times(\mathbb{R}/\mathbb{Z}))\neq\varnothing.\]  Then \[\int_{-\infty}^{\infty}\int_{0}^{1}\left|\frac{\partial u}{\partial s}\right|_{J_t}^{2}dsdt\geq D\alpha^4.\]

\end{theorem}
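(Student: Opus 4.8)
The plan is to control the radial function $\rho(s,t)=\langle u(s,t),u(s,t)\rangle^{1/2}$ (so that $u(s,t)\in E(r)$ exactly when $\rho(s,t)\le r$) together with the loops $\gamma_s:=u(s,\cdot)$, and to feed these into Lemmas \ref{loopbig}, \ref{fast} and \ref{sik}. Two preliminary remarks. First, since $u$ solves (\ref{floereqn}) one has $|\partial_t u-X_H|_{J_t}=|\partial_s u|_{J_t}$ pointwise, and $\dot\gamma_s=\partial_t u$, so the integral $\int_0^1|\dot\gamma_s(t)-X_H(t,\gamma_s(t))|^2\,dt$ appearing in Lemmas \ref{loopbig} and \ref{fast} (taken there in the metric $g$) and the integral $\int_0^1|\partial_s u(s,t)|^2_{J_t}\,dt$ differ only by factors $\|J_t\|^{\pm2}$; hence those lemmas yield \emph{pointwise-in-$s$} lower bounds of order $\|J_t\|^{-2}\alpha^2$ on $\int_0^1|\partial_s u(s,t)|^2_{J_t}\,dt$ as soon as $\gamma_s$ is radially spread out. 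Second, the asymptotic hypothesis gives a $T$ with $\rho(s,\cdot)<2\alpha$ for $|s|\ge T$, while the hypothesis that $u$ meets $E(3\alpha;R)$ gives a point with $\rho\ge3\alpha$, and $\rho\le R$ everywhere.

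Introduce the compact nonempty set $\mathcal I=\{s:\max_t\rho(s,t)\ge2\alpha\}\subset[-T,T]$, fix a small constant $c>0$, and split into two cases. \emph{Case 1: $|\mathcal I|\ge c\alpha^2$.} For each $s\in\mathcal I$ the loop $\gamma_s$ meets $E(2\alpha;R)$, so applying Lemma \ref{fast} if $\gamma_s$ also meets $E(\alpha)$ and Lemma \ref{loopbig} if instead $\gamma_s\subset E(\alpha;R)$ gives $\int_0^1|\partial_s u(s,t)|^2_{J_t}\,dt\ge\|J_t\|^{-2}(3\alpha/4)^2$; integrating over $\mathcal I$ bounds the energy below by $\frac{9c}{16\|J_t\|^2}\alpha^4$. \emph{Case 2: $|\mathcal I|<c\alpha^2$.} Here Lemma \ref{sik} does the work. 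By Sard's theorem choose $\ep,\ep'\in(0,\alpha/6)$ with $2\alpha+\ep$ and $3\alpha-\ep'$ regular values of $\rho$, so that $\Sigma:=\rho^{-1}([2\alpha+\ep,3\alpha-\ep'])$ is a compact surface with boundary lying in $u^{-1}(E(2\alpha;3\alpha))$, on which $H$ agrees — up to the irrelevant additive constant $B$, since $f$ vanishes on $E(2\alpha;3\alpha)$ — with $2\pi L+\delta h\circ\pi$. Joining a point where $\rho\ge3\alpha$ to the region $\{\rho<2\alpha\}$ near an end of the cylinder by a path, and extracting (by an elementary intermediate-value argument) a sub-arc of it that lies in $\Sigma$ and joins the two boundary levels $\rho=2\alpha+\ep$ and $\rho=3\alpha-\ep'$, I take $S$ to be the component of $\Sigma$ containing this sub-arc. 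Then $u(S)\subset E(2\alpha;3\alpha)$, $u(\partial S)\subset\partial E(2\alpha+\ep;3\alpha-\ep')$, and $u(\partial S)$ meets both boundary components, so $S$ is of the kind considered in Lemma \ref{sik}. Since $S\subset\{\rho\ge2\alpha\}$, its $s$-projection is an interval contained in $\mathcal I$, of length $<c\alpha^2<1$, so after an $s$-translation (which leaves (\ref{floereqn}) unchanged) $S\subset[0,1]\times(\mathbb R/\mathbb Z)$. Lemma \ref{sik} then gives $\mathrm{Area}(S)+\int_S|\partial_s u|^2_{J_t}\ge C\alpha^2$ while $\mathrm{Area}(S)\le|\mathcal I|<c\alpha^2$, so the energy is at least $(C-c)\alpha^2$. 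Setting $c=C/2$ and taking $\alpha_0$ no larger than the threshold in Lemma \ref{sik} and small enough that $c\alpha_0^2<1$, and using $\alpha^2\ge\alpha^4/\alpha_0^2$ for $\alpha\le\alpha_0$, both cases give the claim with $D=\min\{\frac{C}{2\alpha_0^2},\frac{9C}{32\|J_t\|^2}\}$, which depends only on $J$ and $\delta h$ since $C$, $\alpha_0$ and $\|J_t\|$ do.

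The step I expect to require care is the topological bookkeeping in Case 2: extracting a \emph{single connected} component $S$ of $\rho^{-1}([2\alpha+\ep,3\alpha-\ep'])$ that genuinely spans the annulus $E(2\alpha;3\alpha)$, and verifying it fits inside a unit-length window in $s$ precisely when $|\mathcal I|$ is small. The principle underlying everything is that the possibly wild behavior of $H$ (via $f$) inside $E(\alpha)$ must never enter the estimates, which is why all the geometry, in both cases, is confined to the region $\{\rho\ge2\alpha\}$, where $H$ is standard and Lemmas \ref{loopbig}, \ref{fast}, \ref{sik} apply with constants independent of $f$. Finally, the fourth power $\alpha^4$ — rather than the $\alpha^2$ each estimate individually suggests — is exactly what the dichotomy produces: one loses a factor $\alpha^2$ in passing from ``$\gamma_s$ is radially spread out'' to ``this persists over an $s$-interval of length $\gtrsim\alpha^2$'', and recovers a clean $\alpha^2$ from Lemma \ref{sik} only when that interval is too short to help.
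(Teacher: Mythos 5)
Your proof is correct and follows essentially the same route as the paper's: both split on the measure of the set of $s$ for which the loop $u(s,\cdot)$ reaches past radius $2\alpha$ (the paper's $\mathcal{Z}$ is the set where the loop meets $E(2\alpha;3\alpha)^{\circ}$, your $\mathcal{I}$ is the slightly larger set where $\max_t\rho(s,t)\ge 2\alpha$, but this makes no difference), invoke Lemmas \ref{loopbig}/\ref{fast} when that set is large, and when it is small extract a connected component $S$ of $\zeta^{-1}([2\alpha+\ep,3\alpha-\ep'])$ whose $s$-projection fits in a unit window so that Lemma \ref{sik} applies. The bookkeeping of constants differs only cosmetically (you use $\alpha\le\alpha_0$ rather than $\alpha\le R/4$ to convert $\alpha^2$ to $\alpha^4$).
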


\begin{proof} Let \[ \mathcal{Z}=\{s\in\mathbb{R}|u(\{s\}\times (\mathbb{R}/\mathbb{Z}))\cap E(2\alpha;3\alpha)^{\circ}\neq \varnothing\};\] obviously $\mathcal{Z}$ is an open subset of $\mathbb{R}$.  If $s\in \mathcal{Z}$, then either $\gamma(t)=u(s,t)$ satisfies the hypothesis of Lemma \ref{loopbig} or else $\gamma$ satisfies the hypothesis of Lemma \ref{fast}.  So (since (\ref{floereqn}) shows that $\left|\frac{\partial u}{\partial s}\right|_{J_t}=\left|\frac{\partial u}{\partial t}-X_H(u(s,t))\right|_{J_t}$) we have \[ \int_{-\infty}^{\infty}\int_{0}^{1}\left|\frac{\partial u}{\partial s}\right|_{J_t}^{2}dsdt\geq \|J_t\|^{-2}\int_{-\infty}^{\infty}\int_{0}^{1}\left|\frac{\partial u}{\partial t}-X_H\right|^{2}dsdt\geq \frac{9}{16}\|J_t\|^{-2}\alpha^2 m_{Leb}(\mathcal{Z}).\]  So (where $C$ is the constant of Lemma \ref{sik}) if $\mathcal{Z}\subset\mathbb{R}$ has $m_{Leb}(\mathcal{Z})\geq C\alpha^{2}/2$ then the theorem holds for $u$ provided that $D\leq 9C\|J_t\|^{-2}/16$.

There remains the case that $\mathcal{Z}$ has measure less than $C\alpha^{2}/2$ (by lowering $\alpha_0$ if necessary, assume that $C\alpha^2/2\leq 1$).   Define $\zeta\co \mathbb{R}\times(\mathbb{R}/\mathbb{Z})\to\mathbb{R}$ by $\zeta(s,t)=\langle u(s,t),u(s,t)\rangle^{1/2}$; our hypothesis implies that $[\alpha,3\alpha]\subset Im(\zeta)$.  Choose $\ep,\ep'\in (0,\alpha/6)$ such that $2\alpha+\ep$ and $3\alpha-\ep'$ are regular values of $\eta$.  $\zeta^{-1}([2\alpha+\ep,3\alpha-\ep'])$ is then a submanifold with boundary of $\mathbb{R}\times(\mathbb{R}/\mathbb{Z})$, at least one of whose connected components, say $S$, has the property that $\zeta(\partial S)=\{2\alpha+\ep,3\alpha-\ep'\}$.  (For instance, a path in $\mathbb{R}\times (\mathbb{R}/\mathbb{Z})$ from $\zeta^{-1}(\{2\alpha+\ep\})$ to $\zeta^{-1}(\{3\alpha-\ep'\})$ can easily be seen to have a segment from $\zeta^{-1}(\{2\alpha+\ep\})$ to $\zeta^{-1}(\{3\alpha-\ep'\})$ that is contained within $\zeta^{-1}([2\alpha+
 \ep,3\alpha-\ep'])$, and we can take for $S$ the connected component of 
$\zeta^{-1}([2\alpha+\ep,3\alpha-\ep'])$ which contains this segment.)  Now for each $s$ having the property that there is $t$ with $(s,t)\in S$, we have $u(s,t)\in E(2\alpha;3\alpha)$, and so $s\in\mathcal{Z}$. Thus $S\subset\mathcal{Z}\times (\mathbb{R}/\mathbb{Z})$.  Now $\mathcal{Z}$ is a disjoint union of open intervals and $m_{Leb}(\mathcal{Z})\leq C\alpha^{2}/2\leq 1$, so since $S$ connected there is a single open subinterval of $\mathcal{Z}$, say $I\subset\mathcal{Z}$, such that $S\subset I\times(\mathbb{R}/\mathbb{Z})$.  Precomposing $u$ with a translation in the $s$-variable if necessary, we may as well assume that $I\subset [0,1]$.
  Then Lemma \ref{sik} gives that \[   
\int_{S}\left|\frac{\partial u}{\partial s}\right|_{J_t}^{2}dsdt\geq C\alpha^2-Area(S).\]  
But since $S\subset I\times(\mathbb{R}/\mathbb{Z})\subset \mathcal{Z}\times (\mathbb{R}/\mathbb{Z})$ we have $Area(S)\leq C\alpha^2/2$, and so
\[ \int_{-\infty}^{\infty}\int_{0}^{1}\left|\frac{\partial u}{\partial s}\right|_{J_t}^2dsdt\geq 
\int_{S}\left|\frac{\partial u}{\partial s}\right|_{J_t}^2dsdt\geq C\alpha^{2}/2.\]  So (since $\alpha\leq R/4$)
the theorem holds for $u$ provided that $D\leq 8C/R^2$.

Thus taking $D=\min\{8C/R^2,9C\|J_t\|^{-2}/16\}$ completes the proof.
\end{proof}

\begin{remark} \label{dindep}In fact, inspection of the proof of Lemma \ref{sik} and of the result of \cite{Si}
cited therein shows that, if the $J_t$ are chosen in a suitably small $C^2$-neighborhood of the standard almost complex structure $\bar{J}$, and if the function $\delta h$ is chosen in a sufficiently small $C^2$-neighborhood of $0$, then the constants $\alpha_0$, $C$, and $D$ of Lemma \ref{sik} and Theorem \ref{c0} can be taken independent of the particular $J_t$ and $\delta h$ from within these neighborhoods.  (Alternately, at least if $D\alpha^4$ is less than the minimal energy of a $J$-holomorphic sphere in $E(R)$, this can be seen as a direct consequence of Gromov compactness).  Also, since the proof of Theorem \ref{c0} makes use only of the behavior of $u$ on $u^{-1}(E(\alpha;3\alpha))$, it is enough to assume that the restriction of $u$ to  $u^{-1}(E(\alpha;3\alpha))$ satisfies (\ref{floereqn}).
\end{remark}
\subsection{Floer homology}
With the above $C^0$-estimate established, the definition of our Floer groups becomes an application of standard machinery.  Let $\hbar$ be equal to one-half of the minimal energy of a nonconstant $\bar{J}$-holomorphic sphere in $E(R)$; Gromov compactness of course implies that this is a positive number and that, if $J_t$ is a $t$-parametrized family of almost complex structures which are sufficiently $C^2$-close to $\bar{J}$ then, for all $t$, any nonconstant $J_t$-holomorphic sphere will have energy at least $\hbar$.  
 
Choose any $\alpha>0$ with the property that $\alpha<\alpha_0$ and $D\alpha^4<\hbar$ (where, as in Remark \ref{dindep}, $\alpha_0$ and $D$ are chosen to satisfy Theorem \ref{c0} for any $J_t$ and $\delta h$ sufficiently $C^2$-close to $\bar{J}$ and $0$).  Let $H$ be a Hamiltonian of the form (\ref{hform}) and let $a<b$ be real numbers such that $b-a<D\alpha^4$. Note that the form we are assuming for $H$ implies that all $1$-periodic orbits $\gamma\co \mathbb{R}/\mathbb{Z}\to E(R)$ of the Hamiltonian vector field $X_H$ of $H$ are contained within the region $E(\alpha)$; assume furthermore that $H$ has the property that all of its one-periodic orbits $\gamma$ are nondegenerate in the sense that, where $\phi_H$ is the time-one map of $X_H$, the linearization $d\phi_H\co T_{\gamma(0)}E(R)\to T_{\gamma(0)}E(R)$ does not have $1$ as an eigenvalue.  Use the notation $[\gamma,w]$ to denote the equivalence class of a (nondegenerate) contractible one-periodic orbit $\gamma$ of $X_H$ together with a nullhomotopy $w\co D^2\to E(R)$ of $\gamma$, with $[\gamma,w]$ equivalent to $[\gamma,w']$ provided that both $[\omega]$ and $c_1$ vanish on the sphere obtained by gluing $w$ and $w'$ orientation-reversingly along their common boundary $\gamma$.  Since $\gamma$ is nondegenerate, any such object $[\gamma,w]$ has a well-defined Maslov index $\mu_{H}([\gamma,w])$; we adopt the conventions of  Section 2 of \cite{Sal} for the definition of this index.   For any integer $k$, define \[ CF^{[a,b]}_{k}(H)=\left\{\left.\sum_{i=1}^{l}a_i[\gamma_i,w_i]\right|l\in\mathbb{N}, a_i\in\mathbb{Z}_2, a\leq \mathcal{A}_H([\gamma_i,w_i])\leq b, \mu_{CZ}([\gamma_i,w_i])=k\right\}\] and \[ CF_{*}^{[a,b]}(H)=\bigoplus_{k\in\mathbb{Z}}CF^{[a,b]}_{k}(H).\]
 
In the usual way, one then defines the matrix elements of the Floer boundary operator $\partial_{H,J_t}^{[a,b]}$ by setting  $\langle \partial_{H,J_t}^{[a,b]}[\gamma_-,w_-],[\gamma_+,w_+]\rangle$ equal to zero when $\mu_{H}([\gamma_+,w_+])\neq \mu_{H}([\gamma_-,w_-])-1$, and otherwise equal to the number modulo two of solutions (modulo $s$-translation)  $u\co \mathbb{R}\times S^1\to E(R)$ to (\ref{floereqn}) for a generic path $J_t$ of almost complex structures $C^2$-close to $\bar{J}$, having the property that $u(s,\cdot)\to \gamma_{\pm}$ as $s\to \pm\infty$ and $[\gamma_+,w^-\#u]=[\gamma_+,w_+]$ where $\gamma_-\#u$ denotes the disc obtained by gluing the cylinder $u$ to the disc $w_-$ along their common boundary component $\gamma_-$.  Note that any such $u$ has \[ Energy(u):=\int_{-\infty}^{\infty}\int_{0}^{1}\left|\frac{\partial u}{\partial s}\right|_{J_t}^{2}dsdt=\mathcal{A}_{H}([\gamma_-,w_-])-\mathcal{A}_H([\gamma_+,w_+])< D\alpha^4<\hbar.\]  The fact that $Energy(u)<  \hbar$ precludes the bubbling off of holomorphic spheres in sequences of such $u$, while the fact that $Energy(u)< D\alpha^4$ implies, via Theorem \ref{c0}, that all such $u$ are \emph{a priori} contained in the region $E(3\alpha)$.  Gromov--Floer compactness and gluing then yield in the standard way that $(\partial_{H,J_t}^{[a,b]})^2=0$, and so we obtain ($[a,b]$-filtered) Floer homology groups \[  HF_{*}^{[a,b]}(H),\] which are independent of the choice of $J_t$, provided that $J_t$ is chosen from a certain set $\mathcal{J}^{reg}(H)$ having residual intersection with a small neighborhood of $\bar{J}$.  (When $[a,b]$ is understood from the context we will often write $\partial_{H,J_t}$ for $\partial_{H,J_t}^{[a,b]}$.)

We introduce the following standard definition:
\begin{definition} Let 
$H^-$ and $H^+$ be two nondegenerate Hamiltonians on $E(R)$ such that \begin{itemize} \item $H^-$ and $H^+$ have the form (\ref{hform}), \item $H^-\leq H^+$ everywhere, and  \item $(H^+-H^-)|_{E(\alpha;R)}$ is constant, \end{itemize}
and let $J_{t}^{\pm}$ be families of almost complex structures close to $\bar{J}$.  A \textbf{monotone \linebreak[6] homotopy} from $(H^-,J^{-}_{t})$ to $(H^+,J^+)$ is a path $(H^s,J_{s,t})$ ($s\in\mathbb{R}$) of pairs consisting of Hamiltonians $H^s$ and almost complex structures $J_{s,t}$ such that \begin{itemize} \item $\frac{\partial H^s}{\partial s}\geq 0$ everywhere;\item each $H^s$ has form (\ref{hform}) with the function $\delta h$ independent of $s$ (in particular, the vector fields $X_{H^s}$ all restrict in the same way to $E(\alpha;R)$);\item For some $S>0$ we have  $(H^s,J_{s,t})=(H^-,J^{-}_{t})$ for $s<-S$ while $(H^s,J_{s,t})=(H^+,J_{t}^{+})$ for $s>S$.  \end{itemize}\end{definition} 

Let $(H^s,J_{s,t})$ be a monotone homotopy from $(H^-,J^{-}_{t})$ to $(H^+,J^{+}_{t})$, with $J_{s,t}$ $C^2$-close to (the constant path at) $\bar{J}$, and consider solutions $u\co \mathbb{R}\times S^1\to E(R)$ to the equation \begin{equation}\label{htopyeqn}  \frac{\partial u}{\partial s}+J_{s,t}(u(s,t))\left(\frac{\partial u}{\partial t}-X_{H^s}(t,u(s,t))\right)=0 \end{equation}  having the property that $u(s,\cdot)\to \gamma_{\pm}$ and $[\gamma_+,w_-\#u]=[\gamma_+,w_+]$ for generators $[\gamma_{\pm},w_{\pm}]$ of $CF^{[a,b]}_{*}(H^{\pm})$.  One has the formula \begin{align}\label{htopyenergy} \mathcal{A}_{H^-}([\gamma_-,w_-])-\mathcal{A}_{H^+}([\gamma_+,w_+])&=\int_{-\infty}^{\infty}\int_{0}^{1}
\left|\frac{\partial u}{\partial s}\right|_{J_{s,t}}^{2}dsdt+\int_{-\infty}^{\infty}\int_{0}^{1}\frac{\partial H^s}{\partial s}(t,u(s,t))dsdt \nonumber\\&\geq Energy(u)\end{align} since we assume that $\frac{\partial H^s}{\partial s}\geq 0$. In particular, since $Energy(u)\geq 0$ we have $\mathcal{A}_{H^-}([\gamma_-,w_-])\geq \mathcal{A}_{H^+}([\gamma_+,w_+])$, while since $[\gamma_{\pm},w_{\pm}]$ are required to be generators of $CF^{[a,b]}_{*}(H^{\pm})$ and so have actions differing by at most $b-a<D\alpha^4<\hbar$, we have $Energy(u)<\min\{\hbar,D\alpha^4\}$.  So, just as with the definition of the Floer boundary operator, Gromov compactness together with the radius-energy estimate Theorem \ref{c0} establish compactness of the space of such $u$ (since, as in the last sentence of Remark \ref{dindep}, the fact that $X_{H^s}$ is independent of $s$ outside $E(\alpha)$ means that $u$ actually satisfies (\ref{floereqn}) on $u^{-1}(E(\alpha;3\alpha))$).  Provided that  $J_{t}^{\pm}\in\mathcal{J}^{reg}(H^{\pm})$ and that $J_{s,t}$ is chosen from a certain set $\tilde{\mathcal{J}}^{reg}(H^s)$ having residual intersection with a small neighborhood of the constant path at $\bar{J}$, this allows us to define a map $\Phi_{H^s,J_{s,t}}\co CF^{[a,b]}_{*}(H^-)\to CF^{[a,b]}_{*}(H^+)$ by counting those solutions to (\ref{htopyeqn}) which connect generators $[\gamma_{\pm},w_{\pm}]$ having equal Maslov index.  By definition, the monotone homotopy $(H^s,J_{s,t})$ will be called \textbf{regular} if $J_{t}^{\pm}\in\mathcal{J}^{reg}(H^{\pm})$ and $J_{s,t}\in \tilde{\mathcal{J}}^{reg}(H^s)$.  The usual arguments (dating back to Theorem 4 of \cite{F}) involving a homotopy of homotopies and the Floer gluing theorem show that, whenever $(H^s,J_{s,t})$ is a regular monotone homotopy from $(H^-,J_{t}^{-})$ to $(H^+,J_{t}^{+})$, \begin{itemize} \item $ \partial_{H^+,J_{t}^{+}}\circ \Phi_{H^s,J_{s,t}}=\Phi_{H^s,J_{s,t}}\circ \partial_{H^-,J_{t}^{-}}$; \item The induced map $\Psi_{H^-}^{H^+}\co HF^{[a,b]}_{*}(H^-)\to HF^{[a,b]}_{*}(H^+)$ is independent of the choices of $H^s$ and $J_{s,t}$ (in fact, any two choices of $(H^s,J_{s,t})$ induce chain homotopic maps $\Phi_{H^s,J_{s,t}}$); and \item If $H^-\leq H^0\leq H^+$ we have \[ \Psi_{H^0}^{H^+}\circ \Psi_{H^-}^{H^0}=\Psi_{H^-}^{H^+}.\]\end{itemize}

We remark at this point in the situation of greatest interest to us, the ``period map'' $[\omega]\co \pi_2(E(R))\to \mathbb{R}$ will have dense image, as a result of which for each periodic orbit $\gamma$ of $X_H$ there will be infinitely many different choices of $w$ for which $[\gamma,w]$ is a generator of $CF^{[a,b]}_{*}(H)$, and the actions of these generators will fill up a dense subset of $[a,b]$.  As one varies the Hamiltonian, even if the periodic orbits vary in a simple way, there likely will be generators $[\gamma,w]$ moving in and out of the ``action window'' $[a,b]$ rather frequently, as a result of which (in distinct contrast to the situation in \cite{GG2}) the maps $\Psi_{H^-}^{H^+}\co HF^{[a,b]}_{*}(H^-)\to HF^{[a,b]}_{*}(H^+)$ will typically be far from being either injective or surjective.    However, in suitable situations, it will still be possible to obtain useful information about $\Psi_{H^-}^{H^+}$.

The following simple proposition will be helpful to us in the proof of Proposition \ref{p3} below.

\begin{prop}\label{ipluslower}  Suppose that $(H^s,J_{s,t})$ is a regular monotone homotopy from $(H^-,J^{-}_{t})$ to $(H^+,J^{+}_{t})$, and suppose that $\gamma\co \mathbb{R}/\mathbb{Z}\to E(R)$ has the property that, for each $s$, $\gamma$ is a nondegenerate $1$-periodic orbit of $X_{H^s}$.  Suppose also that, for all $t\in\mathbb{R}/\mathbb{Z}$, we have $H^-(t,\gamma(t))=H^+(t,\gamma(t))$.  Then, for all $w\co D^2\to E(R)$ such that $\mathcal{A}_{H^-}([\gamma,w])\in [a,b]$, we have \[ \Phi_{H^{s},J_{s,t}}([\gamma,w])=[\gamma,w]+\sum c_{[\gamma',w']}[\gamma',w'] \] where \[ \mathcal{A}_{H^+}([\gamma',w'])<\mathcal{A}_{H^+}([\gamma,w])\mbox{ whenever }c_{[\gamma',w']}\neq 0.\]
\end{prop}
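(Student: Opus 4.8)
The plan is to analyze the solutions $u$ of the homotopy equation \eqref{htopyeqn} that contribute to the matrix element $\langle\Phi_{H^s,J_{s,t}}([\gamma,w]),[\gamma',w']\rangle$. There are two separate things to establish: first, that the ``diagonal'' coefficient (the coefficient of $[\gamma,w]$ itself) is $1$; and second, that every other $[\gamma',w']$ appearing has strictly smaller $\mathcal{A}_{H^+}$-action than $[\gamma,w]$. The key hypothesis is that $\gamma$ is simultaneously a $1$-periodic orbit of every $X_{H^s}$, and that $H^-(t,\gamma(t))=H^+(t,\gamma(t))$ for all $t$; since each $H^s$ has form \eqref{hform} with the same $\delta h$, and since $H^s$ is monotone in $s$, the equality of $H^-$ and $H^+$ along $\gamma$ forces $\frac{\partial H^s}{\partial s}(t,\gamma(t))=0$ for all $s,t$. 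Consequently the constant map $u(s,t)=\gamma(t)$ is itself a solution of \eqref{htopyeqn}, and (reading off from \eqref{htopyenergy} with $u\equiv\gamma$) it connects $[\gamma,w]$ to $[\gamma,w]$ with zero energy; moreover $\mathcal{A}_{H^-}([\gamma,w])=\mathcal{A}_{H^+}([\gamma,w])$, so $[\gamma,w]$ is a legitimate generator of $CF^{[a,b]}_*(H^+)$ whenever it is one of $CF^{[a,b]}_*(H^-)$.

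For the diagonal coefficient, I would argue that the constant solution $u\equiv\gamma$ is the \emph{only} solution of \eqref{htopyeqn} connecting $[\gamma,w]$ to $[\gamma,w]$, and that it is regular (counted with multiplicity one mod $2$). Any such solution $u$ has, by \eqref{htopyenergy}, $Energy(u)=0$ together with $\int\!\!\int \frac{\partial H^s}{\partial s}(t,u(s,t))\,ds\,dt=0$; since $\frac{\partial H^s}{\partial s}\ge 0$ everywhere, $Energy(u)=0$ already forces $\partial u/\partial s\equiv 0$, so $u(s,t)=\gamma_0(t)$ for some loop $\gamma_0$, which must then be a $1$-periodic orbit of $X_{H^+}$ (equivalently of every $X_{H^s}$) with the right capping, i.e.\ $\gamma_0=\gamma$ and the capping is $w$. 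Regularity of this constant cylinder in the parametrized (homotopy) moduli problem is the standard fact that a nondegenerate constant solution of a monotone homotopy equation is automatically cut out transversally — one checks that the relevant linearized operator is surjective because $\gamma$ is nondegenerate for both endpoints and the homotopy is $s$-independent along $\gamma$; alternatively, since $u\equiv\gamma$ maps into $E(\alpha)$ where we are free to perturb $J_{s,t}$, one invokes the genericity already built into the definition of $\tilde{\mathcal{J}}^{reg}$. So the $[\gamma,w]$-coefficient of $\Phi_{H^s,J_{s,t}}([\gamma,w])$ is $1$.

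For the off-diagonal terms, suppose $u$ contributes to $\langle\Phi_{H^s,J_{s,t}}([\gamma,w]),[\gamma',w']\rangle$ with $[\gamma',w']\ne[\gamma,w]$. Equality of Maslov indices is imposed (this is how $\Phi$ is defined), and \eqref{htopyenergy} gives $\mathcal{A}_{H^-}([\gamma,w])-\mathcal{A}_{H^+}([\gamma',w'])=Energy(u)+\int\!\!\int\frac{\partial H^s}{\partial s}(t,u)\ge 0$, hence $\mathcal{A}_{H^+}([\gamma',w'])\le \mathcal{A}_{H^-}([\gamma,w])=\mathcal{A}_{H^+}([\gamma,w])$. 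It remains to rule out equality. If $\mathcal{A}_{H^+}([\gamma',w'])=\mathcal{A}_{H^+}([\gamma,w])$, then both $Energy(u)=0$ and $\int\!\!\int\frac{\partial H^s}{\partial s}(t,u(s,t))\,ds\,dt=0$; as above $Energy(u)=0$ forces $u$ to be constant in $s$, so $u\equiv\gamma_0$ for a $1$-periodic orbit $\gamma_0$ of $X_{H^+}$, and then $\gamma_-=\gamma_+=\gamma_0$ with $[\gamma_+,w_-\#u]=[\gamma_+,w_-]$, i.e.\ $[\gamma',w']=[\gamma,w]$, contradicting our assumption. Hence the strict inequality $\mathcal{A}_{H^+}([\gamma',w'])<\mathcal{A}_{H^+}([\gamma,w])$ holds for every nonzero off-diagonal coefficient, which is the claim.

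The one genuinely delicate point — the place I expect to be the main obstacle — is the regularity/transversality of the constant cylinder $u\equiv\gamma$ as a solution of the \emph{parametrized} equation \eqref{htopyeqn}, since one is not allowed to perturb the homotopy $(H^s,J_{s,t})$ arbitrarily (the Hamiltonians are constrained to the form \eqref{hform} and the $\delta h$ term is fixed). I would handle this by noting that $\gamma$ lies in $E(\alpha)$, where the only constraint on $J_{s,t}$ is $C^2$-closeness to $\bar J$ and where the $f$-term of \eqref{hform} is unconstrained, so the usual Floer–Hofer–Salamon genericity argument applies to make $\tilde{\mathcal{J}}^{reg}(H^s)$ (or, if needed, a further perturbation of the $H^s$ supported in $E(\alpha)$ and $s$-independent along $\gamma$) achieve transversality at the constant solution; this is exactly the kind of local statement the appendix and the standard references quoted in Section 2 are set up to provide, so in the write-up I would simply invoke it. Everything else is an energy/action bookkeeping argument of the sort already carried out around \eqref{htopyenergy}.
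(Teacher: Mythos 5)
Your overall plan matches the paper's proof, but there is one genuine missing step: you never verify that $\mu_{H^-}([\gamma,w])=\mu_{H^+}([\gamma,w])$. This is not a formality. The chain map $\Phi_{H^s,J_{s,t}}$ is defined to count only those homotopy solutions connecting generators of \emph{equal} Maslov index, and the Maslov index is computed with respect to the Hamiltonian (the linearized flow enters, not just the action). You show that the constant cylinder $u\equiv\gamma$ has zero energy and connects $[\gamma,w]$ to $[\gamma,w]$ with the \emph{same action}, but action equality and index equality are independent pieces of information. If $\mu_{H^-}([\gamma,w])\neq\mu_{H^+}([\gamma,w])$, the constant cylinder is not an index-zero solution, is not counted by $\Phi$, and your claimed diagonal coefficient of $1$ would be wrong. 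The paper dispatches this in its opening paragraph: writing $A^s(t)$ for the linearization of $\phi^{s,t}$ at $\gamma(0)$ in a trivialization of $w^*TE(R)$, the paths $A^s$ give a homotopy from $A^-$ to $A^+$, and the hypothesis that $\gamma$ is nondegenerate for \emph{every} $H^s$ guarantees no $A^s(1)$ has $1$ as an eigenvalue; hence the Conley--Zehnder index is constant along the homotopy and $\mu_{H^-}([\gamma,w])=\mu_{H^+}([\gamma,w])$. You have exactly the hypothesis needed (nondegeneracy for all $s$) but never invoke it for this purpose; inserting this Conley--Zehnder homotopy-invariance argument closes the gap.

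Two smaller remarks. Your action and energy bookkeeping via \eqref{htopyenergy} is correct and is the same mechanism the paper uses for both the uniqueness of the zero-energy solution and the strict inequality for off-diagonal terms. On the regularity of the constant cylinder, which you flag as the ``delicate point'': the paper simply takes the hypothesis that $(H^s,J_{s,t})$ is a \emph{regular} monotone homotopy at face value. Once one knows (from the energy argument) that the constant cylinder is the only point of the relevant moduli space, regularity of the homotopy forces it to be transversally cut out, so it contributes $1$ mod $2$; your more elaborate discussion of where one is free to perturb $J_{s,t}$ is not wrong, but the paper treats this as already packaged into the definition of ``regular'' and does not dwell on it.
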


\begin{proof}  First we note that the hypothesis implies that $\mu_{H^-}([\gamma,w])=\mu_{H^+}([\gamma,w])$.  Indeed, where $\phi^{s,t}$ is the time-$t$ map of the Hamiltonian flow of $H^s$, and $\phi^{\pm,t}$ is the time-$t$ map of the Hamiltonian flow of $H^{\pm}$, one has $\mu_{H^{\pm}}([\gamma,w])=n-\mu_{CZ}(A^{\pm})$ where $A^{\pm}\co [0,1]\to Sp(2n)$ is the path of symplectic matrices obtained by setting $A^{\pm}(t)$ equal to the linearization at $\gamma(0)$ of the map $\phi^{\pm,t}$, as measured via a symplectic trivialization of $w^*TM$, and $\mu_{CZ}$ is the Conley--Zehnder index (see \cite{Sal}, Section  2.6).  But, where $A^s\co [0,1]\to Sp(2n)$ is defined similarly with $\phi^{\pm,t}$ replaced by $\phi^{s,t}$, the paths $A^s$ give a homotopy from $A^-$ to $A^+$; moreover the fact that $\gamma$ is nondegenerate for each $H^s$ shows that none of the matrices $A^s(1)$ has $1$ as an eigenvalue.  Hence the homotopy invariance of the Conley--Zehnder index (\cite{Sal}, Section 2.4) shows that $\mu_{CZ}(A^+)=\mu_{CZ}(A^-)$ and hence that  
$\mu_{H^-}([\gamma,w])=\mu_{H^+}([\gamma,w])$.

Given this, the fact that $\dot{\gamma}(t)=X_{H^s}(\gamma(t))$ for every $s$ implies that setting $u(s,t)=\gamma(t)$ gives an index-zero solution to (\ref{htopyeqn}), regardless of the choice of $J_{s,t}$.  Meanwhile, if $u$ is any  solution to (\ref{htopyeqn}) which is \emph{not} of the form $u(s,t)=\gamma(t)$, asymptotic say to $[\gamma',w']$ as $s\to \infty$ and to $[\gamma,w]$ as $s\to -\infty$, then we must have $\frac{\partial u}{\partial s}(s,t)\neq 0$ for some $(s,t)\in \mathbb{R}\times (\mathbb{R}/\mathbb{Z})$, and hence $\int_{\mathbb{R}\times (\mathbb{R}/\mathbb{Z})}\left|\frac{\partial u}{\partial s}\right|^2dsdt>0$.  So since $\frac{\partial H^s}{\partial s}\geq 0$ everywhere, (\ref{htopyenergy}) implies that \[ \mathcal{A}_{H^-}([\gamma,w])>\mathcal{A}_{H^+}([\gamma',w']).\]  So since the hypothesis $H^-(t,\gamma(t))=H^+(t,\gamma(t))$ implies that $\mathcal{A}_{H^-}([\gamma,w])=\mathcal{A}_{H^+}([\gamma,w])$, the proposition follows directly from the definition of the map $\Phi_{H^{s},J_{s,t}}$. (No considerations of sign are needed since we are working modulo two.)
\end{proof}

\section{Special features of the Floer complexes of certain Hamiltonians}

The proof of Theorem \ref{main2} requires us to understand certain properties of the Floer complexes of Hamiltonians on $E(R)$ having a particular form. 

As a first step, we prove the following elementary fact.
\begin{prop} \label{strongyorke}Let $V$ be a $C^1$ vector field on a closed Riemannian manifold $(M,g_0)$, $0\leq \rho<1$, and $U\subset M$ an open neighborhood of the zero locus $V^{-1}(\{0\})$.  Then there is $\delta_0>0$ having the following significance.  If $0<\delta<\delta_0$, and if $x\co \mathbb{R}/\mathbb{Z}\to M$ satisfies \begin{itemize} \item $\dot{x}(t)=\delta V(x(t)) \mbox{ if }x(t)\in U$ and  \item $|\dot{x}(t)-\delta V(x(t))|\leq \rho\delta |V(x(t))| $ for all $t$,\end{itemize} then $x$ is the constant loop at some zero of $V$.
\end{prop}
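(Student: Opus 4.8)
The plan is to split according to whether the loop $x$ ever leaves $U$, deriving the constant $\delta_0$ from two compactness observations. First, since $V^{-1}(\{0\})$ is a compact subset of the open set $U$, there is $\eta>0$ with $|V(p)|\geq\eta$ for every $p\notin U$: otherwise one could find $p_k\notin U$ with $V(p_k)\to 0$, and a subsequential limit of the $p_k$ would lie in $V^{-1}(\{0\})\subset U$, contradicting openness of $U$. Second, the two hypotheses on $x$ give $|\dot x(t)|\leq(1+\rho)\delta|V(x(t))|\leq(1+\rho)\delta\|V\|_{C^0}$ for all $t$, so the length of $x$ --- and hence $\mathrm{dist}(x(t),x(t'))$ for any $t,t'$ --- is at most $(1+\rho)\delta\|V\|_{C^0}$. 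Thus, provided $\delta_0$ is small, the image of $x$ lies in a geodesic ball of radius below the injectivity radius of $(M,g_0)$; I will shrink $\delta_0$ a few more times below.

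Suppose first that $x(t_0)\notin U$ for some $t_0$. Then $|V(x(t_0))|\geq\eta$, and combining this with the Lipschitz bound on $V$ and the smallness of $x$ gives $|V(x(t))|\geq\eta/2$ for every $t$, once $\delta_0$ is small. On a geodesic ball $B$ about $x(t_0)$ containing the image of $x$, introduce the ``approximate primitive'' $f\co B\to\mathbb{R}$, $f(p)=\langle\exp_{x(t_0)}^{-1}(p),V(x(t_0))\rangle_{g_0}$; then $\nabla f(x(t_0))=V(x(t_0))$, and a compactness argument controlling the variation of $V$ and the distortion of $g_0$ in normal coordinates yields $|\nabla f(p)-V(p)|\leq K\,\mathrm{dist}(p,x(t_0))\leq K(1+\rho)\delta\|V\|_{C^0}$ on $B$, with $K$ depending only on $V$ and $(M,g_0)$. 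Since $x$ is a loop, $\int_0^1\langle\nabla f(x(t)),\dot x(t)\rangle\,dt=0$; but writing $\dot x=\delta V(x)+E$ with $|E|\leq\rho\delta|V(x)|$, the integrand is pointwise at least $(1-\rho)\delta|V(x)|^2-|\nabla f(x)-V(x)|\,|\dot x|\geq(1-\rho)(\eta/2)^2\delta-K(1+\rho)^2\|V\|_{C^0}^2\delta^2$, so integrating gives $0\geq(1-\rho)(\eta/2)^2\delta-K(1+\rho)^2\|V\|_{C^0}^2\delta^2$. This fails once $\delta$ is below a positive constant depending only on $\rho$, $V$, and $(M,g_0)$; shrinking $\delta_0$ past that constant, we conclude $x(t)\in U$ for all $t$.

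It then follows from the first hypothesis that $\dot x(t)=\delta V(x(t))$ for every $t$, so $x$ is an honest $1$-periodic orbit of the $C^1$ vector field $\delta V$, whose Lipschitz constant is $\delta\|V\|_{C^1}$. By the theorem of Yorke on lower bounds for periods --- or directly, working in normal coordinates at $x(0)$ and applying the Wirtinger inequality over $[0,1]$ to $\dot x$ (which has zero mean there since $x$ is $1$-periodic), together with $|\ddot x|\leq\delta\|V\|_{C^1}|\dot x|$ --- a nonconstant such orbit would force $2\pi\leq\delta\|V\|_{C^1}$, which is impossible once $\delta_0<2\pi/\|V\|_{C^1}$. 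Hence $x$ is constant, and its constant value is then a zero of $V$.

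The step I expect to demand the most care is the behavior near $V^{-1}(\{0\})$, where $|V|$ is small and the approximate-primitive estimate degenerates and gives nothing; this is exactly why the argument is arranged so that whenever $x$ stays in $U$ (in particular, whenever it stays near the zero set) one falls back on Yorke's theorem rather than on the primitive argument. Ensuring that $\delta_0$ comes out independent of the loop $x$ --- which rests on the uniform bound $\eta$ for $|V|$ off $U$ and on the uniform constant $K$ --- is the other point needing attention, but both are consequences of compactness of $M$.
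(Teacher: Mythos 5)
Your proof is correct and follows essentially the same route as the paper's: a case split on whether the loop stays near the zero set of $V$ (where Yorke's estimate applies because the hypothesis forces $\dot{x}=\delta V(x)$) versus far from it (where $|V|$ is bounded below by compactness, and one derives a contradiction from the loop's velocity having a persistent direction). Your "approximate primitive" $f$ is precisely the linear function in normal coordinates whose constant gradient is the vector field $\bar{V}_1$ of the paper, and integrating $\langle\nabla f,\dot{x}\rangle$ around the loop is just the packaging of the paper's observation that $\dot{x}$ has strictly positive inner product with a constant vector; your dichotomy ``leaves $U$ / stays in $U$'' replaces the paper's auxiliary neighborhood $U'$ but amounts to the same thing once one notes that the loop's diameter shrinks with $\delta$.
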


\begin{remark} \label{yorke} When $\rho=0$, the Yorke estimate \cite{Y} shows that $\delta_0$ can be taken equal to $2\pi/C$ where $C$ is the Lipschitz constant of $V$ as measured via an embedding of $M$  in $\mathbb{R}^N$.  
\end{remark}

\begin{proof}  The hypothesis implies that $dist(x(s),x(t))\leq \delta(1+\rho)\|V\|_{C^0}$ for all $s,t\in\mathbb{R}/\mathbb{Z}$.  Write \[ d_U=\inf\{dist(p,q)|p\in V^{-1}(\{0\}),q\in M\setminus U\}\] 
and $\beta_0=\frac{1}{4}d_U\|V\|_{C^0}^{-1}$; the set \[ U'=\{p\in M|dist(p,M\setminus U)>d_U/2\}\] is then an open neighborhood of $V^{-1}(\{0\})$ with the property that, as long as $\delta_0\leq\beta_0$, if for some $t_0$ we have $x(t_0)\in U'$, then for all $t$ we have $x(t)\in U$ (and so $\dot{x}(t)=V(x(t))$ by the hypothesis on $x$).  Thus if $x(\mathbb{R}/\mathbb{Z})\cap U'\neq\varnothing$, then $x$ actually satisfies the hypothesis of the proposition with $\rho=0$, so that (using any $\delta_0\leq \min\{\beta_0,2\pi/C\}$) the proposition holds for $x$ by the Yorke estimate mentioned in Remark \ref{yorke}. 

As such, it suffices to consider those $x\co \mathbb{R}/\mathbb{Z}\to M$ whose images are contained entirely in $M\setminus U'$.  $M\setminus U'$ is a compact set on which $V$ vanishes nowhere, so let $\ep=\min_{p\in M\setminus U'}|V(p)|$.  
 Let $\gamma_0=(2\|V\|_{C^0})^{-1}injrad(M,g_0)$.  Where $W\subset \mathbb{R}^{\dim M}$ is a neighborhood of the origin containing a ball of radius $injrad(M,g_0)$, let $\phi\co W\to M$ be a normal coordinate chart with $\phi(\vec{0})=x(0)$.  Let $B_0=\phi(W)$.  Let $V_0$ denote the vector field on $B_0$ obtained by the parallel transport of $V(x(0))$ along geodesics in $B_0$ issuing from $x(0)$.  Extend $(\phi^{-1}_{*})_{x(0)}V(x(0))\in T_{\vec{0}}W$ to a constant vector field $\bar{V}_1$ on $W\subset \mathbb{R}^{\dim M}$, and  define a vector field $V_1$ on $B_0$ by $V_1(p)=(\phi_*)_{\phi^{-1}(p)}\bar{V}_1(\phi^{-1}(p))$.  
 
 Now  if $\delta<\gamma_0$, the image of $x$ is then contained inside $B_0$.   For $p\in B_0$, we then have \[ |V_0(p)-V(p)|\leq \|V\|_{C^1}dist(p,x(0))\] and \[ |V_0(p)-V_1(p)|\leq \zeta\|V\|_{C^0}dist(p,x(0)),\] where $\zeta>0$ is some number depending only on the metric $g_0$ (in particular, $\zeta$ can be taken independent of $x(0)$).  So, for each $t$, we have (by the hypothesis on $x$) \begin{align*} g_0(\dot{x}(t),&V_1(x(t)))\geq g_0(\dot{x}(t),V(x(t)))-|\dot{x}(t)||V_1(x(t))-V(x(t))|
\\&\geq (1-\rho)\delta |V(x(t))|^2-(1+\rho)\delta|V(x(t))|(1+\zeta)\|V\|_{C^1}dist(x(t),x(0))\\&\geq (1-\rho)\delta\ep^2-(1+\zeta)(1+\rho)^2\delta^2\|V\|_{C^1}^{2}>0\end{align*} provided that $\delta$ is less than some constant $\delta_0$ which depends only on $\zeta,\gamma_0,\rho,\ep,\|V\|_{C^1}$.  $\phi^{-1}(x(t))$ $(0\leq t\leq 1)$ is thus a path contained entirely within a neighborhood $W$ of the origin in $\mathbb{R}^{\dim M}$ whose velocity vector has strictly positive inner product with the nonzero \emph{constant} vector field $\bar{V}_1$ on $W$, and this precludes the possibility that $x(1)=x(0)$, contrary to the hypothesis of the theorem.

This shows that in fact if $\delta_0$ is small enough every $x$ satisfying the hypotheses of the theorem must pass through $U'$, and so must be a constant loop at a zero of $V$ by our earlier remarks.  
\end{proof}

Recall that at the outset we have fixed a Morse function $h\co M\to\mathbb{R}$, and chosen our connection $A$ to be trivial in a neighborhood of the set of critical points of $h$.
\begin{definition} For $\delta>0$, a \emph{$\delta h$-Hamiltonian} on $E(R)$ is an autonomous Hamiltonian $H\co E(R)\to\mathbb{R}$ of the form \[ H=f\circ L+\delta h\circ\pi,\] where $f\co \mathbb{R}\to\mathbb{R}$ is a smooth function (and again $L(x)=\frac{1}{4}\langle x,x\rangle$).
\end{definition}

Let us consider the Hamiltionian vector fields $X_H$ of $\delta h$-Hamiltonians $H$. Of course $X_H=X_{f\circ L}+\delta X_{h\circ \pi}$.  We have noted earlier that (under the canonical identification of $E(R)$ with the $R$-disc bundle in the restriction of $T^{vt}E$ to the zero section of $E$) $X_{2\pi L}(x)=-\pi i x$, so that \[ X_{f\circ L}(x)=-\frac{if'(L(x))}{2}x.\]  In particular $X_{f\circ L}(x)\in T^{vt}_{x}E(R)$, and $dL(X_{f\circ L})=0$.  Now we consider $X_{h\circ \pi}$.  Now \[ d(h\circ\pi)_x(v)=(dh)_{\pi(x)}(\pi_*v)=0\mbox{ for }v\in T^{vt}E(R),\] so since $T^{hor}E(R)$ is the orthogonal complement of $T^{vt}E(R)$ with respect to our symplectic form $\omega=\pi^{*}\omega_0+d\theta$, we have $X_{h\circ \pi}(x)\in T^{hor}_{x}E(R)$ for all $x$.  For $w\in T_{\pi(x)}M$  let $w^{\#}$ denote its horizontal lift to $T^{hor}_{x}E(R)$.  Let $Y_h$ denote the Hamiltonian vector field of $h\co M\to\mathbb{R}$ (using the symplectic form $\omega_0$ on $M$).   																

We find, for $w\in T^{hor}_{x}E$, \[ \omega_x(Y_{h}^{\#},w)=(\omega_0)_{\pi(x)}(Y_h,\pi_*w)+d\theta(Y_{h}^{\#},w)=d(h\circ\pi)_{x}(w)+d\theta(Y_{h}^{\#},w);\] thus \[ \iota_{Y_{h}^{\#}-X_{h\circ\pi}}\omega=\iota_{Y_{h}^{\#}}d\theta,\] and so \begin{equation}\label{hamdiff} |X_{h\circ\pi}(x)-Y_{h}^{\#}(x)|\leq \|F_A(\pi(x))\|\langle x,x\rangle |Y_{h}^{\#}(x)|.\end{equation}

This yields:
\begin{prop} Provided that $R^2<(2\|F_A\|_{\infty})^{-1}$, there is $\delta_0>0$ such that, if $\delta<\delta_0$, if $H$ is any $\delta h$-Hamiltonian, and if $x\co \mathbb{R}/\mathbb{Z}\to E(R)$ is any solution to $\dot{x}(t)=X_H(x(t))$, then $\pi\circ x\co \mathbb{R}/\mathbb{Z}\to M$ is the constant loop at some critical point of $h$. 
\end{prop}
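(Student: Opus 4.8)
The plan is to deduce the statement from Proposition \ref{strongyorke}, applied on $(M,\omega_0)$ with $V$ taken to be the Hamiltonian vector field $Y_h$ of $h$ (whose zero locus is exactly the critical set of $h$) and with $U$ a neighborhood of $\mathrm{Crit}(h)$ on which the connection $A$, hence its curvature $F_A$, vanishes; such a $U$ exists by our standing hypothesis on $A$. The constant $\delta_0$ will then be the one furnished by Proposition \ref{strongyorke} for this $V$, this $U$, and $\rho=\sqrt3/2$; since every estimate below scales linearly in $\delta$, no further constraint on $\delta$ is needed. First I would put $y=\pi\circ x$, a smooth loop in $M$. Because $X_H=X_{f\circ L}+\delta X_{h\circ\pi}$ and $X_{f\circ L}(x)=-\tfrac{if'(L(x))}{2}x$ is vertical, $\pi_*X_{f\circ L}=0$, so $\dot y(t)=\delta\,\pi_*X_{h\circ\pi}(x(t))$; and since $X_{h\circ\pi}$ is horizontal while $\pi_*$ carries the horizontal lift $Y_h^{\#}$ to $Y_h$, this rewrites as
\[ \dot y(t)-\delta Y_h(y(t))=\delta\,\pi_*\!\big(X_{h\circ\pi}(x(t))-Y_h^{\#}(x(t))\big).\]

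The first hypothesis of Proposition \ref{strongyorke} is then immediate: when $y(t)\in U$ the curvature vanishes at $\pi(x(t))$, so (\ref{hamdiff}) forces $X_{h\circ\pi}(x(t))=Y_h^{\#}(x(t))$ and hence $\dot y(t)=\delta Y_h(y(t))$. For the second hypothesis I would compare the metrics $g$ and $g_0$ on horizontal vectors. Formulas (i)--(iii) for $d\theta$ give, for horizontal lifts, $g(u^{\#},w^{\#})=g_0(u,w)+\tfrac12\big(\langle -iF_A(u,J_0w)x,x\rangle+\langle -iF_A(w,J_0u)x,x\rangle\big)$, and since $\|F_A\|_\infty\langle x,x\rangle\leq\|F_A\|_\infty R^2<\tfrac12$ the correction term has absolute value at most $\|F_A\|_\infty\langle x,x\rangle\,|u|_{g_0}|w|_{g_0}<\tfrac12|u|_{g_0}|w|_{g_0}$; consequently $\tfrac12|u|_{g_0}^2<|u^{\#}|_g^2<\tfrac32|u|_{g_0}^2$ for every horizontal lift. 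Combining this with the displayed identity and with (\ref{hamdiff}) (which gives $|X_{h\circ\pi}(x)-Y_h^{\#}(x)|_g\leq\|F_A(\pi(x))\|\langle x,x\rangle|Y_h^{\#}(x)|_g<\tfrac12|Y_h^{\#}(x)|_g$) yields
\[ |\dot y(t)-\delta Y_h(y(t))|_{g_0}<\delta\cdot\sqrt2\cdot\tfrac12\cdot\sqrt{\tfrac32}\;|Y_h(y(t))|_{g_0}=\tfrac{\sqrt3}{2}\,\delta\,|Y_h(y(t))|_{g_0},\]
which is exactly the second hypothesis of Proposition \ref{strongyorke} with $\rho=\sqrt3/2<1$. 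That proposition then forces $y=\pi\circ x$ to be the constant loop at a zero of $Y_h$, i.e. at a critical point of $h$, as desired.

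The one step needing genuine care is the chain of estimates in the last display: one must track how the two-sided distortion between $g|_{T^{hor}E(R)}$ and $\pi^*g_0$ compounds with the curvature factor $\|F_A\|_\infty\langle x,x\rangle$ coming from (\ref{hamdiff}), and verify that the product of the three resulting constants equals $\sqrt3/2<1$. It is precisely the standing bound $R^2<(2\|F_A\|_\infty)^{-1}$ — the same one that makes $\omega$ symplectic and $\bar J$-tamed — that keeps all of these factors under control, so no shrinking of $R$ is required. Everything else (that $\pi_*$ annihilates vertical vectors and sends $Y_h^{\#}$ to $Y_h$, and that the two displayed inequalities are verbatim the hypotheses of Proposition \ref{strongyorke}) is a direct consequence of the definitions.
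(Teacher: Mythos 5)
Your proposal is correct and follows essentially the same route as the paper: project the orbit by $\pi$, use the vertical/horizontal splitting of $X_H$ together with (\ref{hamdiff}) and the two-sided comparison $\tfrac12|\pi_*w|^2\leq|w|^2\leq\tfrac32|\pi_*w|^2$ on horizontal vectors to produce the bound $|\dot y-\delta Y_h(y)|_{g_0}\leq\tfrac{\sqrt3}{2}\delta|Y_h(y)|_{g_0}$, note that triviality of $A$ near $\mathrm{Crit}(h)$ forces equality $\dot y=\delta Y_h(y)$ there, and invoke Proposition \ref{strongyorke} with $\rho=\sqrt3/2$. Your write-up is a bit more explicit about the metric-distortion bookkeeping, but the constants and the logical structure are identical to the paper's proof.
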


\begin{proof}  First note that as long as $R^2<(2\|F_A\|_{\infty})^{-1}$, for each $w\in T^{hor}E(\alpha_0)$ we have $|d\theta(w,\bar{J}w)|\leq \frac{1}{2}\omega_0(\pi_*w,J_0\pi_*w)$, and so $\frac{1}{2}|\pi_*w|^{2}\leq |w|^2\leq \frac{3}{2}|\pi_*w|^2$.  So (\ref{hamdiff}) implies that, where $y(t)=\pi(x(t))$, \begin{align*} |\dot{y}(t)-\delta Y_h(y(t))|&\leq \sqrt{2}\|F_A(y(t))\|\langle x(t),x(t)\rangle|\delta Y_{h}^{\#}(x(t))|\\&\leq \sqrt{3} \|F_A(y(t))\|\langle x(t),x(t)\rangle \delta|Y_h(y(t))|.\end{align*}  Now the zeros of $Y_h$ are just the critical points of $h$; recall that we assumed the connection $A$ to be trivial (and hence to have vanishing curvature) on a neighborhood of each of these points.  This, together with the  hypothesis on $R$ (and the fact that $\langle x(t),x(t)\rangle\leq R^2$), show  that the above coefficient $\sqrt{3} \|F_A(y(t))\|\langle x(t),x(t)\rangle$ is at most $\sqrt{3}/{2}$, and vanishes near the zeros of $Y_h$.  Hence we may apply Proposition \ref{strongyorke} with $\rho=\sqrt{3}/2$ to deduce the result.
\end{proof}
\subsection{The perturbations and their periodic orbits}
Suppose that $H=f\circ L+\delta h\circ \pi$ is a $\delta h$-Hamiltonian, with $\delta<\delta_0$.  We have just established that the $1$-periodic orbits of $X_H$ all lie in the fibers $E_p$ for $p$ some critical point of $h\co M\to\mathbb{R}$.  To specifically identify these orbits, note that our calculations have shown that, for $x\in E_p$, we have $X_H(x)=-\frac{if'(L(x))}{2}x$, so that the time-one map $\phi_H=\phi_1$ of $X_H$ restricts to the fibers $E_p$ over critical points $p$ of $h$ as \[ \phi_H(x)=e^{-if'(L(x))/2}x \quad (x\in E_p);\] thus where \[ \mathcal{L}_f=\{0\}\cup \{L> 0|f'(L)\in 4\pi\mathbb{Z}\},\] the fixed points of $\phi_H$ are precisely the points lying on a sphere of radius $\ell$ in the fiber over a critical point of $h$, where $\ell\in\mathcal{L}_f$.  In particular (unless $\mathcal{L}_f=\{0\}$) $H$ will be a degenerate Hamiltonian, so if we wish to take its Floer homology we will need to perturb it.

We assume, as will be the case in our application, that $0<f'(0)<4\pi$ and that, for each $\ell\in \mathcal{L}_f\setminus\{0\}$ we have $f''(\ell)\neq 0$, which in particular implies that $\mathcal{L}_f$ is a discrete set.   Let $\beta_f\co [0,R]\to [0,1]$ be a smooth function with the property that $\beta_f(s)=0$ for $s$ in some neighborhood of $\{0,R\}$, while $\beta_f(s)=1$ for $s$ in some neighborhood $V_f$ of $\mathcal{L}_f\setminus \{0\}$.  (If, as in the application, we have $f'(L)=2\pi$ for $L\geq \alpha$, then the support of $\beta_f$ should be contained in $[0,\alpha]$.)  Let $U$ be a neighborhood of $Crit(h)$ over which the bundle $E|_U\to U$ and the connection $A$ are trivial (and choose a trivialization $E|_U\cong U\times\mathbb{C}^r$ in terms of which the connection is standard), let $V$ be a neighborhood of $Crit(h)$ compactly contained in $U$, and let $\chi\co M\to [0,1]$ be a smooth function such that $\chi|_V=1$ and $\chi|_{M\setminus U}=0$.  Let $(x_1+iy_1,\ldots,x_r+iy_r)$ be fiberwise complex coordinates for $E|_U$, and define $g_f\co E(R)\to\mathbb{R}$ by \[ g_f(u,x_1+iy_1,\ldots,x_r+iy_r)=\chi(u)\beta_f(\sum(x_{j}^{2}+y_{j}^{2})/4)y_1 \] for $(u,x_1+iy_1,\ldots,x_r+iy_r)\in E|_U\cong U\times \mathbb{C}^r$ and $g_f|_{E(R)\setminus E|_U}=0$.  
Let $\psi_{f,\ep}\co E(R)\to E(R)$ denote 
the time-$1$ map of the Hamiltonian flow of $\ep g_f$.
  Finally, define the Hamiltonian $H^{\ep}\co (\mathbb{R}/\mathbb{Z})\times E(R)\to\mathbb{R}$ by \[ H^{\ep}(t,x)=H(x)+\ep g_f(\phi_{t}^{-1}(x)),\] where $\phi_t$ denotes the time $t$-flow of $X_H$.  Thus (as a standard calculation shows) $H^{\ep}$ has time-1 map $\phi_{H^{\ep}}$ equal   to $\phi_H\circ \psi_{f,\ep}$.  Of course, as $\ep\to 0$, $\psi_{f,\ep}$ converges to the identity in any $C^k$-norm, and so any fixed points of $\phi_{H^{\ep}}$ must, for sufficiently small $\ep$, be close to fixed points of $\phi_H$.  Since $\beta_f$ vanishes near the $0$-section of $E(R)$, the fixed points of $\phi_{H^{\ep}}$ near the zero section coincide with those of $\phi_H$, and thus are precisely the critical points of $h\co M\to\mathbb{R}$.  Now we consider the fixed points of $\phi_{H^{\ep}}$ away from the zero section.  For small $\ep$, all of these fixed points  of $\phi_{H^{\ep}}$ must be contained in the interior of a region where $\chi\circ \pi=\beta_f\circ L=1$.   Now for any $z=(u,x_1+iy_1,\ldots,x_r+iy_r)$ in this region, we have \[\psi_{\ep,f}(u,x_1+iy_1,\ldots,x_r+iy_r)=(u,(x_1+\ep)+iy_1,\ldots,x_r+iy_r).\]  In particular, for sufficiently small $\ep$, if $z\in Fix(\phi^{H^{\ep}})$ then $\pi(\psi_{\ep,f}(z))=\pi(z)$.    Now since the connection is trivial over $V\subset M$, we have $\pi(\phi_H(z))=\eta_{\delta h}(\pi(z))$ for $z\in \pi^{-1}(V)$, where $\eta_{\delta h}\co M\to M$ is the time-one map induced by the Hamiltonian $\delta h$ on $(M,\omega_0)$.  So since (for $\delta<\delta_0$) the only fixed points of $\eta_{\delta h}$ are the critical points of $h$, it follows that if $z$ is a fixed point of $\phi_{H^{\ep}}=\phi_H\circ \psi_{\ep,f}$ (and $\ep$ is sufficiently small), it must be that $\pi(z)\in Crit(h)$.  So any such fixed point has the form $z=(p_j,x_1+iy_1,\ldots,x_r+iy_r)$ where $p_j\in Crit(h)$; the condition for $z$ to be a fixed point is that \begin{equation}\label{fixed1} \phi_H(p_j, (x_1+\ep)+iy_1,\ldots,x_r+iy_r)=(p_j,x_1+iy_1,\ldots,x_r+iy_r).\end{equation}  Now one has $L\circ \phi_H=L$, so this forces $(x_1+\ep)^2=x_{1}^{2}$, \emph{i.e.}, $x_1=-\ep/2$.  So (\ref{fixed1}) reduces to \[ e^{-if'(L(z))/2}\left(\frac{\ep}{2}+iy_1,x_2+iy_2,\ldots,x_r+iy_r\right)=\left(-\frac{\ep}{2}+iy_1,x_2+iy_2,\ldots,x_r+iy_r\right).\]  Consideration of the first coordinate shows that $f'(L(z))/2$ must not be a multiple of $2\pi$, in view of which this forces $x_2=y_2=\ldots=x_r=y_r=0$, and \begin{equation}\label{fixed2} e^{-\frac{i}{2}f'\left(\frac{\ep^2+4y_{1}^{2}}{16}\right)}\left(\frac{\ep}{2}+iy_1\right)=-\frac{\ep}{2}+iy_1.\end{equation}
Let $\ell\in \mathcal{L}_f\setminus\{0\}$ (so $f'(\ell)=4\pi k$ for some integer $k$); we know \emph{a priori} that any fixed point $z$ of $H^{\ep}$ must (assuming $\ep$ is small enough) have $L(z)$ close to one such $\ell$.  Since we assume that $f''(\ell)\neq 0$ for all $\ell\in \mathcal{L}_f\setminus\{0\}$, for sufficiently small $\ep$ (\ref{fixed2}) has precisely two solutions $y_{+}^{\ell}$ and $y_{-}^{\ell}$ with $L$ close to $\ell$, of which the former has $y_1<0$ and $f'(\frac{\ep^2+4y_{1}^{2}}{16})$ slightly larger than $4\pi k$, while the latter has $y_1>0$ and $f'(\frac{\ep^2+4y_{1}^{2}}{16})$ slightly smaller than $4\pi k$.  Using that we also assume that $0<f'(0)<4\pi$, it is not difficult to see that for $\ep>0$ all fixed points of $\phi_{H^{\ep}}$ are nondegenerate.  This establishes:
\begin{prop}\label{orbits} Let $0<\delta<\delta_0$, and let $H=f\circ L+\delta h\circ \pi$ be a $\delta h$-Hamiltonian with the property that $f''(L)\neq 0$ whenever $f'(L)\in 4\pi\mathbb{Z}$ and $0<f'(0)<4\pi$.  Let $Crit(h)=\{p_1,\ldots,p_b\}$.  Then there is $\ep_0>0$ with the following property.  If $0<\ep<\ep_0$, then where $\phi_{H^{\ep}}$  is the time-one map  of the Hamiltonian $H^{\ep}$, the fixed points of $\phi_{H^{\ep}}$ are all nondegenerate, and are given by:
\begin{itemize}
\item $v_{1,0},\ldots,v_{b,0}$, the points on the zero section of $E(R)$ corresponding to the critical points $p_j$ of $h\co M\to\mathbb{R}$; and \item For each $\ell\in \mathcal{L}_f\setminus \{0\}$, points $v_{1,\ell}^{+},\ldots,v_{b,\ell}^{+}$, and   $v_{1,\ell}^{-},\ldots,v_{b,\ell}^{-}$ lying slightly above or slightly below the sphere of radius $\ell$ in the fibers of $E(R)\to M$ over $p_1,\ldots,p_b$.
\end{itemize}
\end{prop}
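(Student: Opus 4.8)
The plan is to assemble the analysis carried out in the paragraphs preceding the statement into a proof, the one point still requiring genuine work being the nondegeneracy assertion. First I would record that $\phi_{H^{\ep}}=\phi_H\circ\psi_{\ep,f}$ and that $\psi_{\ep,f}\to\mathrm{id}$ in every $C^k$-norm as $\ep\to0$; combined with compactness of $E(R)$ this forces, for $\ep$ small, every point of $Fix(\phi_{H^{\ep}})$ to lie close to $Fix(\phi_H)$, which is the union of the zero-section points over $Crit(h)$ and the radius-$\ell$ spheres in the fibers over $Crit(h)$ for $\ell\in\mathcal{L}_f\setminus\{0\}$. Since $\mathcal{L}_f$ is discrete and $L\leq R^2/4$ on $E(R)$, only finitely many such $\ell$ are relevant, so a single $\ep_0$ will suffice for everything below. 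Near the zero section $g_f$ vanishes identically (as $\beta_f$ does), so there $\phi_{H^{\ep}}=\phi_H$ and the fixed points near the zero section are exactly the points $v_{j,0}$ over the $p_j\in Crit(h)$.

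For the fixed points away from the zero section I would simply repeat the computation already given: for $\ep$ small such a point $z$ lies in the region where $\chi\circ\pi=\beta_f\circ L\equiv1$, on which $\psi_{\ep,f}$ is the fiberwise shift $x_1\mapsto x_1+\ep$ in the trivialization $E|_U\cong U\times\mathbb{C}^r$; since $A$ is trivial over $V$, $\phi_H$ covers $\eta_{\delta h}$, whose only fixed points are the $p_j$ (as $\delta<\delta_0$), so $z$ lies in a fiber $E_{p_j}$; invariance of $L$ under $\phi_H$ forces $x_1=-\ep/2$; the first-coordinate component of the fixed-point equation shows $f'(L(z))/2\notin2\pi\mathbb{Z}$, which forces $x_k=y_k=0$ for $k\geq2$ and reduces everything to (\ref{fixed2}). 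Transversality of each crossing $f'=4\pi k$ at $\ell$ (guaranteed by $f''(\ell)\neq0$) then yields, for each relevant $\ell$ and $\ep$ small, exactly the two solutions $y_{\pm}^{\ell}$, i.e.\ the points $v_{j,\ell}^{\pm}$; this also shows $L(z)\neq\ell$ for $\ep>0$.

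The remaining step, which I expect to be the main obstacle, is to show that $d\phi_{H^{\ep}}$ has no eigenvalue $1$ at each of these fixed points. Since $\psi_{\ep,f}$ coincides with a translation (the identity near $v_{j,0}$) on a neighborhood of each fixed point, $d\psi_{\ep,f}$ equals the identity there, so $d\phi_{H^{\ep}}$ at a fixed point $z$ equals $d\phi_H$ at $z'=\psi_{\ep,f}(z)$; and over $V$ the trivial connection makes $\phi_H$ split as the product $\eta_{\delta h}\times\rho_f$ with $\rho_f(x)=e^{-if'(L(x))/2}x$ on $\mathbb{C}^r$. The base factor $d\eta_{\delta h}|_{p_j}$ has no eigenvalue $1$ for $\delta$ small (so $\delta_0$ is to be taken small enough), because $p_j$ is a nondegenerate critical point of $h$, so that $d\eta_{\delta h}|_{p_j}=\exp(\delta\,dY_h|_{p_j})$ with $dY_h|_{p_j}$ invertible. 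For the vertical factor: at $z'=v_{j,0}$ one has $d\rho_f|_0=e^{-if'(0)/2}\mathrm{Id}_{\mathbb{C}^r}$, which has no eigenvalue $1$ precisely because $0<f'(0)<4\pi$; at $z'$ lying over $v_{j,\ell}^{\pm}$ one computes $d\rho_f|_{z'}(v)=e^{-if'(L(z'))/2}\bigl(v-\tfrac{i}{2}f''(L(z'))\,dL_{z'}(v)\,z'\bigr)$, which block-diagonalizes with respect to $\mathbb{C}z'\oplus(z')^{\perp}$: on the Hermitian complement $(z')^{\perp}$ it is scalar multiplication by $e^{-if'(L(z'))/2}$, which is $\neq1$ since $L(z')\neq\ell$ makes $f'(L(z'))\notin4\pi\mathbb{Z}$, while on $\mathbb{C}z'$ a direct $2\times2$ computation in the real basis $\{z',iz'\}$ identifies the block as (rotation by $-\theta$) composed with (unit shear of size $c':=f''(L(z'))|z'|^2/4$), whose determinant after subtracting the identity is $2(1-\cos\theta)+c'\sin\theta$, with $\theta=f'(L(z'))/2-2\pi k$; since $\theta$ is small and nonzero and $c'$ is bounded away from $0$ (because $f''(\ell)\ell\neq0$), this is nonzero, so again there is no eigenvalue $1$. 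Combining the two factors shows every fixed point is nondegenerate, and taking $\ep_0$ below all the finitely many smallness thresholds used above completes the proof.
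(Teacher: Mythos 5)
Your proof is correct and follows the same route as the paper: it reproduces the fixed-point identification carried out in the text immediately preceding the proposition, and then supplies the nondegeneracy argument that the paper dismisses with ``it is not difficult to see.''  Your block decomposition of $d\phi_{H^\ep}|_z = d\phi_H|_{\psi_{\ep,f}(z)}$ into base factor $d\eta_{\delta h}$ and fiber factor $d\rho_f$ (itself further split along $\mathbb{C}z'\oplus(z')^\perp$, with the $2\times2$ determinant $2(1-\cos\theta)+c'\sin\theta$ on $\mathbb{C}z'$) is a clean and correct way of making that claim precise.
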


In particular, where $\phi_{H^{\ep}}^{t}$ denotes the time-$t$ flow of $H^{\ep}$, each of the $1$-periodic orbits $\gamma_{j,0}(t)=\phi_{H^{\ep}}^{t}(v_{j,0})$ or $\gamma_{j,\ell}^{\pm}(t)=\phi_{H^{\ep}}^{t}(v_{j,\ell}^{\pm})$ lies entirely in one of the fibers of $E(R)\to M$ over a critical point of $h$.  Recall that the generators of the Floer complex of $H^{\ep}$ are equivalence classes $[\gamma_{j,0},w]$ or $[\gamma_{j,l}^{\pm},w]$ where $w\co D^2\to E(R)$ is a nullhomotopy (or a ``capping'') of the orbit $\gamma_{j,0}$ or $\gamma_{j,l}^{\pm}$.  The fact that the orbits are all contained in single fibers  allows us to define, for each orbit $\gamma$, a ``fiberwise capping'' $w_0\co D^2\to E(R)$ by $w(se^{2\pi it})=s\gamma(t)$  (in particular if $\gamma$ is the constant orbit at a critical point of $h$ on the zero section then this is just the constant map at the zero section).  A general generator for the Floer complex then has the form $[\gamma_{j,0},w_0\#A]$ or $[\gamma_{j,\ell}^{\pm},w_0\#A]$, where \[ A\in \frac{\pi_2(E(R))}{\ker\langle [\omega],\cdot\rangle\cap \ker\langle c_1(TE(R)),\cdot\rangle}.\]  The actions and Maslov indices are related by \[ \mathcal{A}_{H^{\ep}}([\gamma,w_0\#A])=\mathcal{A}_{H^{\ep}}([\gamma,w_0])-\int_A\omega,\]\[ \mu_{H^{\ep}}([\gamma,w_0\#A])=\mu_{H^{\ep}}([\gamma,w_0])-2\langle c_1(TE(R)),A\rangle \] (see Section 2 of \cite{Sal} for the conventions we use on the Maslov index; in particular, if $\gamma$ is the constant  orbit at a critical point $p$ of a Morse function $G$ with its trivial capping, in our convention its Maslov index $\mu_G$ is equal to its Morse index as a critical point of $-G$).  Thus to understand the actions and gradings of the generators  of the Floer complex it is enough to understand the actions and gradings of the generators $[\gamma,w_0]$ where $w_0$ is the fiberwise capping of $\gamma$ (where $\gamma$ ranges among the $\gamma_{j,0}$ and $\gamma_{j,\ell}^{\pm}$).  A routine calculation using the characterization  of the Maslov and Conley--Zehnder indices from Sections 2.4 and 2.6 of \cite{Sal} gives the following result for the Maslov indices of the $[\gamma,w_0]$ (we leave this calculation to the reader; see Section 5.2.5 of \cite{GG1} for a sketch of a similar calculation, but note that we use different conventions both for Hamiltonian vector fields and for the normalization of the Maslov index):
\begin{prop}\label{grading} In the notation of Proposition \ref{orbits},  for $\ell\in \mathcal{L}_f\setminus \{0\}$ write \[ k(\ell)=\frac{f'(\ell)}{4\pi}.\]  Also let $n=m+r=\frac{1}{2}\dim E(R)$.  Then the Maslov indices for the periodic orbits of $H^{\ep}$ with their fiberwise cappings $w_0$ are, for $\ep$ sufficiently small, given by \begin{itemize} \item $\mu_{H^{\ep}}([\gamma_{j,0},w_0])=2n-ind_{h}p_j$;\item For $\ell\in \mathcal{L}_f\setminus \{0\}$, \[ \mu_{H^{\ep}}([\gamma_{j,\ell}^{+},w_0])=\left\{\begin{array}{ll} 2n-ind_{p_j}h+2rk(l) & f''(\ell)>0
\\ 2n-ind_{p_j}h+2rk(l)-1 & f''(\ell)<0\end{array}\right.\] and\[ \mu_{H^{\ep}}([\gamma_{j,\ell}^{-},w_0])=\left\{\begin{array}{ll} 2n-ind_{p_j}h+2r(k(l)-1)+1 & f''(\ell)>0
\\ 2n-ind_{p_j}h+2r(k(l)-1) & f''(\ell)<0\end{array}\right.\]\end{itemize}
\end{prop}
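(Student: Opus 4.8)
The plan is to compute the Conley--Zehnder index $\mu_{CZ}$ of the linearized Hamiltonian flow along each orbit and then invoke the identity $\mu_{H^{\ep}}([\gamma,w])=n-\mu_{CZ}$ (as in the proof of Proposition~\ref{ipluslower}). Fix a critical point $p_j$ of $h$ and let $\gamma$ be one of $\gamma_{j,0}$, $\gamma_{j,\ell}^{\pm}$. Its fiberwise capping $w_0$ has image in the fiber $E_{p_j}$, and the connection $A$ was arranged to be trivial near $\mathrm{Crit}(h)$; hence over a neighborhood of $E_{p_j}$ we have $E|_U\cong U\times\mathbb{C}^r$ with $\omega$, and (near $E_{p_j}$) the Hamiltonian $H^{\ep}(t,\cdot)$, both splitting as a product — the vertical part being $f(L(z))+\ep\,\mathrm{Im}(e^{if'(L(z))t/2}z_1)$ on $\mathbb{C}^r$, as the computations preceding Proposition~\ref{orbits} give — while $TE(R)|_{E_{p_j}}$ is canonically the trivial symplectic bundle $\underline{T_{p_j}M}\oplus\underline{\mathbb{C}^r}$. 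I would use the induced trivialization of $w_0^{*}TE(R)\cong D^2\times(\mathbb{R}^{2m}\oplus\mathbb{C}^r)$ — which is the trivialization defining $\mu_{H^{\ep}}([\gamma,w_0])$ — and observe that with respect to it the path $t\mapsto d(\phi_{H^{\ep}}^{t})_{\gamma(0)}$ is block diagonal, with a horizontal $Sp(2m)$-block equal to the linearization at $p_j$ of the time-$t$ flow of $\delta h$ on $(M,\omega_0)$, and a vertical $Sp(2r)$-block generated by the vertical Hamiltonian above. By additivity of the Conley--Zehnder index, $\mu_{CZ}=\mu_{CZ}^{hor}+\mu_{CZ}^{vt}$.

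The horizontal block is harmless: $\delta h$ is $C^2$-small, so this is a short path, and the standard identification of the Conley--Zehnder index of the constant orbit at a nondegenerate critical point of a small Morse function with (a shift of) its Morse index — in the conventions of \cite{Sal}, consistently with the convention recalled just before the proposition — gives $\mu_{CZ}^{hor}=\mathrm{ind}_h(p_j)-m$. Together with $n=m+r$ in $\mu_{H^{\ep}}=n-\mu_{CZ}$, this already produces the $-\mathrm{ind}_h(p_j)$ and $2m$ of the $2n$ in every formula.

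Everything else lives in the vertical block. For $\gamma_{j,0}$ the linearized vertical flow is $r$ copies of the rotation $e^{-if'(0)t/2}$ through an angle $f'(0)/2\in(0,2\pi)$, each contributing $-1$, so $\mu_{CZ}^{vt}=-r$ and $\mu_{H^{\ep}}([\gamma_{j,0},w_0])=2n-\mathrm{ind}_h(p_j)$. For $\gamma_{j,\ell}^{\pm}$ I would use $X_{f\circ L}(x)=-\tfrac{i}{2}f'(L(x))x$ together with the explicit form of $v_{j,\ell}^{\pm}$ from Proposition~\ref{orbits} to split $\mathbb{C}^r$ at $v_{j,\ell}^{\pm}$ as the complex line through $v_{j,\ell}^{\pm}$ plus its Hermitian complement $\mathbb{C}^{r-1}$; a short computation shows the vertical Hessian is block diagonal for this splitting. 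On the $\mathbb{C}^{r-1}$-factor the linearized flow is $r-1$ copies of a rotation through an angle that a direct computation places slightly above $2\pi k(\ell)$ at $v_{j,\ell}^{+}$ (where $f'$ exceeds $4\pi k(\ell)$) and slightly below it at $v_{j,\ell}^{-}$, contributing $-(2k(\ell)+1)$ resp.\ $-(2k(\ell)-1)$ per factor. On the remaining complex line the linearized flow is a rotation through $\approx 2\pi k(\ell)$ composed with the shear $\begin{pmatrix}1&0\\-\ell f''(\ell)&1\end{pmatrix}$ coming from the Hessian of $f\circ L$ in the radial direction, further deformed by the $\ep g_f$-term; reading off the sign of the resulting ``extra quarter turn'' from $\mathrm{sgn}\,f''(\ell)$ and from whether the orbit is $v_{j,\ell}^{+}$ or $v_{j,\ell}^{-}$ yields the contributions $-(2k(\ell)+1),\,-2k(\ell),\,-2k(\ell),\,-(2k(\ell)-1)$ in the cases $(+,f''>0),(+,f''<0),(-,f''>0),(-,f''<0)$. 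Summing the vertical contributions, adding $\mu_{CZ}^{hor}$, and substituting into $\mu_{H^{\ep}}=n-\mu_{CZ}$ produces the four displayed formulas.

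The main obstacle is this last step: correctly accounting for the perturbation $\ep g_f(\phi_t^{-1}(\cdot))$ on the distinguished complex line. As one already sees from the phase analysis behind Proposition~\ref{orbits}, this perturbation averages to zero along the unperturbed rotation orbit, so one must expand to the order at which $f''(\ell)$, the sign of $\ep$, and the choice of $v^{\pm}$ enter, and then carefully track how the resulting $2\times 2$ symplectic path rotates relative to the fiberwise trivialization; comparable care fixes on which side of $2\pi k(\ell)$ the $\mathbb{C}^{r-1}$-rotation lies. Equivalently, one can organize this as a local Hamiltonian Morse--Bott computation: the unperturbed $1$-periodic orbits over $p_j$ near $\{L=\ell\}$ form a Morse--Bott family whose Robbin--Salamon index is recovered from the data above, and $v_{j,\ell}^{\pm}$ are the two nondegenerate orbits produced by the (highly non-generic) perturbation, the index shift being read off from the associated critical data of $g_f$ — this is the point of view developed in the appendix, which is in any case needed for Proposition~\ref{p1}. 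Once those signs are pinned down, the remainder is the bookkeeping above: additivity of $\mu_{CZ}$, the rotation-number count on $\mathbb{C}^{r-1}$, and the conversion $\mu_{H^{\ep}}=n-\mu_{CZ}$.
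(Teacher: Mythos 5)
The paper offers no written proof of this proposition---it is declared a routine calculation and left to the reader, with a pointer to Section 5.2.5 of \cite{GG1}---so there is strictly nothing to compare your argument against. Your outline is the natural computation and your bookkeeping is correct: using the fiberwise trivialization, block-diagonalizing the linearized flow into a horizontal $Sp(2m)$-block and a vertical $Sp(2r)$-block, additivity of $\mu_{CZ}$, the small-Morse-function convention giving $\mu_{CZ}^{hor}=\mathrm{ind}_h(p_j)-m$, the $r-1$ pure rotations on the Hermitian complement of $v_{j,\ell}^{\pm}$ contributing $-(2k(\ell)+1)$ resp.\ $-(2k(\ell)-1)$ per factor, and the conversion $\mu_{H^{\ep}}=n-\mu_{CZ}$, I checked that your four claimed distinguished-line contributions do reproduce every displayed formula and the gap of $2r-1$ between the two indices noted after the proposition.

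The one real deficiency is the step you yourself flag as ``the main obstacle'': the contributions $-(2k(\ell)+1),\;-2k(\ell),\;-2k(\ell),\;-(2k(\ell)-1)$ on the distinguished $2\times 2$ block are asserted, not derived. This is exactly the place where $\mathrm{sgn}\,f''(\ell)$ and the distinction between $v^{+}$ and $v^{-}$ enter, so it carries the whole content of the four-way case split; without it the rest is arithmetic with nothing to act on. To close it one needs either (a) the direct route you sketch---write out the perturbed $2\times 2$ symplectic path about the full rotation $e^{-2\pi i k(\ell)t}$ to the first order at which $f''(\ell)$ and $\ep g_f$ contribute, and read off $\mu_{CZ}$ from the crossing form at $t=1$, tracking on which side of the eigenvalue $1$ the perturbation resolves the degeneracy---or (b) the Morse--Bott route you allude to: view the $S^{2r-1}$ of unperturbed orbits over $p_j$ at $L=\ell$ as a clean orbit family whose Robbin--Salamon index depends on $\mathrm{sgn}\,f''(\ell)$ through the radial normal Hessian of $f\circ L$, identify $v^{\pm}$ as the two critical points (of indices $0$ and $2r-1$) of $\ep g_f$ restricted to this family, and apply the standard index-shift formula for breaking a Morse--Bott family. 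Either path is fine, but one of them must actually be carried out before the proof is complete.
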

(In particular, for each $\ell\in \mathcal{L}_f\setminus\{0\}$ the orbits $[\gamma_{j,\ell}^{\pm},w_0]$ have Maslov indices differing from each other by $2r-1$, as would be expected since they are the two orbits that remain from an $S^{2r-1}$-family of periodic orbits of $H$ after perturbing $H$ to $H^{\ep}$).
\subsection{Restrictions on Floer trajectories}
We will be needing some information about the Floer complexes $CF^{[a,b]}(H^{\ep})$ of perturbations $H^{\ep}$ of particular $\delta h$-Hamiltonians $H=f\circ L+\delta h\circ \pi$; in our application the length of the interval $[a,b]$ will be rather small.  Lemma \ref{toprest} below will be a considerable help in this direction; that result, in turn, will depend on the following:

\begin{lemma}\label{iscpct} Given $\beta>0$, there are  constants $e_0$ and $\eta_0$ with the following property.  Let $w\co (-1,2)\times (\mathbb{R}/\mathbb{Z})\to M$ be a  map 
such that \[ 
\quad\left\|\frac{\partial w}{\partial s}+J_0\frac{\partial w}{\partial t}\right\|_{C^0}<\eta_0,\] and \[ \int_{(-1,2)\times(\mathbb{R}/\mathbb{Z})}\left|\frac{\partial w}{\partial s}\right|_{J_{0}}^{2}dsdt<e_0.\]  Then the diameter of $w([0,1]\times (\mathbb{R}/\mathbb{Z}))$ is no larger than $\beta$.
\end{lemma}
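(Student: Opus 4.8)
The plan is to reduce the statement to a uniform interior gradient bound for maps satisfying a perturbed Cauchy--Riemann equation. First I would note that the hypothesis on $\bigl\|\frac{\partial w}{\partial s}+J_0\frac{\partial w}{\partial t}\bigr\|_{C^0}$ controls $\frac{\partial w}{\partial t}$ in terms of $\frac{\partial w}{\partial s}$: writing $\xi=\frac{\partial w}{\partial s}+J_0\frac{\partial w}{\partial t}$, one has $\frac{\partial w}{\partial t}=J_0\frac{\partial w}{\partial s}-J_0\xi$, and since $J_0$ is a $g_0$-isometry this gives $\bigl|\frac{\partial w}{\partial t}\bigr|\leq\bigl|\frac{\partial w}{\partial s}\bigr|+|\xi|$ pointwise, hence $|dw|^2\leq 3\bigl|\frac{\partial w}{\partial s}\bigr|_{J_0}^2+2\|\xi\|_{C^0}^2$. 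Combined with the energy hypothesis, this shows that on \emph{any} embedded Euclidean disc $B$ of radius $\tfrac14$ in $(-1,2)\times(\mathbb{R}/\mathbb{Z})$ one has $\int_B|dw|^2\leq 3e_0+\tfrac{\pi}{8}\eta_0^2$, a quantity tending to $0$ as $e_0,\eta_0\to 0$.

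The key input is the mean-value (``$\epsilon$-regularity'') inequality for the inhomogeneous equation $\frac{\partial v}{\partial s}+J_0\frac{\partial v}{\partial t}=\xi$: there exist constants $\hbar_0>0$ and $C_0>0$, depending only on $(M,\omega_0,J_0)$, such that if $v\co B\to M$ is defined on an embedded Euclidean disc $B$ of radius $\tfrac14$, satisfies $\|\xi\|_{C^0}\leq 1$, and has $\int_B|dv|^2\leq\hbar_0$, then the gradient at the centre $z_0$ of $B$ obeys $|dv(z_0)|^2\leq C_0\bigl(\int_B|dv|^2+\|\xi\|_{C^0}^2\bigr)$. This is the standard interior estimate for $J_0$-holomorphic curves (cf.\ the elliptic estimates of \cite{Si}), with the antiholomorphic part $\xi$ carried along as a $C^0$-bounded inhomogeneous term in the bootstrap; compactness of $M$ and continuity of $J_0$ make $\hbar_0$ and $C_0$ independent of the base point. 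The reason for working with discs of radius $\tfrac14$ is simply that such a disc, centred at any point of $[0,1]\times(\mathbb{R}/\mathbb{Z})$, is embedded (its $t$-extent being $\tfrac12<1$) and contained in $(-1,2)\times(\mathbb{R}/\mathbb{Z})$.

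Granting this, the lemma follows quickly. Let $D_0=\mathrm{diam}\bigl([0,1]\times(\mathbb{R}/\mathbb{Z})\bigr)$ in the flat metric, and choose $e_0,\eta_0>0$ small enough that $\eta_0\leq 1$, that $3e_0+\tfrac{\pi}{8}\eta_0^2\leq\hbar_0$, and that $C_0\bigl(3e_0+\tfrac{\pi}{8}\eta_0^2+\eta_0^2\bigr)\leq(\beta/D_0)^2$. If $w$ satisfies the hypotheses, then applying the mean-value inequality to the restriction of $w$ to the radius-$\tfrac14$ disc about an arbitrary $z_0\in[0,1]\times(\mathbb{R}/\mathbb{Z})$ gives $|dw(z_0)|\leq\beta/D_0$. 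Since $[0,1]\times(\mathbb{R}/\mathbb{Z})$ is connected and any two of its points are joined by a path of length at most $D_0$, integrating $|dw|$ along such a path yields $\mathrm{dist}_M\bigl(w(z_1),w(z_2)\bigr)\leq\beta$ for all $z_1,z_2$, i.e.\ $\mathrm{diam}\,w([0,1]\times(\mathbb{R}/\mathbb{Z}))\leq\beta$.

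The main obstacle is the second step: establishing the interior gradient estimate with constants that depend only on the fixed data $(M,\omega_0,J_0)$ and not on $w$. The homogeneous case is the classical mean-value inequality for pseudoholomorphic curves, and the only new point is that one must re-run (rather than merely quote) the rescaling/bootstrap argument with the antiholomorphic term present as an $L^\infty$-controlled inhomogeneity; one should also check that the energy threshold $\hbar_0$ can be taken uniform over $M$, which follows from compactness. Everything else is routine. (Alternatively, one could argue by contradiction: a sequence of counterexamples with $e_0,\eta_0\to 0$ either has bounded gradients, whence Arzel\`a--Ascoli produces a nonconstant zero-energy $J_0$-holomorphic limit, or develops a bubble of energy $\geq\hbar_0$; both possibilities are absurd. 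This route, however, uses the same analytic input.)
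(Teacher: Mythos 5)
Your main route hinges on the claim that if $\delbar_{J_0}v=\xi$ with $\|\xi\|_{C^0}\leq 1$ and $\int_B|dv|^2$ small, then $|dv(z_0)|^2\leq C_0\bigl(\int_B|dv|^2+\|\xi\|_{C^0}^2\bigr)$. This is false: a $C^0$ bound on the inhomogeneity of a first-order Cauchy--Riemann equation does \emph{not} give a $C^0$ bound on the gradient. The obstruction is the Beurling transform, which takes $L^\infty$ into $\mathrm{BMO}$ but not into $L^\infty$. Concretely, for $u_\ep(z)=z\log(|z|^2+\ep^2)^{1/2}$ on the unit disc one has $|\delbar u_\ep|=|z|^2/(2(|z|^2+\ep^2))\leq\tfrac12$ and bounded energy, yet $\partial_z u_\ep(0)=\log\ep\to-\infty$; rescaling by a small $\lambda$ makes both the energy and $\|\xi\|_{C^0}$ as small as you like while $|du_\ep(0)|/\bigl(\|du_\ep\|_{L^2}+\|\xi\|_{C^0}\bigr)$ is still unbounded. (Note this example has small \emph{diameter}, so it is consistent with the lemma but kills your gradient estimate.) The homogeneous mean-value inequality you cite from \cite{Si} or \cite{MS} relies essentially on differentiating the equation, which introduces $d\xi$ into the Bochner inequality; with only $\|\xi\|_{C^0}$ controlled there is no differential inequality for the energy density and the bootstrap you describe stalls at $C^\alpha$, $\alpha<1$.

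Your parenthetical ``alternative route'' (rescale/bubble or Arzel\`a--Ascoli) is in spirit the right argument, and is close to what the paper does, but it is not a minor variant that ``uses the same analytic input'': it requires a Gromov-type compactness theorem for almost complex structures that converge only in $C^0$, which is precisely what makes this lemma nontrivial. The paper sidesteps your estimate entirely by packaging $w_n$ as a graph $W_n(s,t)=(s,t,w_n(s,t))$, which is genuinely pseudoholomorphic for a perturbed almost complex structure $\tilde J_n$ on $(-1,2)\times(\mathbb{R}/\mathbb{Z})\times M$ with $\tilde J_n\to i\times J_0$ in $C^0$ only, and then invokes Theorem 1 and Corollary 1.3 of Ivashkovich--Shevchishin \cite{IS}, which are built exactly for $C^0$-convergent structures. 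That gives $L^{1,p}_{loc}$ hence $C^0$ convergence of $w_n$ to a genuine $J_0$-holomorphic $w_\infty$, to which the \emph{exact} holomorphic gradient estimate of \cite{MS} may then be applied. If you want to carry out your contradiction argument cleanly you should either cite \cite{IS} as the paper does, or separately justify gradient bounds away from finitely many bubble points for approximately holomorphic maps with $C^0$ error (which is not an off-the-shelf elliptic estimate).
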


\begin{proof}  Standard estimates (\emph{e.g.}, Lemma 4.3.1 of \cite{MS}) show that there are $\delta,C>0$ such that if $u\co (-1,2)\times\mathbb{R}/\mathbb{Z}\to M$ is a $J_0$-holomorphic map with the property that $\int_{(-1,2)\times\mathbb{R}/\mathbb{Z}}|du|^{2}<\delta$ then $\|du\|_{L^{\infty}([0,1]\times \mathbb{R}/\mathbb{Z})}\leq C \left(\int_{(-1,2)\times\mathbb{R}/\mathbb{Z}}|du|^{2}\right)^{1/2}$.  In view of this, we can choose a parameter $e'_0>0$ with the property that if $u$ is $J_0$-holomorphic and $\int_{(-1,2)\times\mathbb{R}/\mathbb{Z}}|du|^{2}\leq e'_0$ then $u([0,1]\times\mathbb{R}/\mathbb{Z})$ has diameter at most $\beta/2$; our parameter $e_0$ will be equal to the minimum of $e'_0$ and half the minimal energy of a nonconstant $J_0$-holomorphic sphere.

Given this choice of $e_0$, we now prove the result.  If the result were false, there would be a sequence $w_n\co (-1,2)\times \mathbb{R}/\mathbb{Z}\to M$ 
with 
$\left\|\frac{\partial w_n}{\partial s}+J_0\frac{\partial w_n}{\partial t}\right\|_{C^0}\to 0$ such that $\int_{(-1,2)\times(\mathbb{R}/\mathbb{Z})}\left|\frac{\partial w_n}{\partial s}\right|_{J_0}^{2}dsdt<e_0$ and each $w_n([0,1]\times (\mathbb{R}/\mathbb{Z}))$ has diameter larger than $\beta$.  Let $\alpha_{n,s,t}$ be a sequence of $(-1,2)\times(\mathbb{R}/\mathbb{Z})$-parametrized  vector fields on $M$, varying continuously in $(s,t)\in (-1,2)\times(\mathbb{R}/\mathbb{Z})$, with the property that $\alpha_{n,s,t}(w_n(s,t))=\left(\frac{\partial w_n}{\partial s}+J_0\frac{\partial w_n}{\partial t}\right)(s,t)$; a straightforward patching argument shows that 
 we can arrange that $\|\alpha_{n,s,t}\|_{C^0}\to 0$ as $n\to\infty$, uniformly in $s$ and $t$.  

Define almost complex structures $\tilde{J}_n$ on $(-1,2)\times(\mathbb{R}/\mathbb{Z})\times M$ by setting $\tilde{J}_n\partial_s=\partial_t-J_0\alpha_{n,s,t}$, $\tilde{J}_n\partial_t=-\partial_s-\alpha_{n,s,t}$, and $\tilde{J}_n|_{T(\{(s,t)\}\times M)}=J_0$.  Define \[ W_n\co (-1,2)\times(\mathbb{R}/\mathbb{Z})\to (-1,2)\times(\mathbb{R}/\mathbb{Z})\times M\mbox{ by }W_n(s,t)=(s,t,w_n(s,t)).\]  The almost complex structures $\tilde{J}_n$ have been constructed so as to ensure that the $W_n$ are $\tilde{J}_n$-holomorphic maps.  Now where $i$ is the standard almost complex structure on $(-1,2)\times (\mathbb{R}/\mathbb{Z})$, the $\tilde{J}_n$ converge in $C^0$-norm to the product almost complex structure $i\times J_0$.  Hence, by Theorem 1 of \cite{IS}, after passing to a subsequence the $W_n$ converge, at least modulo bubbling, to a $i\times J_0$-holomorphic curve. 
Now the fact that the $W_n$ have form $(s,t)\mapsto (s,t,w_n(s,t))$ implies that the limiting bubble tree has a principal component of form $(s,t)\mapsto (s,t,w_{\infty}(s,t))$ (having energy, as measured by the symplectic form $ds\wedge dt+\omega_0$ and the almost complex structure $i\times J_0$ on $(-1,2)\times(\mathbb{R}/\mathbb{Z})\to (-1,2)\times(\mathbb{R}/\mathbb{Z})\times M$,   equal to  $Area((-1,2)\times(\mathbb{R}/\mathbb{Z}))+\int_{(-1,2)\times\mathbb{R}/\mathbb{Z}}|dw_{\infty}|^{2}$), with 
 any bubbles given by $J_0$-holomorphic spheres in fibers $\{(s_0,t_0)\}\times M$.  Meanwhile we have \[ \limsup\int_{(-1,2)\times (\mathbb{R}/\mathbb{Z})}|dW_n|^{2}_{\tilde{J}_n}\leq Area((-1,2)\times(\mathbb{R}/\mathbb{Z}))+e_0\] by hypothesis, so the fact that $e_0$ is strictly less than the minimal energy of a nonconstant $J_0$-holomorphic sphere implies that no bubbles can appear in the limit.  As such, we in fact have $w_n\to w_{\infty}$ in $L^{1,p}_{loc}$ for each $p<\infty$ (by Corollary 1.3 of \cite{IS}), and therefore also in $C^{\alpha}([0,1]\times (\mathbb{R}/\mathbb{Z});M)$ for each $0<\alpha<1$ by the Sobolev lemma.  By Fatou's Lemma, we have that \[\int_{(-1,2)\times(\mathbb{R}/\mathbb{Z})}|dw_{\infty}|^{2}_{J_0}\leq \liminf\int_{(-1,2)\times(\mathbb{R}/\mathbb{Z})}|dw_n|^{2}_{J_0}\leq e_0,\] and therefore that $w_{\infty}([0,1]\times (\mathbb{R}/\mathbb{Z}))$ has diameter at most $\beta/2$.  So since $w_n\to w_{\infty}$ in $C^0$ it follows that, for sufficiently large $n$, $w_n([0,1]\times(\mathbb{R}/\mathbb{Z}))$ has diameter at most $\beta$, in contradiction with the assumption that all $w_n([0,1]\times(\mathbb{R}/\mathbb{Z}))$ had diameter larger than $\beta$.  This contradiction proves the lemma.
\end{proof}

\begin{lemma}\label{toprest}  There are constants $\eta_1, e_1>0$ with the following property. Let $(H^s,\bar{J})$ be a monotone homotopy  from $(H^-,\bar{J})$ to $(H^{+},\bar{J})$, such that there are functions $f_s\co\mathbb{R}\to\mathbb{R}$ and $\rho_s\co 
(\mathbb{R}/\mathbb{Z})\times E(R)\to \mathbb{R}$, with \[ \|\rho_s\|_{C^1}<\eta_1 \mbox{ and } H^s(t,x)=f_s(L(x))+\rho_s(t,x).\]  Assume furthermore that $H^+$ and $H^-$ are both nondegenerate Hamiltonians, with the property that each one-periodic orbit $\gamma_{\pm}$ of $X_{H^{\pm}}$ lies in just one fiber (depending on $\gamma_{\pm}$) of the projection $\pi\co E(R)\to M$, so that in particular each $\gamma_{\pm}$ has a fiberwise capping $(w_0)_{\gamma_{\pm}}\co D^2\to E(R)$. 
Suppose that $u\co\mathbb{R}\times (\mathbb{R}/\mathbb{Z})\to E(R)$ is a solution to the Floer equation \begin{equation} \label{lemf}\frac{\partial u}{\partial s}+\bar{J}\left(\frac{\partial u}{\partial t}-X_{H^s}(t,u(s,t))\right)=0,\end{equation} such that $u(s,\cdot)\to \gamma_{\pm}$ as $s\to \pm\infty$.  Suppose that \[ [\gamma_+,(w_0)_{\gamma_-}\#u]=[\gamma_+,(w_0)_{\gamma_+}\#A].\] (where $A\in\frac{\pi_2(E(R))}{\ker\langle c_1(TE(R)),\cdot\rangle\cap\ker\langle [\omega],\cdot\rangle}$ and the notation signifies equivalence in $\tilde{\mathcal{L}}$).  Then \[ \mbox{ either }A=0\mbox{ or }\int_{\mathbb{R}\times (\mathbb{R}/\mathbb{Z})}\left|\frac{\partial u}{\partial s}\right|^2>e_1.\]  

\end{lemma}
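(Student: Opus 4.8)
The plan is to push everything down to the base $M$ via the bundle projection $\pi\co E(R)\to M$, using that the Hamiltonian vector fields $X_{H^s}$ are nearly vertical, and then to show that a nearly $J_0$-holomorphic cylinder in $M$ with small enough energy and constant asymptotics must be nullhomotopic. To set up constants, fix $\beta>0$ with $3\beta<\mathrm{injrad}(M,g_0)$ and let $\eta_0,e_0$ be the numbers produced by Lemma \ref{iscpct} for this $\beta$. The standing assumption $R^2\leq\frac12\|F_A\|_\infty^{-1}$, together with properties (i)--(iii) of $d\theta$, makes $T^{hor}E$ and $T^{vt}E$ pointwise $g$-orthogonal and gives $|\pi_*v|_{g_0}^2\leq 2|v|_{g}^2$ for every horizontal $v\in TE(R)$; since $\iota_{X_{\rho_s}}\omega=d\rho_s(t,\cdot)$, there is therefore a constant $C$ depending only on $\omega$ and $g_0$ with $|\pi_*X_{\rho_s}(t,x)|_{g_0}\leq C\|\rho_s\|_{C^1}$. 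Declare $e_1<e_0/2$ and $\eta_1<\eta_0/C$; these are the constants in the statement.

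Now assume $\int_{\mathbb{R}\times(\mathbb{R}/\mathbb{Z})}|\partial u/\partial s|^2\leq e_1$; the goal is to prove $A=0$. Put $w=\pi\circ u$. Since $X_{f_s\circ L}(x)=-\frac{i}{2}f_s'(L(x))x$ is vertical and $\bar J$ preserves the horizontal/vertical splitting while lifting $J_0$ on the horizontal part (so $\pi_*\circ\bar J=J_0\circ\pi_*$), applying $\pi_*$ to (\ref{lemf}) yields
\[
\frac{\partial w}{\partial s}+J_0\frac{\partial w}{\partial t}=J_0\,\pi_*X_{\rho_s}\bigl(t,u(s,t)\bigr),
\]
so that $\|\partial w/\partial s+J_0\partial w/\partial t\|_{C^0}\leq C\|\rho_s\|_{C^1}<\eta_0$. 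Moreover $\partial w/\partial s=\pi_*(\partial u/\partial s)$ equals the projection of the horizontal part of $\partial u/\partial s$, so $|\partial w/\partial s|_{J_0}^2\leq 2|\partial u/\partial s|^2$ and hence $\int_{\mathbb{R}\times(\mathbb{R}/\mathbb{Z})}|\partial w/\partial s|_{J_0}^2\leq 2e_1<e_0$. For each $k\in\mathbb{Z}$ the map $(s,t)\mapsto w(s+k,t)$ on $(-1,2)\times(\mathbb{R}/\mathbb{Z})$ then satisfies the hypotheses of Lemma \ref{iscpct}, so every slice $w(\{s\}\times(\mathbb{R}/\mathbb{Z}))$ has diameter at most $\beta$.

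It remains to deduce $A=0$. Because $\gamma_\pm$ lies in a single fiber, over a point $p_\pm\in M$, the fiberwise capping $(w_0)_{\gamma_\pm}$ projects to the constant map at $p_\pm$, while $w(s,\cdot)\to p_\pm$ uniformly as $s\to\pm\infty$; hence the sphere $(w_0)_{\gamma_-}\#u\#\overline{(w_0)_{\gamma_+}}$, which represents a lift $\tilde A\in\pi_2(E(R))$ of $A$, projects under $\pi$ to the sphere $\widehat w\in\pi_2(M)$ obtained by collapsing the two constant ends of $w$ to points. Since $E(R)$ deformation retracts onto the zero section $M$, the map $\pi_*\co\pi_2(E(R))\to\pi_2(M)$ is an isomorphism, so it suffices to show $[\widehat w]=0$. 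But each slice $w(\{s\}\times(\mathbb{R}/\mathbb{Z}))$, having diameter at most $\beta<\mathrm{injrad}(M,g_0)$, bounds a geodesically coned disc contained in the $\beta$-ball about $w(s,0)$; gluing these discs in along the slices $\{k\}\times(\mathbb{R}/\mathbb{Z})$ exhibits $\widehat w$, up to homotopy, as a telescoping sum of spheres each contained in a geodesic ball of radius $3\beta$, and each such sphere is nullhomotopic since that ball is contractible. (The sum is legitimate because the slices converge to $p_-$ and $p_+$, so all but finitely many of the spheres lie in arbitrarily small balls about $p_\pm$.) Therefore $[\widehat w]=0$, so $\tilde A=0$, and thus $A=0$.

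The step I expect to be the main obstacle is the last one. Lemma \ref{iscpct} only bounds the image of $w$ over compact subcylinders, and the full image of $w$ may genuinely be large --- $p_-$ and $p_+$ can be far apart and $w$ may wander over much of $M$ --- so one cannot simply confine $\widehat w$ to a contractible set. The right move is instead to use the smallness of each individual period-slice to fill in and telescope; making this homotopy-theoretic reduction precise, and verifying that the (locally finite) sum of spheres behaves as claimed using only the uniform convergence $w(s,\cdot)\to p_\pm$, is the real content. The remaining ingredients --- that $w=\pi\circ u$ is $C\|\rho_s\|_{C^1}$-close to $J_0$-holomorphic and has energy bounded by that of $u$, uniformly over $E(R)$ --- are routine once the $g$-orthogonality of $T^{hor}E$ and $T^{vt}E$ and the comparison $|\pi_*v|_{g_0}^2\leq 2|v|_g^2$ are in hand, which is exactly where $R^2\leq\frac12\|F_A\|_\infty^{-1}$ and $\|\rho_s\|_{C^1}<\eta_1$ enter.
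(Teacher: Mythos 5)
Your proof follows the paper's approach almost step for step through the main part of the argument: apply $\pi_*$ to the Floer equation using $\pi_*\circ\bar J=J_0\circ\pi_*$ and the verticality of $X_{f_s\circ L}$, bound $\|\partial w/\partial s+J_0\partial w/\partial t\|_{C^0}$ by $\|\rho_s\|_{C^1}$ and $\int|\partial w/\partial s|^2_{J_0}$ by the energy of $u$, and then invoke Lemma \ref{iscpct} on translates of $w$ to get a uniform diameter bound $\beta$ on every slice $w(\{s\}\times(\mathbb{R}/\mathbb{Z}))$. The metric comparison $|\pi_*v|_{g_0}^2\leq 2|v|_g^2$ for horizontal $v$, used to get $e_1$ and $\eta_1$, is exactly the estimate the paper implicitly invokes when it says ``Choose $\eta_1$ so that\ldots implies $\|J_0\pi_*X_{\rho_s}(u(s,t))\|_{C^0}<\eta_0$'' and ``Choose $e_1$ so that\ldots implies $\int|\partial(\pi\circ u)/\partial s|^2_{J_0}<e_0$.''

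The one place you genuinely diverge is the final topological step, and here the paper's route is cleaner than the telescoping decomposition you anticipate being the ``real content.'' Since each slice has diameter $\beta<\mathrm{injrad}(M,g_0)$, one can write $\overline{\pi\circ u}(s,t)=\exp_{\overline{\pi\circ u}(s,0)}(v(s,t))$ with $|v(s,t)|<\beta$ and $v(\pm\infty,\cdot)=0$, and then scale: $\overline{\pi\circ u}_\sigma(s,t)=\exp_{\overline{\pi\circ u}(s,0)}(\sigma v(s,t))$ for $\sigma\in[0,1]$ is a single global homotopy from $\widehat w$ to the map $(s,t)\mapsto\overline{\pi\circ u}(s,0)$, which factors through the contractible interval $[-\infty,\infty]$ and is therefore nullhomotopic. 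This entirely avoids the infinite telescoping sum of coned-off spheres and the accompanying worry about when such a sum ``behaves as claimed.'' Your argument can also be made rigorous (cap at $\pm N$, note $[S_N]\to[\widehat w]$ in the discrete group $\pi_2(M)$, and that each finite telescoping sum vanishes), but the scaling homotopy is the more direct move and is what the paper does; the piece you flagged as the main obstacle in fact has a one-line resolution.
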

\begin{remark} We emphasize that the constants $\eta_1,e_1$ are independent of the functions $f_s$.\end{remark}
\begin{proof}
Applying the linearization $\pi_*$ of the bundle map $\pi\co E(R)\to M$ to (\ref{lemf}) and using that $\pi_*\circ \bar{J}=J_0\circ \pi_*$ and that each $X_{f_s\circ L}$ is a vertical vector field, we obtain that \[ \frac{\partial}{\partial s}(\pi\circ u)+J_0\frac{\partial}{\partial t}(\pi\circ u)=J_0\pi_*X_{\rho_s}(u(s,t)).\]

Let $\beta$ be one half of the injectivity radius of the Riemannian manifold $(M,g_0)$.  This determines constants $\eta_0,e_0$ as in Lemma \ref{iscpct}.  Choose $\eta_1>0$ so that the assumption that $\|\rho_s\|_{C^1}<\eta_1$ 
implies that $\|J_0\pi_*X_{\rho_s}(u(s,t))\|_{C^0}<\eta_0$.
Choose $e_1>0$ so that the assumption that $\int_{\mathbb{R}\times (\mathbb{R}/\mathbb{Z})}\left|\frac{\partial u}{\partial s}\right|^2\leq e_1$  
implies that 
$\int_{\mathbb{R}\times(\mathbb{R}/\mathbb{Z})}\left|\frac{\partial (\pi\circ u)}{\partial s}\right|_{J_0}^{2}dsdt<e_0$.  Then for any $s_0\in\mathbb{R}$, Lemma \ref{iscpct} applied to the map $w\co (-1,2)\times (\mathbb{R}/\mathbb{Z})\to M$ defined by $w(s,t)=\pi\circ u(s-s_0,t)$ shows that, if $\|\rho_s\|_{C^1}<\eta_1$ 
and $\int_{\mathbb{R}\times (\mathbb{R}/\mathbb{Z})}\left|\frac{\partial u}{\partial s}\right|^2\leq e_1$, then the image of $\pi\circ u|_{[s_0,s_0+1]\times (\mathbb{R}/\mathbb{Z})}$ has diameter at most $\beta$.  In particular, the image of every circle $\{s\}\times(\mathbb{R}/\mathbb{Z})$ under $\pi\circ u$ has diameter at most $\beta$.  
  
Now the hypothesis on $u$ implies that $\pi\circ u(s,\cdot)$ converges to the constant map at some point $p_{\pm}\in M$ as $s\to \pm\infty$.  So identifying $S^2$ with \[ \frac{[-\infty,\infty]\times (\mathbb{R}/\mathbb{Z})}{(\pm\infty,t)\sim (\pm\infty,0)},\] $\pi\circ u$ extends to a map $\overline{\pi \circ u}\co S^2\to M$; this sphere is the image under $\pi$ of the sphere obtained by attaching the fiberwise cappings of $\gamma_{\pm}$ to the image of $u\co \mathbb{R}\times(\mathbb{R}/\mathbb{Z})\to E(R)$ at $s=\pm\infty$.  Since the diameter of the image of each $\{s\}\times(\mathbb{R}/\mathbb{Z})$ is less than the injectivity radius of $M$, we can write $\overline{\pi\circ u}(s,t)=exp_{\overline{\pi\circ u}(s,0)}(v(s,t))$ for some smoothly-varying tangent vectors $v(s,t)$ at $\overline{\pi\circ u}(s,t)$, with $v(\pm\infty,t)=0$ and $|v(s,t)|<\beta$.  Then writing $\overline{\pi\circ u}_{\sigma}(s,t)=exp_{\pi\circ u(s,0)}(\sigma v(s,t))$ ($0\leq \sigma\leq 1$) gives a homotopy of $\overline{\pi\circ u}$ to 
 the map $(s,t)\to \overline{\pi\circ u}(s,0)$, which is obviously nullhomotopic.  Thus $\pi\circ \bar{u}$ must have been nullhomotopic.  So since $\pi\circ \bar{u}$ is the image of the sphere in $E(R)$ obtained by gluing the fiberwise cappings of $\gamma_{\pm}$ to the ends of $u$, and since $\pi\co E(R)\to M$ induces an isomorphism on $\pi_2$, this latter sphere must be nullhomotopic.  In particular, in the notation of the lemma, this implies that $A=0$.  
\end{proof}

\begin{cor}\label{bigcor}  There are $\delta_1, e_1>0$ with the following property.  Let $H^{\ep}$ be a perturbation of a $\delta h$-Hamiltonian $H=f\circ L+\delta h\circ\pi$ as constructed before Proposition \ref{orbits} with $0<\delta<\delta_1$ and $\ep$ sufficiently small, and let $H^{s,\ep}$ $(-\infty\leq s\leq \infty)$ be a path of such perturbations of $\delta h$-Hamiltonians $f_s\circ L+\delta h\circ \pi$.  Assume that $f'(L),f'_s(L)$ are equal to $2\pi$ for $L\geq \alpha_0$, so that the Floer homologies $HF^{[a,b]}_{*}(H^{\ep})$, $HF^{[a,b]}_{*}(H^{\pm\infty,\ep})$ are defined for sufficiently small intervals $[a,b]$.  Suppose also that $a,b\in\mathbb{R}$ with \[ a<b<a+e_1.\]  Then \begin{itemize} \item For generic families of almost complex structures $J_t$ sufficiently close to $\bar{J}$, the boundary operator $\partial_{H,J_t}\co CF^{[a,b]}_{*}(H^{\ep})\to CF^{[a,b]}_{*}(H^{\ep})$ has the form \[ \partial_{H,J_t}[\gamma,(w_0)_{\gamma}\#A]=\sum_{\gamma'}a_{\gamma\gamma'}[\gamma',(w_0)_{\gamma'}\#A]\] for some numbers $a_{\gamma\gamma'}$.  \item If $(H^{s,\ep},J_{s,t})$ is a regular monotone homotopy with $J_{s,t}$ sufficiently $C^2$-close to the constant path at $\bar{J}$, then the chain map $\Phi_{H^{s,\ep},J_{s,t}}\co CF^{[a,b]}_{*}(H^{-\infty,\ep})\to CF^{[a,b]}_{*}(H^{\infty,\ep})$ has the form \[ \Phi_{H^{s,\ep},J_{s,t}}[\gamma_-,(w_0)_{\gamma_-}\#A]=\sum_{\gamma_+}a_{\gamma_-\gamma_+}[\gamma_+,(w_0)_{\gamma_+}\#A]\]
\end{itemize}
\end{cor}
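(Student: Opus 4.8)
The plan is to obtain both assertions as immediate consequences of Lemma \ref{toprest}, once one checks that the Hamiltonians in question satisfy its hypotheses and then tracks sphere classes through the gluing of cappings. First I would verify the structural hypothesis: a perturbation $H^{\ep}$ of a $\delta h$-Hamiltonian $H=f\circ L+\delta h\circ\pi$ (as constructed before Proposition \ref{orbits}) may be written $H^{\ep}(t,x)=f(L(x))+\rho(t,x)$, where $\rho(t,x)=\delta h(\pi(x))+\ep g_f(\phi_t^{-1}(x))$, the vector field $X_{f\circ L}$ is vertical (it equals $-\tfrac{i}{2}f'(L(x))\,x$), and by Proposition \ref{orbits} every $1$-periodic orbit of $X_{H^{\ep}}$ lies in a single fibre of $\pi$ and hence carries a fibrewise capping. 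The same holds, uniformly in $s$, for each $H^{s,\ep}$ along the given homotopy, the $C^1$-size of the perturbing term varying continuously over the compact parameter space $s\in[-\infty,\infty]$. What remains is to arrange $\|\rho\|_{C^1}<\eta_1$, where $\eta_1$ is the constant of Lemma \ref{toprest}: the term $\delta h\circ\pi$ has $C^1$-norm at most $\delta\|h\circ\pi\|_{C^1}$, which is $<\eta_1/2$ as soon as $\delta<\delta_1:=\min\{\delta_0,\eta_1/(2\|h\circ\pi\|_{C^1})\}$, and then, with $f$ (hence $g_f$ and the flow $\phi_t$) fixed, the term $\ep g_f\circ\phi_t^{-1}$ has $C^1$-norm $\ep\,\|g_f\circ\phi_t^{-1}\|_{C^1}<\eta_1/2$ once $\ep$ is small enough. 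I would take $e_1$ to be the constant of Lemma \ref{toprest}, shrunk if necessary so that an interval with $b-a<e_1$ is ``sufficiently small'' in the sense of Section 2.2 (so the Floer complexes are defined).

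For the boundary operator, suppose $u\colon\mathbb{R}\times(\mathbb{R}/\mathbb{Z})\to E(R)$ is a trajectory counted by the matrix element $\langle\partial_{H,J_t}[\gamma,(w_0)_{\gamma}\# A],[\gamma',w']\rangle$. Then $w'\sim (w_0)_{\gamma}\# A\# u\sim (w_0)_{\gamma}\# u\# A$ in $\widetilde{\mathcal{L}}$, since a sphere class may be slid freely along a capping, and
\[ Energy(u)=\mathcal{A}_H([\gamma,(w_0)_{\gamma}\# A])-\mathcal{A}_H([\gamma',w'])\leq b-a<e_1 \]
because both are generators of $CF^{[a,b]}(H^{\ep})$. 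Applying Lemma \ref{toprest} to $u$ (with the constant homotopy $H^s\equiv H^{\ep}$) gives $(w_0)_{\gamma}\# u\sim (w_0)_{\gamma'}\# A_0$ with $A_0=0$, whence $w'\sim (w_0)_{\gamma'}\# A$; this is exactly the claimed form. Moreover the count of such $u$ depends only on the pair $(\gamma,\gamma')$ and the index drop, not on $A$ (the condition ``$(w_0)_{\gamma}\# u\sim(w_0)_{\gamma'}$'' being $A$-free), so the coefficients $a_{\gamma\gamma'}$ are as stated. For the chain map the argument is identical, using the homotopy energy identity (\ref{htopyenergy}): a solution $u$ of (\ref{htopyeqn}) joining generators of $CF^{[a,b]}(H^{-\infty,\ep})$ and $CF^{[a,b]}(H^{\infty,\ep})$ satisfies $Energy(u)\leq \mathcal{A}_{H^{-\infty,\ep}}([\gamma_-,w_-])-\mathcal{A}_{H^{\infty,\ep}}([\gamma_+,w_+])\leq b-a<e_1$ because $\partial_s H^s\geq 0$, so Lemma \ref{toprest} applied to the genuine homotopy $H^{s,\ep}$ again forces the sphere class to vanish, and the same sphere-sliding yields $\Phi_{H^{s,\ep},J_{s,t}}[\gamma_-,(w_0)_{\gamma_-}\# A]=\sum a_{\gamma_-\gamma_+}[\gamma_+,(w_0)_{\gamma_+}\# A]$.

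The one point that needs a little care — and the only one I expect to cause any trouble — is that Lemma \ref{toprest} is stated for the unperturbed $\bar{J}$, whereas the corollary permits $J_t$ (resp.\ $J_{s,t}$) in a $C^2$-neighbourhood of $\bar{J}$. Here I would note that the proof of Lemma \ref{toprest} invokes $\bar{J}$ only through the identity $\pi_*\circ\bar{J}=J_0\circ\pi_*$ together with the verticality of $X_{f_s\circ L}$. For $J_t$ sufficiently $C^2$-close to $\bar{J}$ the composite $\pi_*\circ J_t$ is $C^0$-close to $J_0\circ\pi_*$, and since the relevant trajectories stay in the compact region $E(3\alpha)$ by Theorem \ref{c0} (where the vertical field $X_{f_s\circ L}$ is bounded, $f_s$ being a fixed smooth function), applying $\pi_*$ to the Floer equation still expresses $\pi\circ u$ as a solution of a Cauchy--Riemann equation $\partial_s(\pi\circ u)+J_0\,\partial_t(\pi\circ u)=\xi(s,t)$ whose inhomogeneous term $\xi$ has $\|\xi\|_{C^0}$ bounded by a constant times $\|\rho_s\|_{C^1}+\|J_t-\bar{J}\|_{C^0}$; after shrinking $\eta_1$ and demanding that $J_t$ be close enough to $\bar{J}$, Lemma \ref{iscpct} applies verbatim and the remainder of the argument of Lemma \ref{toprest} goes through unchanged. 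Apart from this robustness check, the proof is purely the bookkeeping of cappings and sphere classes described above.
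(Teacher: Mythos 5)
Your reduction of the corollary to Lemma \ref{toprest} — using the action window to force the relevant boundary and continuation trajectories to have energy below $e_1$, and choosing $\delta_1$ and then $\ep$ small enough that the perturbing term $\rho_s(t,x)=\delta h(\pi(x))+\ep g_{f_s}(\phi_t^{-1}(x))$ has $C^1$-norm below $\eta_1$ — is exactly the paper's strategy, and your bookkeeping of sphere classes through the gluing $(w_0)_\gamma\#A\#u$ matches what the paper does. You also correctly isolate the one real delicacy: Lemma \ref{toprest} is stated and proved for the nongeneric $\bar{J}$, while the corollary concerns generic $J_t$ (resp.\ $J_{s,t}$) near $\bar{J}$.

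However, your proposed resolution of that point has a gap. After applying $\pi_*$ to the Floer equation with $J_t\neq\bar{J}$, the inhomogeneous term you call $\xi$ contains, in addition to $J_0\pi_*X_{\rho_s}$, a contribution of the form $-\pi_*\bigl(J_t-\bar{J}\bigr)\bigl(\tfrac{\partial u}{\partial t}-X_{H^s}\bigr)$, whose pointwise size is of order $\|J_t-\bar{J}\|_{C^0}\cdot\bigl|\tfrac{\partial u}{\partial s}\bigr|$. The factor $\bigl|\tfrac{\partial u}{\partial s}\bigr|$ is controlled only in $L^2$ by the energy; the $C^0$ confinement of $u$ to $E(3\alpha)$ coming from Theorem \ref{c0} gives no pointwise derivative control. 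Thus the bound you claim — $\|\xi\|_{C^0}\lesssim\|\rho_s\|_{C^1}+\|J_t-\bar{J}\|_{C^0}$ — is not established, and Lemma \ref{iscpct} genuinely needs a $C^0$ bound on the inhomogeneity. One could repair this by first proving uniform pointwise derivative bounds for low-energy, bubble-free Floer trajectories (via elliptic bootstrap and $e_1<\hbar$), but that is itself a compactness statement. The paper takes a cleaner route that avoids re-examining the proof of Lemma \ref{toprest} at all: if the corollary failed for a sequence of almost complex structures $J_t^n\to\bar{J}$ (or $J_{s,t}^n\to\bar{J}$), Gromov--Floer compactness applied to the offending trajectories would produce a solution of (\ref{lemf}) — possibly just one piece of a broken trajectory — with energy below $e_1$ and nonzero sphere class $B$, contradicting Lemma \ref{toprest} directly. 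You should replace your robustness check with this limiting argument, or else supply the missing a priori derivative estimate.
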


\begin{proof} \emph{A priori}, the maps $\partial_{H,J_t}$ and $\Phi_{H^{s,\ep},J_{s,t}}$ have the form \[[\gamma,(w_0)_{\gamma}\#A]\mapsto \sum_{\gamma',B}a_{\gamma\gamma',B}[\gamma',(w_0)_{\gamma'}\#A\#B],\] where $a_{\gamma\gamma',B}$ counts cylinders $u$ satisfying the appropriate equation with $[\gamma',(w_0)_{\gamma}\#u]=[\gamma',(w_0)_{\gamma'}\#B]$; the content of the corollary is that the only nonzero 
$a_{\gamma\gamma',B}$ are those with $B=0$.  Since our restriction to the action window $[a,b]$ forces the $u$ being considered to have energy less than $e_1$, this is essentially an immediate consequence of Lemma \ref{toprest} (with the $H^s$ of Lemma \ref{toprest} set equal to $H^{\ep}$ independently of $s$ for the first part and to $H^{s,\ep}$ for the second part), except that Lemma \ref{toprest} concerned solutions to analogues of the Floer equations used to define $\partial_{H,J_t}$ and $\Phi_{H^{s,\ep},J_{s,t}}$ with $J_t$ and $J_{s,t}$ replaced by the nongeneric almost complex structure $\bar{J}$.  However, if our corollary were false then applying Gromov compactness to solutions of the relevant Floer equation using almost complex structures equal to $J_{t}^{n}$ or $J_{s,t}^{n}$ where $J_{t}^{n},J_{s,t}^{n}\to \bar{J}$ would yield a solution $u$ (possibly just one piece of a broken trajectory) to (\ref{lemf}), 
whose energy and topology would contradict Lemma \ref{toprest}.
\end{proof}

\section{Detecting periodic orbits}

We now turn to the proof of Theorem \ref{main2}.  Throughout this section, we \textbf{assume that either $c_1(TE(R))=0$ mod torsion or else that $h$ has no critical points of index $2m-1$}. Our strategy is to some extent modeled on that used in Section 6 of \cite{GG2} to prove the result in the spherically rational case, though of course the possible irrationality of the symplectic form will introduce additional subtleties.  Let $K\co E(R)\to\mathbb{R}$ be an autonomous Hamiltonian on $E(R)$ which attains a Morse--Bott nondegenerate minimum (say equal to $0$) along the zero section.  After possibly shrinking $R$, this implies that there are constants $C_1,C_2>0$ such that \begin{align*} C_1 L(x)&\leq K(x)\leq C_2 L(x),
\\ C_1L(x)&\leq dL_x(\nabla K(x))\leq C_2 L(x)\quad (x\in E(R));\end{align*} in particular, the only critical points of $K$ in $E(R)$ are the points on the zero section.  

Our intention is to produce a periodic orbit for $X_K$ on an energy level in an arbitrary open interval $(3\rho-\beta,3\rho+\beta)$ where $\rho>0$ is sufficiently small and $\beta\ll \rho$.   Put \[ C(\rho)=\frac{30\pi}{C_1}\rho .\]

Throughout the following, $\rho$ will be taken small enough that $\frac{30\rho}{C_1}<\alpha_0$ and $C(\rho)<\min\{e_1,D\alpha_{0}^{4}\}$, where $D$ and $\alpha_0$ are as in Theorem \ref{c0} and $e_1$ is as in Corollary \ref{bigcor}.

Where $\eta>0$ is a (small) real number to be specified later, let $g_{\rho,\beta}\co [0,\infty)\to [0,\infty)$ be a smooth function with the following properties: \begin{itemize} \item $g'_{\rho,\beta}(s)>0$ for all $s$;
\item $\eta/2<g'_{\rho,\beta}(s)<\eta$ if $s\notin [3\rho-\beta,3\rho+\beta]$;
\item $g(0)=0$ and $g(3\rho+\beta)=C(\rho)$.\end{itemize}

Define $H_0=g_{\rho,\beta}\circ K$.  Thus $\gamma\co [0,1]\to E(R)$ is a $1$-periodic orbit of $X_{H_0}$ if and only if $t\mapsto \gamma\left(\frac{t}{g_{\rho,\beta}'(\gamma(0))}\right)$ defines a $g_{\rho,\beta}'(\gamma(0))$-periodic orbit of $X_K$.  One property that the still-to-be-specified parameter $\eta$ will have is that $X_K$ has no nonconstant periodic orbits of period at most $2\eta$ (such an $\eta$, depending only on the $C^2$-norm of $K$, exists by the Yorke estimate \cite{Y}), so this means that any nonconstant $1$-periodic orbit of $X_{H_0}$ will correspond to a periodic orbit of $X_K$ at some energy level in $(3\rho-\beta,3\rho+\beta)$.

Introduce two smooth strictly increasing functions $f_a,f_b\co [0,\infty)\to [0,\infty)$, depending on $\rho$ but not on $\beta$, having the following properties.  (see Figure 1; `$b$' stands for ``below,'' and `$a$' for ``above''):  \begin{itemize}  \item For $s\in [0,\rho/C_2]$, $f_a(s)=C_2\eta s$; \item For $s\in [0,7\rho/(2C_1)]$, $f_b(s)=C_1\eta s/2$. \item $f_a\left(\frac{2\rho}{C_2}\right)=f_b\left(\frac{4\rho}{C_1}+\frac{\rho}{C_2}\right)=C(\rho)$.
\item $f''_a(s)>0$ for $\frac{\rho}{C_2}<s<\frac{4\rho}{3C_2}$, $f''_a(s)<0$ for $\frac{5\rho}{3C_2}<s<\frac{2\rho}{C_2}$. \item For $\frac{4\rho}{3C_2}\leq s\leq \frac{5\rho}{3C_2}$, $f'_a(s)$ is not an integer multiple of $4\pi$.\item
For $4\rho/C_1<s<4\rho/C_1+\rho/C_2$ we have $f_b(s)=f_a\left(s-\left(\frac{4\rho}{C_1}-\frac{\rho}{C_2}\right)\right)$
\item For $2\rho/C_2<s<20\rho/C_1$ we have $0<f'_a(s)\leq C_2\eta$, and for  $4\rho/C_1+\rho/C_2<s<20\rho/C_1$ we have $0<f'_b(s)\leq C
_2\eta$.
\item For $5\rho/(2C_2)<s<10\rho/C_1$ we have $f'_a(s)=C_2\eta$, and for $4\rho/C_1+\rho/C_2<s<10\rho/C_1$ we have $f'_b(s)=C_1\eta/2$.
\item $f_b(s)\leq f_a(s)$ for all $s$, and $f_a(s)=f_b(s)$ for $s>20\rho/C_1$.
\item For $s>30\rho/C_1$, $f'_a(s)=f'_b(s)=2\pi$.\end{itemize}

\begin{figure}
\includegraphics[scale=0.6]{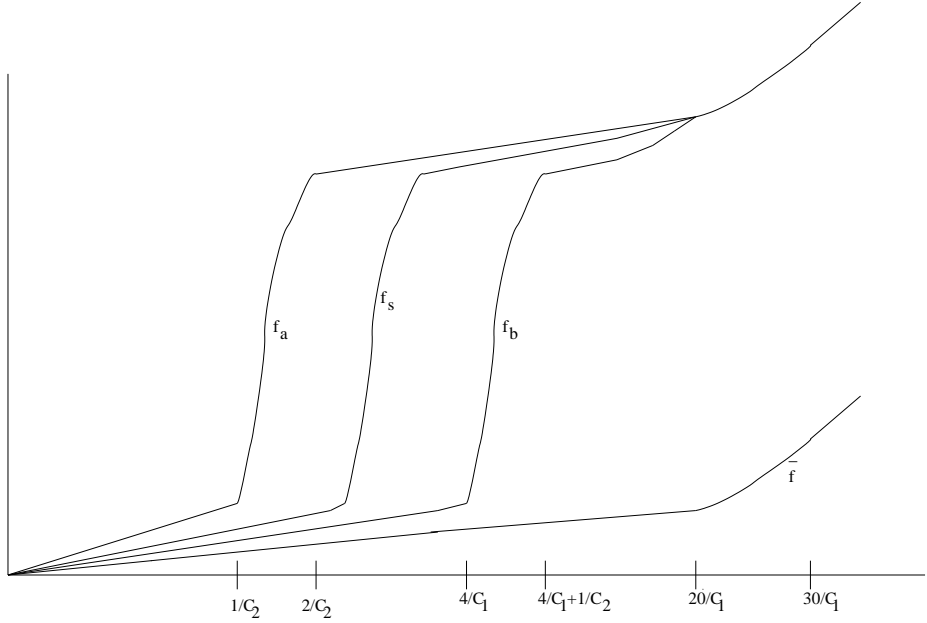}
\caption{Graphs of the functions $f_a, f_b$, $f_s$ (for typical $s\in [-1,1]$; see proof of Proposition \ref{p2}), and $\bar{f}$ (see proof of Proposition \ref{p3}).  The horizontal axis, which is not to scale, is labeled in units of $\rho$.}
\label{bundlegraphs}
\end{figure}

The defining properties of the constants $C_1$ and $C_2$, together with the above properties of $f_a,f_b$, and $g_{\rho,\ep}$, imply in particular that \[ f_b(L(x))\leq g_{\rho,\ep}(K(x))\leq f_a(L(x))\mbox{ when }L(x)\leq 10\rho/C_1.\] 
Choose a smooth, monotone increasing function $\zeta\co \mathbb{R}\to [0,1]$ such that $\zeta(s)=0$ for $s\leq 5\rho/C_1$ and $\zeta(s)=1$ for $s\geq 10\rho/C_1$ and define \[ \tilde{H}_0(x)=(1-\zeta(L(x)))H_0(x)+\zeta(L(x))\left(\frac{f_a(L(x))+f_b(L(x))}{2}\right).\]
Thus $f_b\circ L\leq \tilde{H}_0\leq f_a\circ L$, with equality in the region where $L\geq 20\rho/C_1$.

Now outside the region where $3\rho-\beta\leq K(x)\leq 3\rho+\beta$, the  Hamiltonian vector field $X_{\tilde{H}_0}$ coincides with a vector field on $E(R)$ whose $C^1$-norm is bounded by an $\rho$-dependent constant times $\eta$.  We now specify $\eta$ by requiring that this bound on the $C^1$-norm be less than one-half times the minimal $C^1$-norm of a vector field on $E(R)$ having a nonconstant periodic orbit of period at most $1$, as given by the Yorke estimate.  Thus any periodic orbit of $X_{\tilde{H}_0}$ with period at most $2$ intersects the region $\{3\rho-\beta<K<3\rho+\beta\}$; in fact, since $X_{\tilde{H}_0}$ is tangent to each of the level surfaces $K^{-1}(\{y\})$ with $y<5\rho$, any such orbit must be entirely contained in $\{3\rho-\beta<K<3\rho+\beta\}$.  We also require that $\eta$ be small enough that  $f_a$ and $f_b$ as constructed above have the property that all nonconstant periodic orbits of $f_a\circ L$ (resp. $f_b\circ L$) with period less than $2$ are contained in the region where $\rho/C_2\leq L(x)\leq 2\rho/C_2$ (resp. $4\rho/C_1\leq L(x)\leq 4\rho/C_1+\rho/C_2$). 

For sufficiently small $\delta>0$, then, the Hamiltonians \[ F_b=f_b\circ L+\delta h\circ \pi,\quad F_a=f_a\circ L+\delta h\circ \pi,\quad H=\tilde{H}_0+\delta h\circ \pi \] are $\delta h$-Hamiltonionians, nondegenerate perturbations of which have Floer homologies $HF^{[c,d]}_{*}$ for sufficiently small intervals $[c,d]$.  Consider perturbations $F_{b}^{\ep},F_{a}^{\ep}$ of $F_a,F_b$  as constructed in the previous section, for sufficiently small $\ep$.  Since $F_{b}^{\ep}$ differs from $F_b$ only where $4\rho/C_1\leq L(x)\leq 4\rho/C_1+\rho/C_2$, and likewise $F_{a}^{\ep}$ differs from  $F_a$ only where $\rho/C_2\leq L(x)\leq 2\rho/C_2$, we have $F_{b}^{\ep}\leq H\leq F_{a}^{\ep}$ everywhere for $\ep$ sufficiently small.  Also (for sufficiently small $\delta$), all $1$-periodic orbits of $H$ are nondegenerate except for possibly those contained in the region where $3\rho-\beta\leq K\leq 3\rho+\beta$ (which in turn is contained in the region where $\frac{5\rho}{2C_2}\leq L\leq \frac{7\rho}{2C_1}$), so there are arbitrarily small nondegenerate perturbations $H'$ of $H$ still having the property that \[ F_{b}^{\ep}\leq H'\leq F_{a}^{\ep}.\] (We'll always assume that $H'-H$ is supported in $\{3\rho-\beta\leq K\leq 3\rho+\beta\}$.)  So, for $d-c$ sufficiently small, we have a commutative diagram \begin{equation}\label{diag} \xymatrix{ HF^{[c,d]}_{*}(F_{b}^{\ep})\ar[rr]^{\Psi_{F_{b}^{\ep}}^{F_{a}^{\ep}}} \ar[dr]_{\Psi_{F_{b}^{\ep}}^{H'}} & & HF^{[c,d]}_{*}(F_{a}^{\ep})\\ & HF^{[c,d]}_{*}(H')\ar[ur]_{\Psi_{H'}^{F_{a}^{\ep}}} & }.\end{equation}

The construction of $f_a$ and $f_b$ shows that there are unique real numbers $\ell_a,\ell_b$ with the property that \[ f'_a(\ell_a)=f'_b(\ell_b)=4\pi, \quad f''_a(\ell_a)<0,\, f''_b(\ell_b)<0.\]  In fact, we will have \[ \frac{5\rho}{3C_2}<\ell_a<\frac{2\rho}{C_2},\quad \ell_b=\ell_a+\left(\frac{4\rho}{C_1}-\frac{\rho}{C_2}\right), \quad f_a(\ell_a)=f_b(\ell_b).\]  Also, since $f_a(\frac{2\rho}{C_2})=C(\rho)=\frac{30\pi \rho}{C_1}$,  $f'_a(\ell)<4\pi$ for $\ell_a<\ell<\frac{2\rho}{C_2}$, and $\ell_a\geq \frac{2\rho}{C_2}-\frac{\rho}{3C_2}$ it follows that \[ f_b(\ell_b)=f_a(\ell_a)\geq C(\rho)-\frac{4\pi \rho}{3C_2}=\pi \rho\left(\frac{30}{C_1}-\frac{4}{3C_2}\right).\]

Order the critical points $p_i$ of the Morse function $h$ so that $p_1$ is the unique local maximum of $h$.  Adapting the notation of the previous section, $F_{a}^{\ep}$ and $F_{b}^{\ep}$ then have periodic orbits $\gamma_{1,\ell_a}^{-}, \gamma_{1,\ell_b}^{-}$ lying in the fiber over $p_1$.  Proposition \ref{grading} then gives that, where as usual $w_0$ denotes the fiberwise capping, \[ \mu_{F_{a}^{\ep}}([\gamma_{1,\ell_a}^{-},w_0])=\mu_{F_{b}^{\ep}}([\gamma_{1,\ell_b}^{-},w_0])=2n-2m=2r.\]

We also see that \[ \mathcal{A}_{F_{a}^{\ep}}([\gamma_{1,\ell_a}^{-},w_0])=4\pi\ell_a-f(\ell_a)+O(\ep)\in \left(\frac{-30\pi \rho}{C_1},-\frac{20\pi \rho}{C_1}\right)\] (assuming $\ep$ to be sufficiently small)
while $\mathcal{A}_{F_{b}^{\ep}}([\gamma_{1,\ell_b}^{-},w_0])=\mathcal{A}_{F_{a}^{\ep}}([\gamma_{1,\ell_a}^{-},w_0])+4\pi(\ell_b-\ell_a)+O(\ep)$, so since $0<l_b-l_a<\frac{4\rho}{C_1}$ we see that \[ 
  \mathcal{A}_{F_{b}^{\ep}}([\gamma_{1,\ell_b}^{-},w_0])\in \left(-\frac{30\pi \rho}{C_1},-\frac{4\pi \rho}{C_1}\right).\]
  
  So let \[ c(\rho)=-C(\rho)=-\frac{30\pi \rho}{C_1},\quad d(\rho)=-\frac{4\pi \rho}{C_1}.\]  In particular, $d(\rho)-c(\rho)=\frac{26\pi \rho}{C_1}$ is less than the parameter $e_1$ of Corollary \ref{bigcor}; below, we always assume that the perturbing parameters $\delta$ and $\ep$ are sufficiently small.  The main result of this section is:
  
\begin{prop} \label{p3}  There is a $1$-periodic orbit $\gamma$ of $X_H$ which is contained in the region $\{3\rho-\beta\leq K\leq 3\rho+\beta\}$ and a map $w\co D^2\to E(R)$ with $w|_{\partial D^2}=\gamma$ such that the Salamon--Zehnder index $\Delta([\gamma,w],H)$ satisfies $-2r\leq \Delta([\gamma,w],H)\leq 2m+1$.  
\end{prop}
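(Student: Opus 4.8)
The plan is to argue by contradiction, exploiting the commutative triangle (\ref{diag}) together with the structural facts about the complexes $CF^{[c(\rho),d(\rho)]}_{*}(F_{b}^{\ep})$ and $CF^{[c(\rho),d(\rho)]}_{*}(F_{a}^{\ep})$ that will be recorded in Propositions \ref{p1}, \ref{p2} and Corollary \ref{bigcor}. First I would set up the notation: in grading $2r$ one has $CF^{[c(\rho),d(\rho)]}_{2r}(F_{b}^{\ep})=\mathbb{Z}_2 x_0\oplus N_0$ and $CF^{[c(\rho),d(\rho)]}_{2r}(F_{a}^{\ep})=\mathbb{Z}_2 x_1\oplus N_1$, where $x_0=[\gamma_{1,\ell_b}^{-},w_0]$ and $x_1=[\gamma_{1,\ell_a}^{-},w_0]$ are the fiberwise-capped generators computed above and $N_i$ is spanned by the remaining (non-fiberwise-capped) generators; by Proposition \ref{p1} the $x_i$ are cycles and the degree-$2r$ boundaries lie in $N_i$, by Proposition \ref{p2} $\Phi_{F_{b}^{\ep}}^{F_{a}^{\ep}}(x_0)=x_1$, and by Corollary \ref{bigcor} $\Phi_{F_{b}^{\ep}}^{F_{a}^{\ep}}$ maps $N_0$ into $N_1$. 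In particular $x_0$ and $x_1$ are nonzero in homology. I then assume, for contradiction, that no $1$-periodic orbit of $X_H$ in $\{3\rho-\beta\leq K\leq 3\rho+\beta\}$ admits a capping $w$ with $-2r\leq\Delta([\gamma,w],H)\leq 2m+1$.

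Next I would translate this into a statement about $CF^{[c(\rho),d(\rho)]}_{2r}(H')$. A generator $[\gamma,w]$ of that complex has $\mu_{H'}([\gamma,w])=2r$, so by the standard relation between the Conley--Zehnder index and the Salamon--Zehnder (mean) index recalled in Section 2 of \cite{GG2} one gets $\Delta([\gamma,w],H')\in[-2r,2m+1]$. If for every sufficiently small nondegenerate perturbation $H'$ of $H$ with $F_{b}^{\ep}\leq H'\leq F_{a}^{\ep}$ some such generator had its orbit in the region $\{3\rho-\beta\leq K\leq 3\rho+\beta\}$, then applying the Arzel\`a--Ascoli theorem to a sequence $H'_k\to H$ and using that $\Delta$ is continuous would produce a $1$-periodic orbit of $X_H$ in that region carrying a capping of Salamon--Zehnder index in $[-2r,2m+1]$, contradicting the assumption. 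So I may fix one $H'$ for which no generator of $CF^{[c(\rho),d(\rho)]}_{2r}(H')$ has its orbit in the region. By the construction of $\tilde H_0$, $f_a$, $f_b$ and the choice of $\eta$ (the critical set of $\tilde H_0$ is the zero section, and $X_{\tilde H_0}$ has no short nonconstant orbits outside the region), every $1$-periodic orbit of $X_{H'}$ outside the region is a constant orbit at a critical point of $h$. Hence, comparing actions and Maslov indices of zero-section orbits for $F_{b}^{\ep}$ and $H'$ (they agree, since both Hamiltonians restrict near the zero section as a radial function with derivative in $(0,4\pi)$ plus $\delta h\circ\pi$), one identifies $CF^{[c(\rho),d(\rho)]}_{2r}(H')$ with the span $Z$ of the generators $[\gamma_{j,0},w_0\#A]$ with $A\neq 0$ — which is precisely the zero-section part of $N_0$, regarded as a subspace of $CF^{[c(\rho),d(\rho)]}_{2r}(F_{b}^{\ep})$. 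Let $Z'\subseteq CF^{[c(\rho),d(\rho)]}_{2r-1}(F_{b}^{\ep})$ be the analogous zero-section span.

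The third step is to produce a cycle $c'\in N_0$ with $\Phi_{F_{b}^{\ep}}^{H'}(c')=\Phi_{F_{b}^{\ep}}^{H'}(x_0)$. By Proposition \ref{grading} and the action estimates of this section, $\Phi_{F_{b}^{\ep}}^{H'}(x_0)$ lies in $Z$. Using the radius--energy estimate (Theorem \ref{c0}) and the confinement behind Lemma \ref{toprest} and Corollary \ref{bigcor}, I would check that $\partial_{F_{b}^{\ep}}$ maps $Z$ into $Z'$ (a low-energy Floer trajectory leaving a zero-section orbit cannot reach the radius $\ell_b$, and it preserves the capping class), so that $\partial_{F_{b}^{\ep}}|_Z$ reduces, on each capping class, to the Morse differential of $-h$; and Proposition \ref{ipluslower}, applied to the constant orbits $\gamma_{j,0}$ (for which $F_{b}^{\ep}$ and $H'$ take equal values), shows that $\Phi_{F_{b}^{\ep}}^{H'}$ restricted to $Z$ (respectively $Z'$) is the identity plus a strictly action-decreasing operator, block-diagonal with respect to the capping class and hence with finite-dimensional blocks; so $\Phi_{F_{b}^{\ep}}^{H'}|_Z$ is bijective and $\Phi_{F_{b}^{\ep}}^{H'}|_{Z'}$ injective. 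I then set $c'=(\Phi_{F_{b}^{\ep}}^{H'}|_Z)^{-1}(\Phi_{F_{b}^{\ep}}^{H'}(x_0))\in Z\subseteq N_0$; it satisfies $\Phi_{F_{b}^{\ep}}^{H'}(c')=\Phi_{F_{b}^{\ep}}^{H'}(x_0)$ by construction, and it is a cycle because $\Phi_{F_{b}^{\ep}}^{H'}(\partial_{F_{b}^{\ep}}c')=\partial_{H'}\Phi_{F_{b}^{\ep}}^{H'}(x_0)=0$ while $\partial_{F_{b}^{\ep}}c'\in Z'$ and $\Phi_{F_{b}^{\ep}}^{H'}$ is injective on $Z'$. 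Finally, $x_0-c'$ is a cycle in $CF^{[c(\rho),d(\rho)]}_{2r}(F_{b}^{\ep})$ with $\Phi_{F_{b}^{\ep}}^{H'}(x_0-c')=0$, so commutativity of (\ref{diag}) forces $\Phi_{F_{b}^{\ep}}^{F_{a}^{\ep}}(x_0-c')$ to be a boundary, hence to lie in $N_1$ by Proposition \ref{p1}; but $\Phi_{F_{b}^{\ep}}^{F_{a}^{\ep}}(x_0-c')=x_1-\Phi_{F_{b}^{\ep}}^{F_{a}^{\ep}}(c')$ by Proposition \ref{p2}, with $\Phi_{F_{b}^{\ep}}^{F_{a}^{\ep}}(c')\in N_1$ by Corollary \ref{bigcor} and $x_1\notin N_1$, a contradiction. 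This will establish the proposition, and Theorem \ref{main2} then follows by the discussion at the end of the section.

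The step I expect to be the main obstacle is the combination of the second and third paragraphs: extracting the sharp Salamon--Zehnder window $[-2r,2m+1]$ from the grading via the index relation and carrying it through the Arzel\`a--Ascoli limit, honestly identifying $CF^{[c(\rho),d(\rho)]}_{2r}(H')$ with the zero-section block $Z\subseteq N_0$ (which rests on the orbit analysis of Section 4 together with the $C^0$-confinement of Theorem \ref{c0}), and controlling the boundary operators and the continuation map on $Z$ and $Z'$ well enough that the inverse $(\Phi_{F_{b}^{\ep}}^{H'}|_Z)^{-1}$ is legitimate in spite of the generically dense spectrum of actions — it is the capping-class block decomposition coming from Corollary \ref{bigcor} that makes this last point work.
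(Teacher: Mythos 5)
Your overall architecture is the same as the paper's: assume no orbit with the required Salamon--Zehnder window exists, use that assumption plus the relation between $\Delta$ and $\mu_{CZ}$ to force $CF_{*}^{[c(\rho),d(\rho)]}(H')$ in degrees $2r-1,2r$ to consist entirely of zero-section generators, produce a cycle $c'\in N_0$ with $\Phi_{F_b^\ep}^{H'}(c')=\Phi_{F_b^\ep}^{H'}(x_0)$, and then feed $x_0-c'$ through the commutative triangle to get a contradiction with Propositions \ref{p1}, \ref{p2} and Corollary \ref{bigcor}.  The key technical step, which you flag yourself as the delicate one, is where your argument and the paper's diverge, and this is where there is a genuine gap.

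You try to get the cycle $c'$ by inverting $\Phi_{F_b^\ep}^{H'}$ \emph{restricted to the zero-section block $Z\subset CF_{2r}^{[c(\rho),d(\rho)]}(F_b^\ep)$}, and then verify $\partial_{F_b^\ep}c'=0$ by asserting that $\partial_{F_b^\ep}$ maps $Z$ into $Z'$.  Your parenthetical justification --- ``a low-energy Floer trajectory leaving a zero-section orbit cannot reach the radius $\ell_b$'' --- is not supplied by Theorem \ref{c0}: that theorem only confines trajectories of energy $<D\alpha^4$ to $E(3\alpha)$, where $\alpha$ is a fixed constant independent of $\rho$, whereas the sphere $L=\ell_b$ has radius $2\sqrt{\ell_b}\approx 4\sqrt{\rho/C_1}$, which for small $\rho$ lies deep inside $E(3\alpha)$.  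In fact, a boundary trajectory from a generator $[\gamma_{j,0},w_0\#A]$ to a generator $[\gamma_{k,\ell_b}^-,w_0\#A]$ is \emph{a priori} compatible with everything you have available: by Corollary \ref{bigcor} the capping class $A$ is preserved, the index drop $ind_h p_k=ind_h p_j+1$ can occur, the energy is approximately $|4\pi\ell_b-f_b(\ell_b)|$ which is comfortably below $d(\rho)-c(\rho)$, and both endpoints can lie in the action window.  So $\partial_{F_b^\ep}Z\subset Z'$ is not established, and with it the claim that your $c'$ is a $\partial_{F_b^\ep}$-cycle collapses --- which is exactly what feeds into the commutative triangle.  (A secondary point: the capping-class block-diagonality of $\Phi_{F_b^\ep}^{H'}$ that you invoke for invertibility does not follow from Corollary \ref{bigcor}, since $H'$ is not a perturbed $\delta h$-Hamiltonian of the form required by Lemma \ref{toprest}; but this is repairable by the action-drop argument, unlike the gap above.)

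The paper sidesteps this entirely by introducing the auxiliary Hamiltonian $\bar{F}=\bar{f}\circ L+\delta h\circ\pi$ with $\bar f'<4\pi$ everywhere, so that $CF_*^{[c(\rho),d(\rho)]}(\bar F)$ consists \emph{only} of zero-section generators in all degrees, and then inverts the composite continuation map $\Phi_{H^s_2}\circ\Phi_{H^s_1}\colon CF_*(\bar F)\to CF_*(H')$ on degrees $2r-1,2r$ to get a ``gauge transformation'' $A$ satisfying $A\,\partial_{H'}=\partial_{\bar F}A$.  The cycle $c'$ is then $\Phi_{H^s_1}(A(\Phi_{H^s_2}(x_0)))$, and the fact that $\partial_{F_b^\ep}c'=0$ follows purely from the chain-map property of $\Phi_{H^s_1}$ together with $\partial_{\bar F}A\Phi_{H^s_2}(x_0)=A\partial_{H'}\Phi_{H^s_2}(x_0)=0$ --- no claim about $\partial_{F_b^\ep}$ preserving a subspace is needed.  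To repair your argument you would have to either prove the confinement statement $\partial_{F_b^\ep}Z\subset Z'$ by a new estimate (which the monotonicity bounds in the paper do not give at the scale of $\ell_b$), or adopt the paper's device of the auxiliary Hamiltonian $\bar{F}$.
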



Theorem \ref{main2} follows quickly from Proposition \ref{p3}: recall that $H$ was taken to have the form $H=\tilde{H}_0+\delta h\circ \pi$ where $\delta>0$ was arbitrarily small, so applying the Arzel\`a-Ascoli theorem to a sequence of orbits $\gamma_{\delta_k}$ (with capping discs $w_{\delta_k}$) given by Proposition \ref{p3} where $\delta_k\searrow 0$ gives a $1$-periodic orbit $\gamma$ for $X_{\tilde{H_0}}$ in the region $\{3\rho-\beta\leq K\leq 3\rho+\beta\}$.  But in this region we have $\tilde{H}_0=g_{\rho,\beta}\circ K$, so evidently $\gamma$ is a $1$-periodic orbit for $X_{g_{\rho,\beta}\circ K}$, which in turn implies that $X_K$ has a periodic orbit (of some, possibly large, period, but with the same Salamon--Zehnder index by Lemma 2.6 of \cite{GG2}) contained in the region $\{3\rho-\beta\leq K\leq 3\rho+\beta\}$  (for every sufficiently small parameter $\beta$). By the continuity of the Salamon--Zehnder invariant the periodic orbit $\gamma$ of $X_K$ that we have obtained has a capping disc $w$ (obtained by gluing a thin cylinder from $\gamma_{\delta_k}$ to $\gamma$ to the end of $w_{\delta_k}$ where $\gamma_{\delta_k}$ is close to $\gamma$)  with $\Delta([\gamma,w],K)\in [-2r,2m+1]$.  This proves Theorem \ref{main2} (interchanging our $3\rho$ and $\beta$ with $\rho$ and $\ep/2$ in the notation of Theorem \ref{main}) for the case that $(P,\Omega)=(E(R),\omega)$ and hence, using the Weinstein neighborhood theorem as in the introduction, for arbitrary $(P,\Omega)$.

It remains to prove Proposition \ref{p3}.  The proof depends on the following propositions (recall that $p_1$ is the unique local maximum of $h$):

\begin{prop}\label{p1} The Floer chains $[\gamma_{1,\ell_b}^{-},w_0],[\gamma_{1,\ell_a}^{-},w_0]$ are  cycles representing nontrivial homology classes in their respective Floer complexes $CF^{[c(\rho),d(\rho)]}_{*}(F_{b}^{\ep})$, $CF^{[c(\rho),d(\rho)]}_{*}(F_{a}^{\ep})$.  In fact, if $J_t$ is a generic path of almost complex structures sufficiently close to $\bar{J}$, then we have \[ \partial_{F_{a}^{\ep},J_t} [\gamma_{1,\ell_a}^{-},w_0]=\partial_{F_{b}^{\ep},J_t} [\gamma_{1,\ell_b}^{-},w_0]=0\] and, for every generator $[\gamma,w]$ of $CF^{[c(\rho),d(\rho)]}_{2r+1}(F_{a}^{\ep})$ or $CF^{[c(\rho),d(\rho)]}_{2r+1}(F_{b}^{\ep})$, \[ \langle \partial_{F_{a}^{\ep},J_t}[\gamma,w],[\gamma_{1,\ell_a}^{-},w_0]\rangle=0,\quad \langle \partial_{F_{b}^{\ep},J_t}[\gamma,w],[\gamma_{1,\ell_b}^{-},w_0]\rangle=0.\]
\end{prop}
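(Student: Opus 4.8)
\emph{Plan.} The idea is to localize the whole statement to the ``fiberwise‑capped'' part of the complex near the bottom grading and then feed in the appendix's local Morse--Bott Floer theory. By Corollary \ref{bigcor}, for generic $J_t$ near $\bar{J}$ the boundary operators $\partial_{F_a^{\ep},J_t}$ and $\partial_{F_b^{\ep},J_t}$ on $CF^{[c(\rho),d(\rho)]}_*$ preserve the capping class, so the span of the fiberwise‑capped (i.e.\ $A=0$) generators is a direct summand, and all the matrix elements in the statement concern only this summand. First I would catalogue, using Propositions \ref{orbits} and \ref{grading} together with the explicit properties of $f_a$ (resp.\ $f_b$), which fiberwise‑capped generators lie in the window $[c(\rho),d(\rho)]=(-30\pi\rho/C_1,-4\pi\rho/C_1)$: the zero‑section orbits have action $\approx 0$ and the orbits at the other $k=1$ resonant level $\ell'_a$ (where $f''_a>0$) have action $\ge 4\pi\rho/C_2-O(\rho\eta)>0$, so neither lies in the window; the orbits $\gamma^{\pm}_{j,\ell_a}$ at the resonant level $\ell_a$ (where $f''_a<0$) do; and any higher‑resonance ($k\ge 2$) orbits, whatever their actions, have Maslov index $\ge 4r$ by Proposition \ref{grading}. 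Reading off that index formula then gives: no fiberwise‑capped generator in the window has degree $<2r$; the unique one of degree $2r$ is $x_0:=[\gamma^-_{1,\ell_a},w_0]$ (forced because $p_1$ is the unique index‑$2m$ critical point of $h$); and the ones of degree $2r+1$ are $[\gamma^-_{j,\ell_a},w_0]$ with $ind_h p_j=2m-1$, together with $[\gamma^+_{1,\ell_a},w_0]$ when $r=1$. The same picture holds for $F_b^{\ep}$ with $\ell_b$ in place of $\ell_a$.

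Granting this catalogue, $\partial_{F_a^{\ep},J_t}x_0=0$ is immediate, since its boundary would be a combination of fiberwise‑capped generators of degree $2r-1$, of which there are none; likewise $\partial_{F_b^{\ep},J_t}[\gamma^-_{1,\ell_b},w_0]=0$. For the second assertion, Corollary \ref{bigcor} again shows that only fiberwise‑capped degree‑$(2r+1)$ generators can have a nonzero matrix element onto $x_0$, and those are exactly the level‑$\ell_a$ generators just listed. Because $x_0$ is the unique fiberwise‑capped generator of degree $2r$ and there are none of degree $2r-1$, the span of the fiberwise‑capped level‑$\ell_a$ generators is a genuine subcomplex near the bottom, and it suffices to prove that $x_0$ is not hit by the differential of this subcomplex.

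This is where the appendix enters. Since $f''_a(\ell_a)\ne 0$ and the time‑one flow of $X_{f_a\circ L}$ is the identity on the level set $S_{\ell_a}:=\{x\in E(R)\mid L(x)=\ell_a\}$, one checks that $S_{\ell_a}$ — the radius‑$\ell_a$ sphere bundle over $M$, with fiber $S^{2r-1}$ — is a Morse--Bott nondegenerate family of $1$‑periodic orbits of $X_{f_a\circ L}$: the only $d\phi_1$‑fixed directions are those tangent to $S_{\ell_a}$, the radial direction being moved because $f''_a(\ell_a)\ne 0$. The Hamiltonian $F_a^{\ep}$ is obtained from $f_a\circ L$ by adding the small perturbation $\delta h\circ\pi+\ep g_{f_a}$, whose restriction to $S_{\ell_a}$ is a Morse function with exactly the critical points $\gamma^{\pm}_{j,\ell_a}$. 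After confining the low‑energy Floer cylinders relevant to the subcomplex to an isolating neighborhood of $S_{\ell_a}$ (using $d(\rho)-c(\rho)<e_1$ and radius estimates in the spirit of Section 2), the appendix identifies the homology of this subcomplex with $H_*(S_{\ell_a};\mathcal{O})$ for a possibly nontrivial $\mathbb{Z}_2$‑local system $\mathcal{O}$, shifted so that $x_0$ sits in degree $2r$. Since $S^{2r-1}$ is connected ($r\ge 1$) and $M$ is connected, $S_{\ell_a}$ is connected and $H_0(S_{\ell_a};\mathcal{O})=\mathbb{Z}_2$ for any $\mathcal{O}$; thus the degree‑$2r$ homology of the subcomplex is $\mathbb{Z}_2$. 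As $x_0$ spans degree $2r$ and is a cycle, this forces the image of the degree‑$(2r+1)$ part of the differential to miss $x_0$, i.e.\ $\langle\partial_{F_a^{\ep},J_t}[\gamma,w],x_0\rangle=0$ for every degree‑$(2r+1)$ generator $[\gamma,w]$; the same computation shows $x_0$ is not a boundary in the full complex (nothing outside the subcomplex can hit it either, those generators having degree $\ge 4r$ or lying outside the window), so $x_0$ represents a nontrivial class. The argument for $F_b^{\ep}$ is identical with $S_{\ell_b}$.

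The main obstacle is the local Morse--Bott computation of the third paragraph: making the appendix's machinery apply cleanly here — confining the Floer cylinders in the action window to a neighborhood of $S_{\ell_a}$ so that the local complex really is a subquotient of the filtered complex, verifying the Morse--Bott nondegeneracy with the small‑curvature disc‑bundle geometry, and bookkeeping the grading shift. (The twist $\mathcal{O}$ is ultimately harmless because it cannot change $H_0$ over $\mathbb{Z}_2$.) In Case (ii) of Theorem \ref{main2} the function $h$ is chosen without critical points of index $2m-1$, so the generators $[\gamma^-_{j,\ell_a},w_0]$ with $ind_h p_j=2m-1$ do not occur and one is left only with the $r=1$ situation, which is why Case (ii) costs only a brief digression here.
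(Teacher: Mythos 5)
Your overall plan coincides with the paper's for Case~(i): use Corollary~\ref{bigcor} to restrict to fiberwise-capped generators, catalogue the degree-$(2r-1)$, $2r$, and $(2r+1)$ fiberwise-capped generators in the window via Propositions~\ref{orbits} and~\ref{grading}, exclude types (the zero-section orbits and the $\ell'_a$ orbit) by action estimates, and then feed the degree-$(2r+1)\to 2r$ matrix elements into the appendix's local Morse--Bott Floer homology of the sphere bundle.  That said, two things deserve attention.

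First, you phrase the localization as ``the span of the fiberwise-capped level-$\ell_a$ generators is a genuine subcomplex,'' but that isn't quite what's needed or what the paper establishes.  What actually has to be shown is that the \emph{matrix elements} $\langle\partial_{F_a^{\ep},J_t}[\gamma,w_0],[\gamma_{1,\ell_a}^-,w_0]\rangle$ for $\gamma$ of types (ii) or (iv) coincide with the local Floer matrix elements $\langle\partial^{loc}\gamma,\gamma_{1,\ell_a}^-\rangle$.  The paper proves this by a two-sided argument: Gromov compactness shows that any filtered-complex cylinder contributing to the matrix element stays in $\mathcal{U}$ once $\delta,\ep$ are small enough (because the energy tends to $0$), and Lemma~\ref{toprest} is used in the other direction to show that any local cylinder has the correct topology ($A=0$) and hence contributes to the filtered matrix element.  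Your ``confining the low-energy Floer cylinders to an isolating neighborhood'' gestures at the first half but leaves the other direction, and the resulting \emph{equality} of counts, implicit.

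Second, and more seriously, your treatment of Case~(ii) is incorrect as written.  Your nontriviality argument leans on the $\mathbb{Z}$-grading: you identify the degree-$2r$ piece of the local homology with $H_0(S_{\ell_a};\mathbb{Z}_2)$.  That identification uses the fact that the relative grading gap between $\gamma_{1,\ell_a}^-$ and the top generator equals $\dim S_{\ell_a}$, which only makes sense when the relative grading on $HF^{loc}$ is by $\mathbb{Z}$ — i.e.\ precisely when $c_1(TE(R))$ is torsion.  In Case~(ii) the relative grading is only by $\mathbb{Z}/\Gamma$, so the sphere-bundle local Floer homology does not directly tell you which element survives.  The paper avoids this by using a \emph{different} Morse--Bott family and reference Hamiltonian: it takes $H_0 = f_a\circ L + \delta h\circ\pi$ (not $f_a\circ L$) and $N$ the circle of radius $\ell_a$ in the single fiber over the maximum $p_1$.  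The local complex then has exactly two generators $\gamma_{1,\ell_a}^\pm$, its homology is $H_*(S^1;\mathbb{Z}_2)\cong\mathbb{Z}_2^2$, and so the local differential must vanish outright — an argument that survives with only a relative $\mathbb{Z}/2$-grading.  Your proposal reduces Case~(ii) to the $r=1$ situation (correctly) but then implicitly falls back on the sphere-bundle argument, which isn't available there.  You would need to add the paper's distinct local-Floer computation for the circle to make Case~(ii) work.

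(A minor point: you allow for a twisted local system $\mathcal{O}$ on $S_{\ell_a}$, which is harmlessly cautious; the appendix's Theorem~\ref{morsebott}, via Po\'zniak, gives the untwisted $H_*(N;\mathbb{Z}_2)$ directly.)
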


\begin{prop}\label{p2}  For a generic family of almost complex structures $J_t$ near $\bar{J}$, there is a regular monotone homotopy $(F^{\ep}_{s},\bar{J}_{s,t})$ from $(F^{\ep}_{b},J_t)$ to $(F^{\ep}_{a},J_t)$ such that the chain map \[ \Phi_{F_{s}^{\ep},\bar{J}_{s,t}}\co CF^{[c(\rho),d(\rho)]}_{*}(F_{b}^{\ep})\to CF^{[c(\rho),d(\rho)]}_{*}(F_{a}^{\ep})\] has \[ 
\Phi_{F_{s}^{\ep},\bar{J}_{s,t}}([\gamma_{1,\ell_b}^{-},w_0])=[\gamma_{1,\ell_a}^{-},w_0].\]
\end{prop}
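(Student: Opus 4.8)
The plan is to realize the chain map by an explicit monotone homotopy that ``slides the bump'' of $f_b$ inward onto that of $f_a$, keeping $\gamma_{1,\ell_b}^-$ alive throughout, and then to evaluate it on $[\gamma_{1,\ell_b}^-,w_0]$ by combining index and action bookkeeping (which pins the answer down to a coefficient in $\mathbb{Z}_2$) with a count of Floer trajectories confined to the fiber $E_{p_1}:=\pi^{-1}(p_1)$ (which forces that coefficient to be $1$). Concretely, I would build for $s\in[-1,1]$ (constant for $|s|>1$) a smooth family $f_s\co[0,\infty)\to[0,\infty)$ with $f_{-1}=f_b$, $f_1=f_a$, each $f_s$ of the standard form of Section 3 (strictly increasing, $f_s'\equiv 2\pi$ for argument $\geq\alpha_0$, $0<f_s'(0)<4\pi$, $f_s''\ne 0$ on $(f_s')^{-1}(4\pi\mathbb{Z})$), and $\partial_s(f_s\circ L)\geq 0$ everywhere; together with the unchanged term $\delta h\circ\pi$ and a smoothly varying family of perturbations $\ep g_{f_s}$ this yields a monotone homotopy $(F_s^\ep,\bar{J}_{s,t})$ of exactly the type governed by Corollary~\ref{bigcor}. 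The essential design requirement is a smooth path $\ell_s$ from $\ell_b$ to $\ell_a$ with $f_s'(\ell_s)=4\pi$, $f_s''(\ell_s)<0$, $\ell_s$ the only nearby such solution, and $f_s(\ell_s)$ independent of $s$ (hence equal to $f_a(\ell_a)=f_b(\ell_b)$); by the computations preceding Proposition~\ref{orbits}, carried out fiberwise over $p_1$, this makes $\gamma_{1,\ell_s}^-$ a nondegenerate $1$-periodic orbit of $X_{F_s^\ep}$ for every $s$ with action $\mathcal{A}_{F_s^\ep}([\gamma_{1,\ell_s}^-,w_0])=4\pi\ell_s-f_s(\ell_s)+O(\ep)$ moving monotonically and remaining in $(c(\rho),d(\rho))$. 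I would also choose $\bar{J}_{s,t}$ generic among families near $\bar{J}$ preserving $TE_{p_1}$ and agreeing at $s=\pm\infty$ with the $J_t$ of Proposition~\ref{p1} (likewise taken to preserve $TE_{p_1}$); this is legitimate because, the connection being flat near $p_1$, the vector fields $X_{f_s\circ L}$ and $X_{\ep g_{f_s}}$ are vertical near $E_{p_1}$ while $X_{\delta h\circ\pi}$ vanishes along $E_{p_1}$, so $X_{F_s^\ep}$ is tangent to $E_{p_1}$ and $E_{p_1}$ is invariant under the time-dependent flow.

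With such a homotopy fixed, Corollary~\ref{bigcor} shows $\Phi_{F_s^\ep,\bar{J}_{s,t}}$ preserves the splitting of the Floer complexes by the class $A$, so $\Phi_{F_s^\ep,\bar{J}_{s,t}}([\gamma_{1,\ell_b}^-,w_0])$ --- a cycle of Maslov index $2r$ with $A=0$ --- lies in the $A=0$ part of $CF^{[c(\rho),d(\rho)]}_{2r}(F_a^\ep)$. A direct count with Proposition~\ref{grading} shows that this subspace is one-dimensional over $\mathbb{Z}_2$, generated by $[\gamma_{1,\ell_a}^-,w_0]$: a fiberwise-capped orbit $[\gamma_{j,\ell}^{\pm},w_0]$ has Maslov index $\ne 2r$ unless $\ell=\ell_a$, $f_a''(\ell)<0$, the sign is ``$-$'', and $ind_{p_j}h=2m$, which forces $p_j=p_1$ since $p_1$ is the only local maximum of $h$; and $[\gamma_{1,0},w_0]$ has action $-\delta h(p_1)$, which for small $\delta$ is not in $(c(\rho),d(\rho))$. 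Hence $\Phi_{F_s^\ep,\bar{J}_{s,t}}([\gamma_{1,\ell_b}^-,w_0])=a\,[\gamma_{1,\ell_a}^-,w_0]$ for some $a\in\mathbb{Z}_2$, and it remains to show that the mod-$2$ number of index-zero continuation solutions from $[\gamma_{1,\ell_b}^-,w_0]$ to $[\gamma_{1,\ell_a}^-,w_0]$ equals $1$.

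To see this I would confine all such solutions $u$ to $E_{p_1}$: forming the graph $\tilde{u}(z)=(z,u(z))$, which is holomorphic for the almost complex structure built from $\bar{J}_{s,t}$ and $X_{F_s^\ep}$ as in the proof of Lemma~\ref{sik}, the tangency of $X_{F_s^\ep}$ to $E_{p_1}$ and the $\bar{J}_{s,t}$-invariance of $TE_{p_1}$ make $\mathbb{R}\times(\mathbb{R}/\mathbb{Z})\times E_{p_1}$ a complex submanifold of the ambient, whose preimage under $\tilde{u}$ contains the two asymptotic ends; positivity of intersections, together with the exponential asymptotic convergence of $u$ (which renders this preimage a finite set), then forces $u(\mathbb{R}\times(\mathbb{R}/\mathbb{Z}))\subset E_{p_1}$. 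Restricted to $E_{p_1}$, a disc in $\mathbb{C}^r$ carrying the standard data, $u$ is a continuation solution for the model problem with Hamiltonians $f_s(|z|^2/4)$ plus the small Morse perturbation; passing to the limit $\ep\to 0$ and invoking the local Hamiltonian Morse--Bott Floer formalism of the Appendix, the count reduces to a continuation count between the critical spheres $\{\,|z|^2/4=\ell_s\,\}$, which persist throughout the homotopy, and since $\gamma_{1,\ell_s}^-$ is for each $s$ the minimum of the perturbing Morse function on the relevant $S^{2r-1}$, this count is $1\bmod 2$. Therefore $a=1$, which is the assertion of the proposition.

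The confinement of continuation trajectories to $E_{p_1}$ is the step I expect to be the main obstacle: the positivity-of-intersections argument must be made rigorous in the non-compact cylindrical setting (using the asymptotics to guarantee that $\tilde{u}$ meets the complex submanifold in only finitely many, necessarily positive, points), and one must verify that a generic family $\bar{J}_{s,t}$ achieving transversality --- both for the solutions lying inside $E_{p_1}$ and, vacuously, for those lying outside --- can be found within the class of families preserving $TE_{p_1}$. The ensuing model computation, though it leans on the Morse--Bott continuation formalism, is then essentially routine bookkeeping.
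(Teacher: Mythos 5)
Your construction of the family $f_s$ and the bookkeeping that reduces the problem to a single $\mathbb{Z}_2$-coefficient (via Corollary~\ref{bigcor}, Proposition~\ref{grading}, and the action filter) are both correct and match the paper's set-up. But the second half of your argument, where you try to show that the coefficient is $1$ by a direct geometric count of Floer continuation cylinders confined to the fiber $E_{p_1}$, has a genuine gap, and it is exactly at the step you flagged as the ``main obstacle.''

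The confinement step via positivity of intersections does not work: $E_{p_1}$ is a single fiber, so $\mathbb{R}\times(\mathbb{R}/\mathbb{Z})\times E_{p_1}$ has real codimension $2m$ in the graph space, and classical positivity of intersections (and its asymptotic refinements) only provides a topological obstruction when the complex submanifold has complex codimension one. For $m\geq 2$ nothing prevents a continuation cylinder from being asymptotic at both ends to orbits in $E_{p_1}$ while missing $E_{p_1}$ entirely at finite $(s,t)$, so there is no contradiction to exploit. The energy is small (of order $\rho$), and Lemma~\ref{iscpct} therefore keeps $\pi\circ u$ in a small ball around $p_1$, but it does not force $\pi\circ u$ to be exactly constant once the term $\delta h\circ\pi$ (and the perturbation $\ep g_{f_s}$) is present. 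Your declaration that transversality for cylinders \emph{outside} $E_{p_1}$ holds ``vacuously'' is therefore circular: it presupposes the very confinement you are trying to prove, and without confinement one must also deal with the transversality constraint of keeping $\bar{J}_{s,t}$ in the restricted class that preserves $TE_{p_1}$.

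By contrast, the paper's proof never touches the geometry of individual continuation cylinders. It subdivides the homotopy into steps of size $<\mu$, uses the shift/projection chain-homotopy Lemma~\ref{p2lemma} together with the boundary-vanishing facts of Proposition~\ref{p1} (namely that $[\gamma_{1,\ell(s)}^{-},w_0]$ is a cycle and never appears in $\partial[\gamma',w']$) to force the ``diagonal'' matrix element at each step to equal $1$, and then composes. The mechanism is entirely algebraic once Corollary~\ref{bigcor} and Proposition~\ref{p1} are in hand, and it sidesteps the transversality and confinement questions completely. If you want a geometric approach you would most plausibly need an adiabatic/cascade analysis as $\delta,\ep\to 0$, not intersection positivity, and that is a substantially more involved project than what you have written.
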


\subsection{Proof of Proposition \ref{p1}}  We prove the result for the generator  $[\gamma_{1,\ell_a},w_0]$ of  $CF^{[c(\rho),d(\rho)]}(F_{a}^{\ep})$; the proof when `$a$' is replaced by `$b$' is identical.  (The proof will also apply equally well to the Hamiltonians $F_{s}^{\ep}$ introduced in the proof of Proposition \ref{p2}.)

Where $\partial=\partial_{F_{a}^{\ep},J_t}$ for a generic family of almost complex structures $J_t$ near $\bar{J}$, Corollary \ref{bigcor} shows that $\partial [\gamma_{1,\ell_a}^{-},w_0]$ has the form $\sum_{\gamma}a_{\gamma}[\gamma,w_0]$ where $\mu_{F_{a}^{\ep}}([\gamma,w_0])=2r-1=2n-2m-1$.  But Proposition \ref{grading} shows that there are no periodic orbits $\gamma$ with $\mu_{F_{a}^{\ep}}([\gamma,w_0])=2n-2m-1$, so evidently \[ \partial [\gamma_{1,\ell_a}^{-},w_0]=0.\]

We now need to show that $ [\gamma_{1,\ell_a}^{-},w_0]$ never appears with nonzero coefficient in any boundary. By Corollary \ref{bigcor}, the only generators $[\gamma,w]$ for which $\partial [\gamma,w]$ can contain $ [\gamma_{1,\ell_a}^{-},w_0]$ with nonzero coefficient are those with $w=w_0$ and $\mu_{F_{a}^{\ep}}([\gamma,w_0])=2r+1$.  The only possibilities for such a $\gamma$ are, by Proposition \ref{grading} and the construction of $f_a$: \begin{itemize} \item[(i)] $\gamma=\gamma_{j,0}$, the constant orbit at a critical point $p_j$ of $h$ on the zero section, where $ind_{p_j}h=2m-1$; 
\item[(ii)] $\gamma=\gamma_{j,\ell_a}^{-}$, where $ind_{p_j}h=2m-1$;
\item[(iii)] $\gamma=\gamma_{1,\ell'_a}^{-}$, where $\ell'_a$ is the unique real number with $f'_a(\ell'_a)=4\pi$ and $f''_a(\ell'_a)>0$;
\item[(iv)] (only in the case $r=1$) $\gamma=\gamma_{1,\ell_a}^{+}$.\end{itemize}

Now we have $\mathcal{A}_{F_{a}^{\ep}}([\gamma_{j,0},w_0])=O(\delta)\notin [c(\rho),d(\rho)]$ since $d(\rho)=-\frac{4\pi \rho}{C_1}$.  Also \[ \mathcal{A}_{F_{a}^{\ep}}([\gamma_{1,\ell'_a}^{-},w_0])=4\pi \ell'_a-f_a(\ell'_a)+O(\delta,\ep)>0 \] since the construction of $f_a$ ensures that $f'_a(\ell)<4\pi$ for $\ell<\ell'_a$, so that \linebreak[6]
$\mathcal{A}_{F_{a}^{\ep}}([\gamma_{1,\ell'_a}^{-},w_0])\notin [c(\rho),d(\rho)]$.  So in fact generators of types (ii) and (iv) above are the only ones that actually belong to $CF^{[c(\rho),d(\rho)]}_{2r+1}(F_{a}^{\ep})$.

Assume first that we are in the case where $c_1(TE(R))$ is a torsion class.  In this case, we can use the results discussed in the appendix at the end of this paper to deduce that no  generators of type (ii) or (iv) can have boundary containing $[\gamma_{1,\ell_a}^{-},w_0]$ with nonzero coefficient.  Indeed, in the notation of the appendix, let $N$ denote the sphere bundle in $E(R)$ over $M$ of radius $\ell_a$, and let the $(\mathbb{R}/\mathbb{Z})$-action be given by $t\cdot x=e^{-2\pi it}x$.  $F_{a}^{\ep}$ is then a $C^2$-small perturbation of $f_a\circ L$ (which plays the role of the Hamiltonian $H_0$ in the appendix), so for sufficiently small $\delta$ and $\ep$ the Hamiltonian $F_{a}^{\ep}$ can be used to compute the local Floer homology $HF^{loc}_{*}(f_a\circ L,\mathcal{U})$ ($\mathcal{U}$ is, as in the appendix, a suitable small neighborhood in the contractible loopspace of the set of $1$-periodic orbits which foliate $N$).  Write $\partial^{loc}$ for the differential on the local Floer complex $CF^{loc}_{*}(F_{a}^{\ep},f_a\circ L,\mathcal{U})$ (as in the appendix, this complex is generated by periodic orbits of $X_{F_{a}^{\ep}}$ which are contained in $\mathcal{U}$, with no ``capping data'').   Suppose that $\gamma$ is an orbit of type (ii) or (iv) above.  We then see that any of the $[\gamma,w_0]$ has action differing from that of  $[\gamma_{1,\ell_a}^{-},w_0]$ by an amount tending to zero with $\delta$ and $\ep$, so  that any cylinder $u\co \mathbb{R}\times(\mathbb{R}/\mathbb{Z})\to E(R)$  contributing to the matrix element  \[\langle \partial[\gamma,w_0],[\gamma_{1,\ell_a}^{-},w_0]\rangle \]has its energy tending to $0$ as $\ep,\delta\to 0$.  So a Gromov compactness argument shows that, for $\ep,\delta$ sufficiently small, each $u(s,\cdot)$ belongs to $\mathcal{U}$, and so $u$ contributes to $\langle \partial^{loc}\gamma,\gamma_{1,\ell_a}^{-}\rangle.$  Conversely, if $u$ contributes to $\langle \partial^{loc}\gamma,\gamma_{1,\ell_a}^{-}\rangle$, then 
Lemma \ref{toprest} implies that $[\gamma_{1,\ell_a}^{-},w_0\#u]=[\gamma_{1,\ell_a}^{-},w_0]$, so $u$ contributes to $\langle \partial[\gamma,w_0],[\gamma_{1,\ell_a}^{-},w_0]\rangle$.  Since the counting prescriptions for these $u$ are identical whether they are considered as contributing to $\partial$ or to $\partial^{loc}$, this shows that \[ 
\langle \partial[\gamma,w_0],[\gamma_{1,\ell_a}^{-},w_0]\rangle=\langle \partial^{loc}\gamma,\gamma_{1,\ell_a}^{-}\rangle \]
for every orbit $\gamma$ of type (ii) or (iv).  

But by Theorem \ref{morsebott}, $HF^{loc}_{*}(f_a\circ L,\mathcal{U})$ is equal as a relatively $\mathbb{Z}$-graded group to the singular homology of the sphere bundle $N$.  (The relative grading is by $\mathbb{Z}$ because of our assumption on $c_1$.)  Now the (strictly) largest relative grading difference $gr(\gamma,\gamma')$ between any two generators of $CF^{loc}_{*}(F_{a}^{\ep},f_a\circ L,\mathcal{U})$ is attained where $\gamma=\gamma_{j,\ell_a}^{+}$ and $\gamma'=\gamma_{1,\ell_a}^{-}$ where $j$ is chosen so that $p_j$ is a local minimum of $h$, and this relative grading difference is $\dim N=2m+2r-1$.  So in order for $HF^{loc}_{*}(f_a\circ L,\mathcal{U})$ to have $\mathbb{Z}_2$-summands in gradings separated by $\dim N$, it must be that some element of $CF^{loc}_{*}(F_{a}^{\ep},f_a\circ L,\mathcal{U})$ having the same relative grading as  $\gamma_{1,\ell_a}^{-}$ represents a nontrivial class in  $HF^{loc}_{*}(f_a\circ L,\mathcal{U})$.  But since $\gamma_{1,\ell_a}^{-}$ is the 
 only generator for $CF^{loc}_{*}(F_{a}^{\ep},f_a\circ L,\mathcal{U})$ in its relative grading, this is possible only if, for all $\gamma$, $\langle\partial^{loc}\gamma,\gamma_{1,\ell_a}^{-}\rangle=0$.  So if $\gamma$ is an orbit of type (ii) or (iv) we have 
\[ \langle \partial[\gamma,w_0],[\gamma_{1,\ell_a}^{-},w_0]\rangle=0,\] which completes the proof that 
$[\gamma_{1,\ell_a}^{-},w_0]$ has the stated properties.

We now turn to the alternate case, where we make no assumption on $c_1$ but we do assume that $h$ has no critical points of index $2m-1$.  This assumption obviously eliminates orbits of type (ii), so the only orbit to worry about is $\gamma=\gamma_{1,\ell_a}^{+}$ (in the case $r=1$; if $r\neq 1$ the proof is now complete).  The corresponding matrix element for $\partial$ can then likewise be shown to vanish by a local Floer homology argument: here we view $F_{a}^{\ep}$ as a $C^2$-small perturbation of $H_0=f_a\circ L+\delta h\circ \pi$ and use for $N$ the circle of radius $l_a$ in the fiber over the maximum $p_1$ of $h$.  $CF^{loc}_{*}(F_{a}^{\ep}, f_a\circ L+\delta h\circ \pi,\mathcal{U})$ then has $\gamma_{1,\ell_a}^{+}$ and $\gamma_{1,\ell_a}^{-}$ as its only two generators and has homology equal (at least as a relatively $(\mathbb{Z}/2)$-graded group) to the singular homology of the circle, in view of which $\gamma_{1,\ell_a}^{-}$ represents a nontrivial class in local Floer homology and an argument identical to that used in the $c_1=0$ case then  proves the result.

\subsection{Proof of Proposition \ref{p2}} We begin with a few more general considerations.  If $G\co (\mathbb{R}/\mathbb{Z})\times E(R)\to \mathbb{R}$ is a nondegenerate Hamiltonian and  if $\mu\in\mathbb{R}$ then the Hamiltonian vector fields of the Hamiltonians $G$ and $G+\mu$ coincide.  Hence their periodic orbits also coincide, while for any $[\gamma,w]\in\tilde{\mathcal{L}}$ we have $\mathcal{A}_{G+\mu}([\gamma,w])=\mathcal{A}_G([\gamma,w])-\mu$.  Assume that $G$ has the form (\ref{hform}) and that $a<b$  with $b-a$ small enough as to allow for the construction of $CF_{*}^{[a,b]}(G)$ as in Section 2.2.   Then there is a tautological identification \begin{equation} \label{taut}CF_{*}^{[a,b]}(G+\mu)\cong CF_{*}^{[a+\mu,b+\mu]}(G).\end{equation} 
\begin{lemma}\label{p2lemma} Let $G,G'\co (\mathbb{R}/\mathbb{Z})\times E(R)\to \mathbb{R}$ be two Hamiltonians of the form 
(\ref{hform}), let $\mu>0$, and suppose that, for all $t\in\mathbb{R}/\mathbb{Z}$ and all $x\in E(R)$, we have \[ G(t,x)\leq G'(t,x)\leq G(t,x)+\mu.\] If $J_t\in\mathcal{J}^{reg}(G)\cap \mathcal{J}^{reg}(G')$, let $(H^{s}_{1},J_{s,t})$ be a regular monotone homotopy from $(G,J_t)$ to $(G',J_t)$, and let $(H^{s}_{2},J'_{s,t})$ be a regular monotone homotopy from $(G',J_t)$ to $(G+\mu,J_t)$.  Then, with respect to the identification (\ref{taut}), the map \[ \Phi_{H^{s}_{2},J'_{s,t}}\circ\Phi_{H^{s}_{1},J_{s,t}}\co CF_{*}^{[a,b]}(G)\to  CF_{*}^{[a+\mu,b+\mu]}(G)\] is chain homotopic to the homomorphism \[ \Pi_{[a+\mu,b]}^{[a,b]}\co  CF_{*}^{[a,b]}(G)\to  CF_{*}^{[a+\mu,b+\mu]}(G)\] defined  on the generators $[\gamma,w]$ of $ CF_{*}^{[a,b]}(G)$ by \[ 
\Pi_{[a+\mu,b]}^{[a,b]}([\gamma,w])=\left\{\begin{array}{ll} [\gamma,w] & a+\mu\leq \mathcal{A}_G([\gamma,\mu])\leq b \\ 0 & a\leq \mathcal{A}_G([\gamma,\mu])<a+\mu.\end{array}\right.\]

\end{lemma}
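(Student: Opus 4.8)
The plan is to reduce the assertion to the computation of the continuation map of a single, particularly transparent monotone homotopy from $(G,J_t)$ to $(G+\mu,J_t)$: the one along which the Hamiltonian changes only by an additive constant. To prepare for this I would first record the observations behind the identification (\ref{taut}). Since $\iota_{X_{G+\mu}}\omega=d(G+\mu)=dG=\iota_{X_G}\omega$, the Hamiltonians $G$ and $G+\mu$ have the same $1$-periodic orbits and the same linearized return maps, hence the same Maslov indices; the Floer equation (\ref{floereqn}) for $(G+\mu,J_t)$ is literally the same equation as for $(G,J_t)$; and $\mathcal{A}_{G+\mu}=\mathcal{A}_G-\mu$. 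In particular the path $J_t$ of the hypothesis, being regular for $G$, is also regular for $G+\mu$, so it makes sense to speak of regular monotone homotopies from $(G,J_t)$ to $(G+\mu,J_t)$; and a generator $[\gamma,w]$ of $CF^{[a,b]}_{*}(G)$ is identified under (\ref{taut}) with a generator of $CF^{[a,b]}_{*}(G+\mu)$ precisely when $\mathcal{A}_G([\gamma,w])-\mu\in[a,b]$ as well, i.e.\ when $\mathcal{A}_G([\gamma,w])\in[a+\mu,b]$.

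Next, the concatenation of the homotopies $(H^s_1,J_{s,t})$ and $(H^s_2,J'_{s,t})$, inserting a long neck at their common value $G'$, is a monotone homotopy from $(G,J_t)$ to $(G+\mu,J_t)$, and by the Floer gluing theorem its continuation chain map is chain homotopic to $\Phi_{H^s_2,J'_{s,t}}\circ\Phi_{H^s_1,J_{s,t}}$; the compactness needed here is exactly that used in Section 2.2, since along any continuation trajectory appearing in the definition $\mathcal{A}$ decreases, so the energy is at most $b-a<D\alpha^4<\hbar$ and Theorem \ref{c0} applies. By the usual homotopy-of-homotopies argument (as in Theorem 4 of \cite{F}), any two regular monotone homotopies from $(G,J_t)$ to $(G+\mu,J_t)$ induce chain homotopic continuation maps. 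Hence it suffices to exhibit one regular monotone homotopy from $(G,J_t)$ to $(G+\mu,J_t)$ and to identify its continuation map with $\Pi^{[a,b]}_{[a+\mu,b]}$.

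For this I would take $J_{s,t}=J_t$ and $H^s=G+\beta(s)\mu$, where $\beta\co\mathbb{R}\to[0,1]$ is smooth and nondecreasing with $\beta\equiv 0$ near $-\infty$ and $\beta\equiv 1$ near $+\infty$. This is a monotone homotopy: $\partial_s H^s=\beta'(s)\mu\geq 0$; each $H^s$ again has the form (\ref{hform}), only the constant $B$ changing, with the term $\delta h$ independent of $s$; the vector fields $X_{H^s}=X_G$ restrict identically to $E(\alpha;R)$; and it is stationary at $(G,J_t)$ for $s\ll 0$ and at $(G+\mu,J_t)$ for $s\gg 0$. Since $X_{H^s}=X_G$ for all $s$, the homotopy equation (\ref{htopyeqn}) for this homotopy is nothing but the Floer equation (\ref{floereqn}) of $(G,J_t)$. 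Arguing as in the proof of Proposition \ref{ipluslower}: for a $1$-periodic orbit $\gamma$ of $X_G$ the constant cylinder $u(s,t)=\gamma(t)$ is a solution of index $\mu_G([\gamma,w])-\mu_{G+\mu}([\gamma,w])=0$, cut out transversally since the linearization at a constant cylinder over a nondegenerate orbit is invertible; conversely, any non-constant solution of (\ref{floereqn}) for $(G,J_t)$ joining orbits of equal Maslov index would, after dividing by the free $\mathbb{R}$-action of $s$-translation, lie in a Floer moduli space of dimension $-1$, which is empty because $J_t\in\mathcal{J}^{reg}(G)$. So the homotopy is regular and the only trajectories counted by its continuation map are the constant cylinders, which forces the input and output generators to coincide. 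Therefore, under (\ref{taut}), this continuation map sends a generator $[\gamma,w]$ of $CF^{[a,b]}_{*}(G)$ to itself when $[\gamma,w]$ is also a generator of $CF^{[a,b]}_{*}(G+\mu)$ — equivalently, by the first paragraph, when $\mathcal{A}_G([\gamma,w])\in[a+\mu,b]$ — and to $0$ otherwise; that is, it equals $\Pi^{[a,b]}_{[a+\mu,b]}$. Combined with the reduction of the second paragraph, this proves the lemma.

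The step I expect to require the most care is the verification that the constant homotopy $(G+\beta(s)\mu,J_t)$ is a \emph{regular} monotone homotopy in the sense of Section 2.2 — i.e.\ that the $s$-independent path $J_t$ lies in $\tilde{\mathcal{J}}^{reg}(H^s)$ — together with the accompanying claim that there are no non-constant index-zero trajectories to overlook; but both follow from the observation that here the homotopy equation coincides with the Floer equation of $(G,J_t)$, for which $J_t$ was assumed regular. The remaining ingredients — the compactness of the relevant trajectory spaces and the homotopy-of-homotopies argument identifying the composition with a direct continuation map — are entirely standard within the framework already established in Section 2.
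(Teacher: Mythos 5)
Your proposal is correct and follows essentially the same approach as the paper's proof: glue the two monotone homotopies, invoke the homotopy-of-homotopies argument to reduce to the special constant-vector-field homotopy $G+\beta(s)\mu$ with $J_{s,t}\equiv J_t$, observe that the continuation equation then coincides with the Floer equation of $(G,J_t)$ so only constant cylinders contribute, and identify the resulting map with $\Pi^{[a,b]}_{[a+\mu,b]}$. The preparatory remarks you record about the identification (\ref{taut}) are a mild elaboration rather than a different route.
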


\begin{proof} As a result of the definition of a monotone homotopy, if $S>0$ is large enough, one obtains a smooth path $(H^{s}(S),J_{s,t}(S))$ by setting $(H^{s}(S),J_{s,t}(S))$ equal to $(G',J_t)$ for $|s|<S$, to $(H^{s+2S}_{1},J_{s+2S,t})$ for $s\leq -S$, and to $(H^{s-2S}_{2},J'_{s-2S,t})$ for $s>S$.  $(H^{s}(S),J_{s,t}(S))$  is then a monotone homotopy from $(G,J_t)$ to $(G+\mu,J_t)$, and a standard gluing argument shows that, once $S>0$ is sufficiently large, this monotone homotopy is regular and has the property that the induced map \[ \Phi_{H^{s}(S),J_{s,t}(S)}\co CF_{*}^{[a,b]}(G)\to CF_{*}^{[a,b]}(G+\mu)\cong CF^{[a+\mu,b+\mu]}_{*}(G)\] is equal to $\Phi_{H^{s}_{2},J'_{s,t}}\circ\Phi_{H^{s}_{1},J_{s,t}}$.  

A different monotone homotopy from $(G,J_t)$ to $(G+\mu,J_t)$ is given by $(G+\mu\chi(s),J_t)$, where $\chi\co \mathbb{R}\to [0,1]$ is a smooth monotone increasing function with $\chi(s)=0$ for $s<-1$ and $\chi(s)=1$ for $s>1$.  Since the Hamiltonian vector field of $G+\mu\chi(s)$ is independent of $s$, the map $\Phi_{G+\mu\chi(s),J_t}$ induced by this monotone homotopy counts index $0$ solutions $u\co \mathbb{R}\times(\mathbb{R}/\mathbb{Z})\to E(R)$ to the usual Floer equation $\frac{\partial u}{\partial s}+J_t\left(\frac{\partial u}{\partial t}-X_G(t,u(s,t))\right)=0.$  But since $J_t\in  \mathcal{J}^{reg}(G)$ the only index-zero solutions to this equation have $u(s,t)=\gamma(t)$ for some one-periodic orbit $\gamma$ of $X_G$ (furthermore, the nondegeneracy of $G$ ensures that the linearizations of the Floer equation at these solutions are surjective, so that $(G+\mu\chi(s),J_t)$ is a regular monotone homotopy).  So the matrix elements $\langle \Phi_{G+\mu\chi(s),J_t}([\gamma,w]),[\gamma',w']\rangle$ are equal to $1$ whenever $[\gamma,w]=[\gamma',w']\in CF_{*}^{[a,b]}(G)\cap CF_{*}^{[a,b]}(G+\mu)$ and are equal to $0$ otherwise.  
But this is equivalent to saying that $\Phi_{G+\mu\chi(s),J_t}=\Pi_{[a+\mu,b]}^{[a,b]}$.

Since $(H^{s}(S),J_{s,t}(S))$ ($S\gg 0$) and $(G+\mu\chi(s),J_t)$ are both monotone homotopies from $(G,J_t)$ to $(G+\mu,J_t)$, the induced maps $\Phi_{H^{s}(S),J_{s,t}(S)}$ and $\Phi_{G+\mu\chi(s),J_t}$ are chain homotopic by a standard argument involving a homotopy connecting these two monotone homotopies.  So the lemma follows from the facts that $\Phi_{H^{s}(S),J_{s,t}(S)}=\Phi_{H^{s}_{2},J'_{s,t}}\circ\Phi_{H^{s}_{1},J_{s,t}}$ and $\Phi_{G+\mu\chi(s),J_t}=\Pi_{[a+\mu,b]}^{[a,b]}$.

(See, \emph{e.g.}, Section 6 of \cite{SZ} for the arguments that we have referred to as standard in this proof.)
\end{proof}

We now turn to the body of the proof of Proposition \ref{p2}.  
  The functions \linebreak[6] $f_a,f_b\co [0,\infty)\to [0,\infty)$ are surjective and strictly increasing; hence they are invertible.  Let $\phi_a,\phi_b\co [0,\infty)\to [0,\infty)$ denote their inverses (with respect to composition, of course).  Let $\chi\co \mathbb{R}\to [0,1]$ be a smooth, monotone increasing function with $\chi(s)=0$ for $s\leq -1$ and $\chi(s)=1$ for $s\geq 1$, and define $\phi_s=\chi(s)\phi_a+(1-\chi(s))\phi_b$; for any $s$ this is a strictly increasing function from $[0,\infty)$ onto itself, so let $f_s$ be the inverse of $\phi_s$.  We then have $f_a\leq f_s\leq f_b$, $f_s=f_a$ for $s\geq 1$, $f_s=f_b$ for $s\leq 1$, and $\frac{\partial f_s}{\partial s}\geq 0$. 
For $F_{s}^{\ep}$ we take the $s$-parametrized family $(f_s\circ L+\delta h\circ\pi)^{\ep}$ (where $\ep$ is appropriately small and the notation means that we perturb $f_s\circ L+\delta h\circ \pi$ using the parameter $\ep$ as in the previous section).  

Now since in the only regions where its derivative ever approaches an integer multiple of $4\pi$ the function $f_s$ has a graph which is just a translated-to-the-right-version of the graph of $f_a$ in the corresponding region, the $1$-periodic orbits of any given $X_{F_{s}^{\ep}}$ correspond precisely to the one-periodic orbits of $X_{F_{a}^{\ep}}$ (but with generally larger values of the parameter $\ell$).  In particular, each $F_{s}^{\ep}$ is nondegenerate a nondegenerate Hamiltonian and has each $1$-periodic orbit contained in a fiber over a critical point of $h$, and its only nonconstant $1$-periodic orbit $\gamma$ such that $\mu_{F_{s}^{\ep}}([\gamma,w_0])=2r$ is the orbit $\gamma=\gamma_{1,\ell(s)}^{-}$, where $\ell(s)$ decreases from $\ell_b$ when $s\leq -1$ to $\ell_a$ when $s\geq 1$.  

Let $\mu>0$ have the property that $c(\rho)+\mu<\mathcal{A}_{F_{a}^{\ep}}([\gamma_{1,\ell_a}^{-},w_0])$.  For any $s$, the proof of Proposition \ref{p1} applies just as well to $F^{\ep}_{s}$ and $[\gamma_{1,\ell(s)}^{-},w_0]$ as it does to $F_{a}^{\ep}$ and $[\gamma_{1,\ell_a}^{-},w_0]$.  Therefore, for each $s$ and for any $J'_t\in \mathcal{J}^{reg}(F^{\ep}_{s})$, we have $\partial_{F^{\ep}_{s},J'_t}^{[c(\rho),d(\rho)]}[\gamma_{1,\ell(s)}^{-},w_0]=0$ and, for any $[\gamma',w']\in CF^{[c(\rho),d(\rho)]}_{2r+1}(F^{\ep}_{s})$, \[ \langle \partial_{F^{\ep}_{s},J'_t}[\gamma',w'],[\gamma_{1,\ell(s)}^{-},w_0]\rangle=0.\]  

  Choose \[ -1=s_1<s_1<\cdots<s_{N+1}=1\mbox{ such that }0\leq F_{s_{j+1}}^{\ep}(t,x)-F_{s_{j}}^{\ep}(t,x)<\mu \] for all $j,t,x$, and let $J_t\in \cap_{j=1}^{N+1}\mathcal{J}^{reg}(F_{s_j}^{\ep})$.  
Let $(G_{j,1}^{s},J_{j,s,t})$ be a regular monotone homotopy from $(F_{s_{j}}^{\ep},J_t)$ to $(F_{s_{j+1}}^{\ep},J_t)$ and let $(G_{j,2}^{s},J'_{j,s,t})$ be a regular monotone homotopy from $(F_{s_{j+1}}^{\ep},J_t)$ to $(F_{s_j}^{\ep}+\mu,J_t)$, with $G_{j,1}^{s}$ and $G_{j,2}^{s}$ chosen to be of the same form as the $H^{s,\ep}$ in Corollary \ref{bigcor}.    We then obtain chain maps  \[ \Phi_{G_{j,1}^{s},J_{j,s,t}}\co CF^{[c(\rho),d(\rho)]}_{*}(F_{s_j}^{\ep})\to CF^{[c(\rho),d(\rho)]}_{*}(F_{s_{j+1}}^{\ep}), \]\[ \Phi_{G_{j,2}^{s},J'_{j,s,t}}\co CF^{[c(\rho),d(\rho)]}_{*}(F_{s_{j+1}}^{\ep})\to CF^{[c(\rho),d(\rho)]}_{*}(F_{s_j}^{\ep}+\mu)\cong CF^{[c(\rho)+\mu,d(\rho)+\mu]}_{*}(F_{s_j}^{\ep}). \]

Lemma \ref{p2lemma} shows that   $\Phi_{G_{j,2}^{s},J'_{j,s,t}}\circ \Phi_{G_{j,1}^{s},J_{j,s,t}}$ is chain homotopic to \[\Pi^{[c(\rho),d(\rho)]}_{[c(\rho)+\mu,d(\rho)]}\co CF_{*}^{[c(\rho),d(\rho)]}(F_{s_j}^{\ep})\to CF_{*}^{[c(\rho)+\mu,d(\rho)+\mu]}(F_{s_j}^{\ep}).\]  Now by our choice of $\mu$ and the definition of $F_{s}^{\ep}$ we have \[ d(\rho)> \mathcal{A}_{F^{\ep}_{b}}([\gamma_{1,\ell_b}^{-},w_0])\geq\mathcal{A}_{F^{\ep}_{s}}([\gamma_{1,\ell(s)}^{-},w_0])\geq 
 \mathcal{A}_{F^{\ep}_{a}}([\gamma_{1,\ell_a}^{-},w_0])>c(\rho)+\mu,\] so that \[ \Pi^{[c(\rho),d(\rho)]}_{[c(\rho)+\mu,d(\rho)]}[\gamma_{1,\ell(s)}^{-},w_0]=[\gamma_{1,\ell(s)}^{-},w_0].\]
So where $\mathcal{K}_j$ is the chain homotopy produced by Lemma \ref{p2lemma}, we have\begin{align} \label{chtopy} 
 \Phi_{G_{j,2}^{s},J'_{j,s,t}}&\circ \Phi_{G_{j,1}^{s},J_{j,s,t}}([\gamma_{1,\ell(s_j)}^{-},w_0])\\&=[\gamma_{1,\ell(s_j)}^{-},w_0]+\partial_{F_{s_j}^{\ep},J_t}\mathcal{K}_j[\gamma_{1,\ell(s_j)}^{-},w_0]  + \mathcal{K}_j\partial_{F_{s_j}^{\ep},J_t}[\gamma_{1,\ell(s_j)}^{-},w_0].\nonumber\end{align}  But as noted earlier, 
$\partial_{F^{\ep}_{s_j},J'_t}[\gamma_{1,\ell(s_j)}^{-},w_0]=0$ and, for any $[\gamma',w']\in CF^{[c(\rho),d(\rho)]}_{2r+1}(F^{\ep}_{s_j})$, $ \langle \partial_{F^{\ep}_{s_j},J_t}[\gamma',w'],[\gamma_{1,\ell(s)}^{-},w_0]\rangle=0.$  Thus (\ref{chtopy}) shows that
\[ \langle \Phi_{G_{j,2}^{s},J_t}\circ \Phi_{G_{j,1}^{s},J_t}([\gamma_{1,\ell(s_j)}^{-},w_0]),[\gamma_{1,\ell(s_j)}^{-},w_0]\rangle=1.\]
But since for $i=j,j+1$, $\gamma_{1,\ell(s_i)}^{-}$ is the only $1$-periodic orbit $\gamma$ of $X_{F_{s_i}^{\ep}}$ having both $\mu_{F_{s_i}^{\ep}}([\gamma,w_0])=2r$ and $\mathcal{A}_{F_{s_i}^{\ep}}([\gamma,w_0])<d(\rho)$ (the constant orbit at the maximum of $h$ has the right grading but has action approximately zero), Corollary \ref{bigcor} implies that the only possibly-nonzero matrix elements \[ \langle \Phi_{G_{j,1}^{s},J_{j,s,t}}([\gamma_{1,\ell(s_j)},w_0]),[\gamma,w]\rangle \mbox{ or } \langle \Phi_{G_{j,2}^{s},J'_{j,s,t}}[\gamma',w'],[\gamma_{1,\ell(s_j)},w_0]\rangle\]  come from $[\gamma,w]=[\gamma',w']=
[\gamma_{1,\ell(s_{j+1})},w_0]$.  From this it follows that, for some $c_j, c'_j$ with $c_jc'_j=1$, we have \[ 
\Phi_{G_{j,1}^{s},J_t}[\gamma_{1,\ell(s_j)},w_0]=c_j[\gamma_{1,\ell(s_{j+1})},w_0], \quad \Phi_{G_{j,2}^{s},J_t}[\gamma_{1,\ell(s_{j+1})},w_0]=c'_j[\gamma_{1,\ell(s_{j})},w_0].  \] Since we are working over $\mathbb{Z}_2$ we must have $c_j=c'_j=1$.  

From this it follows that \[ \left(\Phi_{G_{N,1}^{s},J_{N,s,t}}\circ\cdots\circ \Phi_{G_{1,1}^{s},J_{1,s,t}}\right)([\gamma_{1,\ell_b}^{-},w_0])=[\gamma_{1,\ell_a}^{-},w_0].\]
Meanwhile,  Corollary \ref{bigcor} together with considerations of action and grading show that, for $\bar{J}_{s,t}\in\mathcal{J}^{reg}(F_{s}^{\ep})$ sufficiently $C^2$-close to the constant path at $\bar{J}$, we have $\Phi_{F_{s}^{\ep},\bar{J}_{s,t}}[\gamma_{1,\ell_b}^{-},w_0]=x[\gamma_{1,\ell_a}^{-},w_0]$ for some $x\in\mathbb{Z}_2$. 
Now another standard gluing and homotopy-of-homotopies argument shows that $\Phi_{G_{N,1}^{s},J_{N,s,t}}\circ\cdots\circ \Phi_{G_{1,1}^{s},J_{1,s,t}}$ is chain homotopic to $\Phi_{F_{s}^{\ep},\bar{J}_{s,t}}$. Since $[\gamma_{1,\ell_b}^{-},w_0]$ is a cycle and 
$[\gamma_{1,\ell_a}^{-},w_0]$ is homologically nontrivial this is possible only if $x=1$, completing the proof of the proposition.

\subsection{Proof of Proposition \ref{p3}}  The foregoing implies that the image of \linebreak[6] $\Psi_{F_{b}^{\ep}}^{H'}\co HF^{[c(\rho),d(\rho)]}_{2r}(F^{\ep}_{b})\to HF^{[c(\rho),d(\rho)]}_{2r}(H')$ is nontrivial for nondegenerate Hamiltonians $H'$ with $F^{\ep}_{b}\leq H'\leq F^{\ep}_{a}$; however, in the spherically irrational case this is no surprise, since the Floer complex of such an  $H'$ has generators in grading $2r$ corresponding to critical points of $h$ on the zero section with nontrivial cappings.  We shall in fact show that, for some $\ast\in \{2r-1,2r\}$, $CF^{[c(\rho),d(\rho)]}_{*}(H')$ must, for suitable nondegenerate $H'$ near $H$, have at least one generator not accounted for by the critical points of $h$.

       Suppose, to get a contradiction, that $X_{H}$ had no $1$-periodic orbits $\gamma$ contained in $\{3\rho-\beta\leq K\leq 3\rho+\beta\}$ having capping discs $w$ with $-2r\leq \Delta([\gamma,w],H)\leq 2m+1$.  As noted earlier, assuming that $\delta>0$ is small enough, the construction of $H$ guarantees that any $1$-periodic orbit of $X_H$ which intersects $\{3\rho-\beta\leq K\leq 3\rho+\beta\}$ is in fact contained in $\{3\rho-\beta<K<3\rho+\beta\}$. The continuity of the Salamon--Zehnder index (and the Arzel\`a-Ascoli theorem) hence implies  that if $\|H'-H\|_{C^2}$ is sufficiently small then $X_{H'}$ also has no $1$-periodic orbits $\gamma$ intersecting $\{3\rho-\beta\leq K\leq 3\rho+\beta\}$ and having capping discs $w$ with $-2r\leq \Delta([\gamma,w],H)\leq 2m+1$.   
       Take for $H'$ a sufficiently small nondegenerate perturbation of $H$, with $H'-H$ supported in $\{3\rho-\beta\leq K\leq 3\rho+\beta\}$ (since the construction of $H$ shows that its constant $1$-periodic orbits are all nondegenerate, while all of its nonconstant $1$-periodic orbits are (for small $\delta$) contained in the interior of $\{3\rho-\beta\leq K\leq 3\rho+\beta\}$,
a standard argument shows that nondegenerate Hamiltonians will form a residual subset of the space of time dependent $H'$ coinciding with $H$ outside $\{3\rho-\beta\leq K\leq 3\rho+\beta\}$).  If $[\gamma',w']$ is any generator of $CF_{*}^{[c(\rho),d(\rho)]}(H')$, then using  Equation 2.2 of \cite{GG2} (and the fact that the Maslov index $\mu_{H'}$ is related to the Conley--Zehnder index $\mu_{CZ}$ by $\mu_{CZ}=m+r-\mu_{H'}$) we have $0\leq \Delta([\gamma,w],H')+\mu_{H'}([\gamma,w])\leq 2(m+r)$.  So our assumption implies that if $[\gamma',w']$ has $\mu_{H'}([\gamma',w'])\in \{2r-1,2r\}$ then $\gamma'$ does not intersect $\{3\rho-\beta\leq K\leq 3\rho+\beta\}$.  
Now $H'$ coincides with $H$  outside $\{3\rho-\beta\leq K\leq 3\rho+\beta\}$, and so the only generators $[\gamma',w']$ for  $CF_{*}^{[c(\rho),d(\rho)]}(H')$ in gradings $2r-1$ and $2r$ are those where $\gamma'$ is the constant orbit at a critical point of $h\co M\to \mathbb{R}$ on the zero section $M\subset E(R)$.

Now let $\bar{f}\co [0,\infty)\to [0,\infty)$ be a strictly increasing function with $0\leq\bar{f}\leq f_b$ everywhere, and $0<\bar{f}'(s)\leq 2\pi$ for all $s$, with $\bar{f}'(s)=2\pi$ for $s\geq \frac{30 \rho}{C_1}$.  Let $\bar{F}=\bar{f}\circ L+\delta h\circ \pi$.  The only $1$-periodic orbits of $X_{\bar{F}}$ are then the constant orbits at the critical points of $h$ on the zero section $M\subset E(R)$.  
In particular, \emph{in degrees $*=2r-1,2r$, we have $CF_{*}^{[c(r),d(r)]}(H')=CF_{*}^{[c(r),d(r)]}(\bar{F})$ as $\mathbb{Z}_2$-modules}.

We have $\bar{F}\leq F_{b}^{\ep}\leq H'$, so for generic $J_t$ close to $\bar{J}$ let $(H^{s}_{1},J_{s,t})$ be a regular monotone homotopy from $(\bar{F},J_t)$ to $(F_{b}^{\ep},J_t)$ and let $(H^{s}_{2},J'_{s,t})$ be a regular monotone homotopy from   $(F_{b}^{\ep},J_t)$ to $(H',J_t)$.  (For definiteness, set $H^{s}_{1}=(1-\chi(s))\bar{F}+\chi(s)F_{b}^{\ep}$ and $H^{s}_{2}=(1-\chi(s))F_{b}^{\ep}+\chi(s)H'$, where $\chi\co\mathbb{R}\to [0,1]$ is a  smooth monotone increasing function with $\chi(s)=0$ for $s<-1$ and $\chi(s)=1$ for $s>1$).  We then have chain maps \[ 
\xymatrix{ \oplus_{k=2r-1}^{2r}CF_{k}^{[c(\rho),d(\rho)]}(\bar{F})\ar[r]^{\Phi_{H^{s}_{1},J_{s,t}}}&\oplus_{k=2r-1}^{2r}CF_{k}^{[c(\rho),d(\rho)]}(F_{b}^{\ep})\ar[r]^{\Phi_{H^{s}_{2},J'_{s,t}}}& \oplus_{k=2r-1}^{2r}CF_{k}^{[c(\rho),d(\rho)]}(H')}.\]  Now by construction the critical points $p_j$ of $h$ on $M$ are nondegenerate critical points of each Hamiltonian $H^{s}_{1}$, $H^{s}_{2}$, and the (small) choices of $\delta$ and $\eta$ ensure that the norms of the Hessians of $H^{s}_{1}$, $H^{s}_{2}$ at these critical points are smaller than $1$.  Further, we have $H^{s}_{1}(p_j)=H^{s}_{2}(p_j)=\delta h(p_j)$ for each $s$ and $j$.  Hence the constant paths $\gamma_{j,0}$ at $p_j$ satisfy the hypotheses of Proposition \ref{ipluslower} for the monotone homotopies $(H^{s}_{1},J_{s,t})$ and $(H^{s}_{2},J'_{s,t})$.  But $\oplus_{k=2r-1}^{2r}CF_{k}^{[c(r),d(r)]}(H')$ and $\oplus_{k=2r-1}^{2r}CF_{k}^{[c(r),d(r)]}(\bar{F})$ are both just equal to the span over $\mathbb{Z}_2$ of the various generators $[\gamma_{j,0},w]$ having appropriate action and grading. Therefore, Proposition \ref{ipluslower} shows that 
  the map \[  \Phi_{H^{s}_{2},J'_{s,t}}\circ \Phi_{H^{s}_{1},J_{s,t}}\co
\oplus_{k=2r-1}^{2r}CF_{k}^{[c(\rho),d(\rho)]}(\bar{F})\to \oplus_{k=2r-1}^{2r}CF_{2r}^{[c(\rho),d(\rho)]}(H')\] has the form $I+D$ where $I$ is the identity and $D$ has the property that, for some $\mu>0$, we have $\mathcal{A}_{\bar{F}}([\gamma,w])\geq \mathcal{A}_{\bar{F}}([\gamma',w'])+\mu$ whenever $\langle D[\gamma,w],[\gamma',w']\rangle\neq 0$.  Now any such map is invertible, with inverse given by $A=\sum_{j=0}^{\infty}(-D)^{j}$  (this is a finite sum, since $D^j=0$ if $j\mu>d(\rho)-c(\rho)$).  For $x\in CF_{2r}^{[c(\rho),d(\rho)]}(H')$ we have \[ A(\partial_{H',J_t}x)=A(\partial_{H',J_t}\Phi_{H^{s}_{2},J'_{s,t}}\circ \Phi_{H^{s}_{1},J_{s,t}}Ax)=A(\Phi_{H^{s}_{2},J'_{s,t}}\circ \Phi_{H^{s}_{1},J_{s,t}}\partial_{\bar{F},J_t}Ax)=
\partial_{\bar{F},J_t}Ax.\]

For any nondegenerate Hamiltonian $G$, define $\mathcal{L}_G\co CF_{*}^{[c(\rho),d(\rho)]}(G)\to\mathbb{R}\cup\{-\infty\}$ by \[ \mathcal{L}_G\left(\sum c_{[\gamma,w]}[\gamma,w]\right)=\max\left\{\mathcal{A}_G([\gamma,w])|c_{[\gamma,w]}\neq 0\right\}\] (where the maximum of the empty set is defined to be $-\infty$).  Note that if $(G^s,\bar{J}_{s,t})$ is a regular monotone homotopy from $(G^-,J^{-}_{t})$ to $(G^+,J^{+}_{t})$, we have $\mathcal{L}_{G^+}(\Phi_{G^s,\bar{J}_{s,t}}(x))\leq \mathcal{L}_{G^-}(x)$ for all $x\in CF_{*}^{[c(\rho),d(\rho)]}(G^-)$.  Note also that the map $A=\sum_{j=0}^{\infty}(-D)^j$ of the previous paragraph satisfies $\mathcal{L}_{\bar{F}}(Ax)=\mathcal{L}_{H'}(x)$ for all $x\in \oplus_{k=2r-1}^{2r}CF_{k}^{[c(\rho),d(\rho)]}(H')$.

  Write \[ c=\Phi_{H^{s}_{2},J'_{s,t}}([\gamma_{1,\ell_b}^{-},w_0])\in CF_{2r}^{[c(\rho),d(\rho)]}(H')\] and \[ c'=\Phi_{H^{s}_{1},J_{s,t}}(A(c))\in CF_{2r}^{[c(\rho),d(\rho)]}(F^{\ep}_{b}).\]  Then \[ \Phi_{H^{s}_{2},J'_{s,t}}\left([\gamma_{1,\ell_b}^{-},w_0]-c'\right)=c-c=0.\]  Since $A$ and the various chain maps $\Phi$ intertwine the relevant Floer boundary operators, we have $\partial_{F_{b}^{\ep},J_t}c'=0$.
  
Meanwhile, since $[\gamma_{1,\ell_b}^{-},w_0]$ is not a degree-$2r$ generator for $CF_{2r}^{[c(\rho),d(\rho)]}(H')$, any solution $u$ of (\ref{htopyeqn}) contributing to a matrix element $\langle\Phi_{H^{s}_{2},J'_{s,t}}([\gamma_{1,\ell_b}^{-},w_0]),[\gamma,w]\rangle$ must have $\int_{\mathbb{R}\times (\mathbb{R}/\mathbb{Z})}\left|\frac{\partial u}{\partial s}\right|^2 dsdt>0$.  In view of this, (\ref{htopyenergy}) implies that \[ \mathcal{L}_{H'}(c)<\mathcal{L}_{F_{b}^{\ep}}([\gamma_{1,\ell_b}^{-},w_0]).\]  Hence also  
\[ \mathcal{L}_{F_{b}^{\ep}}(c')<\mathcal{L}_{F_{b}^{\ep}}([\gamma_{1,\ell_b}^{-},w_0]).\]

 
But then  since $[\gamma_{1,\ell_b}^{-},w_0]$ is the only generator of $CF^{[c(\rho),d(\rho)]}_{2r}(F_{b}^{\ep})$ with the fiberwise capping (all other fiberwise-capped generators have Maslov indices unequal to $2r$ or actions outside $[c(\rho),d(\rho)]$, as we have seen earlier), we have \begin{equation}\label{cnontriv} [\gamma_{1,\ell_b}^{-},w_0]-c'=[\gamma_{1,\ell_b}^{-},w_0]-\sum_{i}c'_i[\gamma_i,w_0\#A_i] \end{equation} where $c'_i\in\mathbb{Z}_2$ and each $A_i\neq 0$.  

Now where $(H^{s}_{3},J''_{s,t})$ is a monotone homotopy from $(H',J_t)$ to $(F_{a}^{\ep},J''_t)$ (for some generic $J''_t$ near $\bar{J}$), we have that $\Phi_{H^{s}_{3},J''_{s,t}}\circ \Phi_{H^{s}_{2},J'_{s,t}}$ induces the same map $\Psi_{F_{b}^{\ep}}^{F_{a}^{\ep}}$ on homology as does $\Phi_{F_{s}^{\ep},\bar{J}_{s,t}}$ (where $(F_{s}^{\ep},\bar{J}_{s,t})$ is as in  Proposition \ref{p2}).  By (\ref{cnontriv}), Corollary \ref{bigcor}, and Proposition \ref{p2}, we have  \[ \Phi_{F_{s}^{\ep},\bar{J}_{s,t}}\left([\gamma_{1,\ell_b}^{-},w_0]-c'\right)=  [\gamma_{1,\ell_a}^{-},w_0]+c''\] where $c''$ has form $\sum_i c''_i[\gamma''_i,w_0\#A_i]$ and every $A_i\neq 0$.  Now if the chain on the right hand side above were nullhomologous, there would need to be a nonzero matrix element $\langle \partial_{F_{a}^{\ep},J_t}[\gamma,w], [\gamma_{1,\ell_a}^{-},w_0]\rangle$, whereas we have shown in  Proposition \ref{p1} that all such matrix elements are zero.  So $\Phi_{F_{s}^{\ep},\bar{J}_{s,t}}\left([\gamma_{1,\ell_b}^{-},w_0]-c'\right)$ is homologically nontrivial, which \textbf{contradicts} the facts that $\Phi_{F_{s}^{\ep},\bar{J}_{s,t}}$ induces the map $\Psi_{F_{b}^{\ep}}^{F_{a}^{\ep}}=\Psi_{H'}^{F_{a}^{\ep}}\circ \Psi_{F_{b}^{\ep}}^{H'}$ on homology, and that \[ \Psi_{F_{b}^{\ep}}^{H'}([[\gamma_{1,\ell_b}^{-},w_0]-c'])=[\Phi_{H^{s}_{2},J'_{s,t}}([\gamma_{1,\ell_b}^{-},w_0]-c')]=0.\]

\section{Appendix: Background on local Floer homology for clean intersections} 
Let $(P,\Omega)$ be an arbitrary symplectic manifold, and suppose that $H_0\co P\to \mathbb{R}$ is an autonomous Hamiltonian on $P$ inducing a flow $\phi_{H_0}^{t}$ which has the property that fixed point set of $\phi_{H_0}^{1}$ has a connected component $N\subset Fix(\phi_{H_0})$ such that \begin{itemize} \item $N$ is a compact submanifold of $P$, \item $H_0|_N$ is constant, \item there is a $\mathbb{R}/\mathbb{Z}$-action on a neighborhood of $N$, which preserves $N$, has orbits which are contractible in $P$, and has the property that for each $x\in N$ we have $t\cdot x=\phi_{H_0}^{t}(x)$;  and \item for each $x\in N$, $\ker(Id-(d\phi_{H_0})_x)=T_xN$.\end{itemize}    Denote by $\mathcal{L}_0P$ the space of contractible loops in $P$, $\mathcal{N}\subset \mathcal{L}_0P$ the subset consisting of $(\mathbb{R}/\mathbb{Z})$-orbits through  points of $N$, and $\mathcal{U}\supset \mathcal{N}$ the closure of a neighborhood of $\mathcal{N}$, which should be taken to be small in a sense to be specified presently.  Let $U\subset P$ be a tubular neighborhood of $N$, identified with the disc normal bundle to $N$ with projection $\tau\co U\to N$, and taken small enough that the only fixed points of $\phi_{H_0}^{1}$ in $U$ are the points of $N$.  $\mathcal{U}$ is then chosen small enough that every $\gamma\in \mathcal{U}$ has the properties that: (i) $\gamma(t)\in U$ for all $t$, and (ii) with respect  to a $(\mathbb{R}/\mathbb{Z})$-invariant Riemannian metric on $N$, the diameter of the loop  $t\mapsto (\phi_{H_0}^{t})^{-1}(\tau(\gamma(t)))$ is to be less than the injectivity radius of $N$.  (Thus, every $\gamma\in \mathcal{U}$ has $\tau\circ \gamma$ close to a $(\mathbb{R}/\mathbb{Z})$-orbit).

Following [5, p. 581], for a $(\mathbb{R}/\mathbb{Z})$-parametrized family of $\Omega$-tame almost complex structures  $J_t$ and a Hamiltonian $H\co (\mathbb{R}/\mathbb{Z})\times P\to \mathbb{R}$ define \[ \mathcal{M}_{J_t,H}(\mathcal{U})=\left\{u\co\mathbb{R}\times S^1\to P\left|\begin{array}{l}\int_{\mathbb{R}\times S^1}\left|\frac{\partial u}{\partial s}\right|^{2}_{J_t}dsdt<\infty,\, \frac{\partial u}{\partial s}+J_t\left(\frac{\partial u}{\partial t}-X_H\right)=0\\ \mbox{ and }u(s,\cdot)\in \mathcal{U}\mbox{ for all }s\end{array}\right.\right\},\] and \[ \mathcal{S}_{J_t,H}(\mathcal{U})=\{\gamma\in \mathcal{U}|  \gamma=u(0,\cdot)\mbox{ for some }u\in \mathcal{M}_{J_t,H}(\mathcal{U})\}.\]

\begin{prop}\label{invar} We have $\mathcal{S}_{J_t,H_0}(\mathcal{U})=\mathcal{N}$.\end{prop}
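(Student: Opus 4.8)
The plan is to prove the two inclusions $\mathcal{N}\subseteq\mathcal{S}_{J_t,H_0}(\mathcal{U})$ and $\mathcal{S}_{J_t,H_0}(\mathcal{U})\subseteq\mathcal{N}$ separately. The first is immediate: if $\gamma\in\mathcal{N}$, say $\gamma(t)=t\cdot x=\phi_{H_0}^{t}(x)$ with $x\in N$, then since $x\in\mathrm{Fix}(\phi_{H_0}^{1})$ and $H_0$ is autonomous, $\gamma$ is a genuine $1$-periodic orbit of $X_{H_0}$, so the constant cylinder $u(s,t)=\gamma(t)$ solves the Floer equation, has zero energy, and has every slice equal to $\gamma\in\mathcal{N}\subset\mathcal{U}$; hence $\gamma=u(0,\cdot)\in\mathcal{S}_{J_t,H_0}(\mathcal{U})$.

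For the reverse inclusion I would first record the elementary observation that the only $1$-periodic orbits of $X_{H_0}$ whose loops lie in $\mathcal{U}$ are those in $\mathcal{N}$: if $\gamma\in\mathcal{U}$ is such an orbit then, by property (i) of $\mathcal{U}$, $\gamma(0)\in U$ is a fixed point of $\phi_{H_0}^{1}$, so $\gamma(0)\in N$ by the choice of $U$, whence $\gamma(t)=\phi_{H_0}^{t}(\gamma(0))=t\cdot\gamma(0)$ and $\gamma\in\mathcal{N}$. Now take any $u\in\mathcal{M}_{J_t,H_0}(\mathcal{U})$ and set $\gamma=u(0,\cdot)$. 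Since $u$ has finite energy and its image is confined to the precompact set $U$, and since the hypothesis $\ker(\mathrm{Id}-(d\phi_{H_0})_x)=T_xN$ is precisely the statement that $\mathcal{N}$ is a Morse--Bott critical manifold of the action functional $\mathcal{A}_{H_0}$, the standard analysis of Morse--Bott Floer trajectories shows that $u(s,\cdot)$ converges in $C^{\infty}$ as $s\to\pm\infty$ to $1$-periodic orbits $\gamma_{\pm}$ whose loops lie in $\overline{\mathcal{U}}$ — hence, after shrinking $U$ slightly so that the previous observation also applies to orbits meeting $\overline{U}$, $\gamma_{\pm}\in\mathcal{N}$. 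Choosing any capping $w_-$ of $\gamma_-$, the energy identity (as recalled in Section 2) gives $\mathrm{Energy}(u)=\mathcal{A}_{H_0}([\gamma_-,w_-])-\mathcal{A}_{H_0}([\gamma_+,w_-\#u])\ge0$.

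The crucial point is then that $\mathcal{A}_{H_0}$ is locally constant along its critical set. Indeed, if $\lambda\mapsto[\gamma_\lambda,w_\lambda]$ is any $C^1$ path of critical points of $\mathcal{A}_{H_0}$ (that is, each $\gamma_\lambda$ is a $1$-periodic orbit and the cappings vary continuously), then $\frac{d}{d\lambda}\mathcal{A}_{H_0}([\gamma_\lambda,w_\lambda])=\langle d(\mathcal{A}_{H_0})_{[\gamma_\lambda,w_\lambda]},\ \tfrac{d}{d\lambda}[\gamma_\lambda,w_\lambda]\rangle=0$, since the differential of $\mathcal{A}_{H_0}$ vanishes at every critical point. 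On the other hand, because $\mathcal{U}$ deformation retracts onto $\mathcal{N}$ (discussed below), the path $s\mapsto u(s,\cdot)$, which lies in $\mathcal{U}$ and has limits $\gamma_{\pm}\in\mathcal{N}$, is homotopic rel endpoints to a path lying entirely in $\mathcal{N}$; carrying $w_-$ along that path produces a capping $w_+$ of $\gamma_+$ with $w_-\#u\sim w_+$ and with $[\gamma_-,w_-]$ and $[\gamma_+,w_+]$ joined by a path of critical points. Combining, $\mathcal{A}_{H_0}([\gamma_+,w_-\#u])=\mathcal{A}_{H_0}([\gamma_+,w_+])=\mathcal{A}_{H_0}([\gamma_-,w_-])$, so $\mathrm{Energy}(u)=0$. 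Hence $\partial_s u\equiv0$, so $u(s,t)=\gamma(t)$ for a single $1$-periodic orbit $\gamma$ of $X_{H_0}$ whose loop lies in $\mathcal{U}$; by the observation above, $\gamma\in\mathcal{N}$, and therefore $u(0,\cdot)=\gamma\in\mathcal{N}$, completing the proof.

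I expect the main obstacle to be the two analytic/topological inputs used above, rather than the (formal) energy computation. First, the $C^{\infty}$ convergence of $u(s,\cdot)$ at its ends to orbits in $\mathcal{N}$: for this I would cite the standard treatment of finite-energy Floer cylinders asymptotic to a Morse--Bott manifold of periodic orbits. Second, the claim that $\mathcal{U}$ may be assumed to deformation retract onto $\mathcal{N}$ within itself: this need not be imposed as an extra hypothesis, since it follows from the defining conditions (i),(ii) of $\mathcal{U}$. One first retracts radially inside the disc bundle $U$, pushing a loop $\gamma\in\mathcal{U}$ to the loop $\tau\circ\gamma$ in $N$; then, using that $(\phi_{H_0}^{t})^{-1}(\tau(\gamma(t)))$ has diameter below the injectivity radius of $N$, one geodesically contracts that loop to the constant loop at $\tau(\gamma(0))$ and reapplies $\phi_{H_0}^{t}$, arriving at the $(\mathbb{R}/\mathbb{Z})$-orbit through $\tau(\gamma(0))\in N\subset\mathcal{N}$; a direct check shows the intermediate loops still satisfy (i) and (ii), so the homotopy stays in $\mathcal{U}$. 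Everything else — the energy identity and the vanishing of $d\mathcal{A}_{H_0}$ on its critical points — is formal.
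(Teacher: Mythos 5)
Your proof is correct and takes essentially the same approach as the paper: both homotope the Floer cylinder $u$ (via the normal-bundle projection $\tau$ followed by a geodesic shortening inside $N$) to a cylinder whose slices lie in $\mathcal{N}$, and then deduce that the total energy of $u$ vanishes, forcing $u$ to be $s$-independent. Your packaging of the final step --- that $\mathcal{A}_{H_0}$ is locally constant along any $C^1$ path of critical points --- is a slightly more formal rendering of the paper's direct computation that $\int\bar{v}^*\Omega=0$ together with the constancy of $H_0|_N$.
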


\begin{proof} Define $\widetilde{\mathcal{L}_0P}$ to be the space of equivalence classes $[\gamma,w]$ where $\gamma$ is a contractible loop in $P$ and $w\co D^2\to P$ has $w|_{\partial D^2}=\gamma$, with $[\gamma,w]=[\gamma,w']$ if and only if $\int_{D^2}w^*\Omega=\int_{D^2}w'^*\Omega$.  Elements of $\mathcal{M}_{J_t,H_0}(\mathcal{U})$ are then formal negative gradient flow lines for the functional \[ \mathcal{A}_{H_0}([\gamma,w])=-\int_{D^2}w^*\Omega-\int_{0}^{1}H_0(\gamma(t))dt\] on $\widetilde{\mathcal{L}_0P}$.  Since the only fixed points of $\phi_{H_0}^{1}$ in $U$ are on $N$, any $u\in \mathcal{M}_{J_t,H_0}(\mathcal{U})$ is asymptotic to $\mathbb{R}/\mathbb{Z}$-orbits $\gamma^{\pm}=u(\pm\infty,\cdot)$ in $N$.  The extended map $\bar{u}\co [-\infty,\infty]\times (\mathbb{R}/\mathbb{Z})\to U$ is then homotopic rel boundary to $\tau\circ \bar{u}\co [-\infty,\infty]\times (\mathbb{R}/\mathbb{Z})\to N$, and our definition of $\mathcal{U}$ then ensures that, using the exponential map of the metric on $N$, we can obtain a homotopy rel boundary from $\tau\circ \bar{u}$ to the map $\bar{v}\co (s,t)\mapsto \phi_{H}^{t}(\tau(u(s,0)))$.  Hence $\int_{[-\infty,\infty]\times(\mathbb{R}/\mathbb{Z})}\bar{u}^*\Omega=\int_{[-\infty,\infty]\times(\mathbb{R}/\mathbb{Z})}\bar{v}^*\Omega$.  But we have $\frac{\partial \bar{v}}{\partial t}=X_{H_0}$ everywhere, and the image of $\bar{v}$ is contained in $N$, on which $H_0$ is constant, so that \[ \Omega\left(\frac{\partial \bar{v}}{\partial s},\frac{\partial \bar{v}}{\partial t}\right)=-dH_0\left(\frac{\partial \bar{v}}{\partial s}\right)=0,\] and thus \[ \int_{\mathbb{R}\times S^1}u^*\Omega=0.\]  Now if $u$ is a flowline from $[\gamma^-,w^-]$ to $[\gamma^+,w^+]$, we have \[ \mathcal{A}_{H_0}([\gamma^-,w^-])-\mathcal{A}_{H_0}([\gamma^+,w^+])=\int_{\mathbb{R}\times S^1}u^*\Omega+\int_{0}^{1}(H_0(\gamma^+(t))-H_0(\gamma^-(t)))dt.\]  We've just shown that the first term on the right hand side above is zero, while the second term is zero since $\gamma^{\pm}$ are loops in $N$ and $H_0$ is constant 
 on $N$.

Using the familiar formulas \begin{align*} \mathcal{A}_{H_0}([\gamma^-,w^-])-\mathcal{A}_{H_0}([\gamma^+,w^+])&=\int_{\mathbb{R}\times S^1}\left|\frac{\partial u}{\partial s}\right|^{2}_{J_t}dsdt\\&=\int_{\mathbb{R}\times S^1}\left|\frac{\partial u}{\partial t}-X_{H_0}(u(s,t))\right|^{2}_{J_t}dsdt,\end{align*} this shows that our arbitrary $u\in \mathcal{M}_{J,H}(\mathcal{U})$ (which is known \emph{a priori} to be $C^1$ by standard arguments) must have $\frac{\partial u}{\partial t}=X_{H_0}(u(s,t))$ everywhere, in view of which $u(0,\cdot)$ is a $1$-periodic orbit of $\phi_{H_0}^{t}$.  Since the only such orbits belonging to $\mathcal{U}$ are the elements of $\mathcal{N}$, this proves the proposition.
\end{proof}

If $\ep>0$, define \[ \mathcal{M}_{J_t,H}^{\ep}(\mathcal{U})=\{u\in  \mathcal{M}_{J_t,H}(\mathcal{U})|\int_{\mathbb{R}\times S^1}\left|\frac{\partial u}{\partial s}\right|^{2}_{J_t}<\ep\},\] and\[ \mathcal{S}^{\ep}_{J_t,H}(\mathcal{U})=\{\gamma\in \mathcal{U}|  \gamma=u(0,\cdot)\mbox{ for some }u\in \mathcal{M}_{J_t,H}^{\ep}(\mathcal{U})\} \]

Let \[ \hbar\{J_t\}=\inf\{\int_{S^2}u^*\Omega| u\co S^2\to U, \delbar_{J_t}u=0 \mbox{ for some }t, u\mbox{ is nonconstant}\}.\]
Of course, Gromov compactness implies that $\hbar\{J_t\}>0$.  

\begin{prop} \label{appprop} (Cf. [5, Theorem 3])  Let $0<\delta<\hbar\{J_t\}/2$.  Then there is $\eta>0$ with the following properties.  If $\|H-H_0\|_{C^2}<\eta$, and if $J'_t$ is a $(\mathbb{R}/\mathbb{Z})$-parametrized family of almost complex structures with $\|J'_t-J_t\|_{C^2}<\eta$ for each $t$,  then whenever $\ep<\hbar\{J_t\}$ $\mathcal{S}^{\ep}_{J'_t,H}(\mathcal{U})$ is contained in the interior of $\mathcal{U}$.  Further, for $\|H-H_0\|_{C^2}<\eta$ and $\|J'_t-J_t\|_{C^2}<\eta$, $\mathcal{M}^{\ep}_{J'_t,H}(\mathcal{U})$ is independent of $\ep$ for all choices of $\ep\in [\delta,\hbar\{J_t\})$.
\end{prop}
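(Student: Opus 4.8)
The plan is to prove both assertions by contradiction, using Gromov--Floer compactness together with Proposition \ref{invar}, which is exactly what pins down the possible limiting objects. The key observation is that the proof of Proposition \ref{invar} actually shows more than its statement: for the unperturbed pair $(J_t,H_0)$, every $u\in\mathcal{M}_{J_t,H_0}(\mathcal{U})$ is a \emph{constant} cylinder $u(s,t)=\gamma(t)$ with $\gamma\in\mathcal{N}$, since the action identity forces $\partial u/\partial t=X_{H_0}(u)$ and hence $\partial u/\partial s\equiv 0$. Consequently any limit, as the data degenerate to $(J_t,H_0)$, of solutions of bounded energy staying in $\mathcal{U}$ must collapse to a single such constant cylinder, and in particular must have energy tending to $0$. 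Since $\mathcal{U}$ is the closure of a neighbourhood of $\mathcal{N}$, the set $\mathcal{N}$ lies in the interior of $\mathcal{U}$, disjoint from the closed set $\partial\mathcal{U}$; both assertions follow from this.

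For the first assertion, suppose it fails. Then there are $C^2$-convergent sequences $H_n\to H_0$, $J'_{n,t}\to J_t$, numbers $\ep_n<\hbar\{J_t\}$, and solutions $u_n\in\mathcal{M}^{\ep_n}_{J'_{n,t},H_n}(\mathcal{U})$ with $u_n(0,\cdot)\in\partial\mathcal{U}$. The uniform energy bound $E(u_n):=\int_{\mathbb{R}\times S^1}|\partial u_n/\partial s|^2_{J'_{n,t}}<\hbar\{J_t\}$, together with $J'_{n,t}\to J_t$, precludes bubbling of $J_t$-holomorphic spheres in $U$ by the definition of $\hbar\{J_t\}$. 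Standard Gromov--Floer compactness then produces, along a subsequence, a broken trajectory $(v^1,\dots,v^k)$ with each $v^i\in\mathcal{M}_{J_t,H_0}(\mathcal{U})$, no sphere bubbles, and $E(u_n)\to\sum_i E(v^i)$ (no energy lost). By the strengthened form of Proposition \ref{invar} each $v^i$ is a constant cylinder at some orbit of $\mathcal{N}$; since consecutive pieces of a broken trajectory share an asymptotic orbit, all the $v^i$ equal one constant cylinder $\gamma_\infty\in\mathcal{N}$, so $E(u_n)\to 0$ and the loops $u_n(0,\cdot)$ converge uniformly to $\gamma_\infty$. But then $\gamma_\infty\in\partial\mathcal{U}$, contradicting $\gamma_\infty\in\mathcal{N}\subset\mathrm{int}\,\mathcal{U}$.

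For the second assertion, note that $\mathcal{M}^{\ep}_{J'_t,H}(\mathcal{U})\subseteq\mathcal{M}^{\ep'}_{J'_t,H}(\mathcal{U})$ whenever $\ep\le\ep'$, so it suffices to find $\eta>0$ such that, when $\|H-H_0\|_{C^2}<\eta$ and $\|J'_t-J_t\|_{C^2}<\eta$, no $u\in\mathcal{M}_{J'_t,H}(\mathcal{U})$ has energy in the half-open interval $[\delta,\hbar\{J_t\})$; then $\mathcal{M}^{\ep}_{J'_t,H}(\mathcal{U})=\mathcal{M}^{\delta}_{J'_t,H}(\mathcal{U})$ for every $\ep\in[\delta,\hbar\{J_t\})$. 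If no such $\eta$ existed one would obtain $H_n\to H_0$, $J'_{n,t}\to J_t$ and $u_n\in\mathcal{M}_{J'_{n,t},H_n}(\mathcal{U})$ with $\delta\le E(u_n)<\hbar\{J_t\}$; the compactness argument just used (bubbling again excluded by the energy bound) forces $E(u_n)\to 0$, contradicting $E(u_n)\ge\delta$. Intersecting this $\eta$ with the one from the first assertion finishes the proof.

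The step I expect to be the main obstacle is the compactness claim used in both parts: that a sequence of solutions for data $C^2$-close to $(J_t,H_0)$, with energy below $\hbar\{J_t\}$ and confined to $\mathcal{U}$, Gromov--Floer-converges \emph{without energy loss} to a broken trajectory in $\mathcal{M}_{J_t,H_0}(\mathcal{U})$. The delicacy is that the orbits forming $\mathcal{N}$ are highly degenerate (a clean/Morse--Bott family, not isolated nondegenerate orbits), so one must rule out energy escaping into a long neck or into a nonconstant flow segment inside the critical manifold. What rescues the argument is precisely Proposition \ref{invar}: for the limiting data there are no nonconstant finite-energy Floer cylinders confined to $\mathcal{U}$, so no such segment can carry positive energy and the limit is forced to be constant. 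This is the analogue in the present disc-bundle setting of [5, Theorem 3], and the argument is modelled on the one given there; one also uses, to keep the bubbling threshold uniform, that $J_t$-holomorphic spheres in $U$ persist under small $C^2$-perturbations of the almost complex structure.
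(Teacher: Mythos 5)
Your argument for the first assertion is correct and essentially matches the paper's: a sequence $u_n\in\mathcal{M}^{\ep}_{J'_{n,t},H_n}(\mathcal{U})$ with $u_n(s_n,\cdot)\notin\mathcal{U}^{\circ}$, translated by $s_n$, converges in $C^\infty_{loc}$ (modulo bubbling, which is excluded by the energy bound) to some $u_\infty\in\mathcal{M}_{J_t,H_0}(\mathcal{U})$, and then $\mathcal{S}_{J_t,H_0}(\mathcal{U})=\mathcal{N}$ gives the contradiction. In fact the paper does not need full broken-trajectory compactness here; local convergence together with the closedness of $\mathcal{U}$ and Proposition \ref{invar} already suffice, so your invocation of the full Floer compactness package is unnecessary overhead for this part.

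For the second assertion, however, there is a genuine gap, and you yourself put your finger on it. Your argument requires that a sequence $u_n$ with data degenerating to $(J_t,H_0)$, energy in $[\delta,\hbar\{J_t\})$, and slices in $\mathcal{U}$, Gromov--Floer-converge to a broken trajectory \emph{with no energy loss}. In the clean-intersection setting the asymptotic orbits are far from nondegenerate, so the ``no energy loss'' claim is not part of ``standard Gromov--Floer compactness'': the correct limiting object is a Morse--Bott building (Floer cylinders joined by gradient segments inside $N$), and proving total-energy conservation for that compactification requires exponential decay transverse to the critical manifold and a nontrivial cascades-type analysis. Proposition \ref{invar} tells you that each Floer cylinder in the (hypothetical) limit is constant, but it does not, by itself, establish that the limit \emph{is} such a building, nor that energy cannot be lost between the breaking pieces; your appeal to it as ``what rescues the argument'' therefore does not close the gap.

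The paper avoids this machinery entirely and gives a self-contained energy estimate for the second assertion. After shrinking $\eta$, each $1$-periodic orbit $\gamma$ of $X_H$ in $\mathcal{U}$ admits a short homotopy $u_\gamma$ into $\mathcal{N}$ with $|\int u_\gamma^*\Omega|<\delta/4$, and $|H(t,\gamma_+(t))-H(t,\gamma_-(t))|<\delta/2$ because all such orbits approach $N$, on which $H_0$ is constant. Given $u\in\mathcal{M}^{\ep}_{J'_t,H}(\mathcal{U})$, one caps its ends with $u_{\gamma_\pm}$ to produce a cylinder $\tilde u$ whose boundary loops lie in $\mathcal{N}$; the homotopy argument already used in the proof of Proposition \ref{invar} shows $\int_C\tilde u^*\Omega=0$, hence $|\int u^*\Omega|<\delta/2$. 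Combined with the action identity
\[
\int\Bigl|\frac{\partial u}{\partial s}\Bigr|_{J'_t}^2
=\int u^*\Omega+\int_{\mathbb{R}/\mathbb{Z}}\bigl(H(t,\gamma_+(t))-H(t,\gamma_-(t))\bigr)\,dt,
\]
this yields $E(u)<\delta$ directly, so $\mathcal{M}^{\ep}_{J'_t,H}(\mathcal{U})=\mathcal{M}^{\delta}_{J'_t,H}(\mathcal{U})$ for all $\ep\in[\delta,\hbar\{J_t\})$ with no compactness input at all. You should replace the Morse--Bott compactness step by an argument of this kind, or else supply a proof of the no-energy-loss statement in the clean-intersection setting.
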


\begin{proof} Suppose that $H_n\to H_0$ and $J_{t,n}\to J_t$ in $C^2$-norm, and that $u_n\in \mathcal{M}^{\ep}_{J_{t,n},H_n}(\mathcal{U})$ where $\ep<\hbar\{J_t\}$.  Suppose that there were $s_n\in\mathbb{R}$ having the property that $u_n(s_n,\cdot)\notin \mathcal{U}^{\circ}$.  Gromov compactness applied to the maps $(s,t)\mapsto u_n(s-s_n,t)$ then implies that the $u_n(s_n,\cdot)$ converge to some element of $\mathcal{S}_{J_t,H_0}(\mathcal{U})$ (bubbling is precluded because the $u_n$ have energy less than the minimal energy of any $J_t$-holomorphic sphere in $U$).  But we have shown that  $\mathcal{S}_{J_t,H_0}(\mathcal{U})=\mathcal{N}$, so since $\mathcal{U}^{\circ}$ is a neighborhood of $\mathcal{N}$ this is a contradiction, which proves the first statement of the proposition.

For  $\|H-H_0\|_{C^2}$ small, any one-periodic orbit $\gamma$ of $X_{H}$ in $\mathcal{U}$ is close to a $1$-periodic orbit of $X_{H_0}$, \emph{i.e.}, to a loop that belongs to $\mathcal{N}$.  As such, we can choose $\eta$ small enough that (in addition to the first statement of the proposition holding), for $\|H-H_0\|_{C^2}<\eta$, whenever $\gamma\in\mathcal{U}$ is a $1$-periodic orbit of $X_{H}$ there exists a map $u_{\gamma}\co [0,1]\times(\mathbb{R}/\mathbb{Z})\to U$ such that $u_{\gamma}(0,\cdot)=\gamma$,  $u_{\gamma}(1,\cdot)\in\mathcal{N}$, and $\left|\int_{[0,1]\times(\mathbb{R}/\mathbb{Z})}u_{\gamma}^{*}\Omega\right|<\delta/4$.  Let $C$ denote the cylinder obtained by attaching copies $C_-$ and $C_+$ of $[0,1]\times(\mathbb{R}/\mathbb{Z})$ to, respectively, the left and right ends of $[-\infty,\infty]\times(\mathbb{R}/\mathbb{Z})$.  
Given $u\in\mathcal{M}^{\ep}_{J'_t,H}(\mathcal{U})$, such that $u(\pm\infty,\cdot)\to\gamma_{\pm}$, define 
$ \tilde{u}\co  C\to U$ by setting $\tilde{u}(s,t)$ equal to $u_{\gamma_-}(1-s,t)$ on $C_-$, to $u(s,t)$ on $[-\infty,\infty]\times(\mathbb{R}/\mathbb{Z})$, and to $u_{\gamma_+}(s,t)$ on $C_+$.  Since $\tilde{u}$ maps the boundary loops of $C$ to loops belonging to $\mathcal{N}$, an argument identical to that used in the proof of Proposition \ref{invar} establishes that $\int_C \tilde{u}^*\Omega=0$.  Hence \[ \left| \int_{\mathbb{R}\times(\mathbb{R}/\mathbb{Z})}u^*\Omega\right|\leq \left|\int_{[0,1]\times(\mathbb{R}/\mathbb{Z})}u_{\gamma_-}^{*}\Omega\right|+
\left|\int_{[0,1]\times(\mathbb{R}/\mathbb{Z})}u_{\gamma_-}^{*}\Omega\right|<\delta/2\] whenever $u\in \mathcal{M}_{J'_t,H}^{\ep}(\mathcal{U})$ with $\|H-H_0\|_{C^2}<\eta$ and $\|J'_t-J_t\|_{C^2}<\eta$.  

Now since $H_0$ is constant on $N$ and since the $1$-periodic orbits of $H$ in $\mathcal{U}$ approach $N$ as $H$ approaches $H_0$, by decreasing $\eta$ if necessary we can arrange that, whenever $\gamma_{\pm}$ are $1$-periodic orbits of $X_H$ in $\mathcal{U}$ and $\|H-H_0\|_{C^2}<\eta$, we have $|H(t,\gamma_+(t))-H(t,\gamma_-(t))|<\delta/2$ for all $t$.  So if 
$u\in \mathcal{M}_{J_t,H}^{\ep}(\mathcal{U})$ is as above, then (for any capping $w$ of $\gamma_-$) \begin{align*} \int_{\mathbb{R}\times(\mathbb{R}/\mathbb{Z})}\left|\frac{\partial u}{\partial s}\right|^{2}_{J'_t}dsdt&=\mathcal{A}_{H}([\gamma_-,w])-\mathcal{A}_H([\gamma_+,w\#u])\\&=\int_{\mathbb{R}\times(\mathbb{R}/\mathbb{Z})}u^*\Omega+\int_{\mathbb{R}/\mathbb{Z}}(H(t,\gamma_+(t))-H(t,\gamma_-(t)))dt<\delta.\end{align*}  This proves that $\mathcal{M}^{\ep}_{J'_t,H}(\mathcal{U})$  coincides with $\mathcal{M}^{\delta}_{J'_t,H}(\mathcal{U})$ 
whenever $\ep\in [\delta,\hbar\{J_t\})$.
\end{proof}
  
Given this, choosing any $\delta<\hbar\{J_t\}/2$, standard arguments allow one to define the local Floer complex $CF^{loc}_{*}(H,H_0,\mathcal{U})$ over $\mathbb{Z}_2$ by choosing a generic Hamiltonian $H$ with $\|H-H_0\|_{C^2}<\eta$ and using as generators those periodic orbits of $X_{H}$ which lie in $\mathcal{U}$.  The boundary operator $\partial^{loc}_{H}$ may be \emph{a priori} defined to count elements of $\mathcal{M}^{2\delta}_{J'_t,H}(\mathcal{U})$ for generic $J'_t$ in the usual fashion; the fact that any such element in fact belongs to $\mathcal{M}^{\delta}_{J'_t,H}(\mathcal{U})$ implies that one has $(\partial^{loc}_{H})^2=0$.  
As in [5, Theorem 4] the resulting homology $HF^{loc}_{*}(H_0,\mathcal{U})$  is independent of the choice of $H$ (and of $\delta$) provided that $H$ is appropriately $C^2$-close to $H_0$.  

Now all of the above may be recast in terms of local Lagrangian Floer homology for the intersection of the diagonal $\Delta$ with the graph $gr(\phi_{H_0})$ in $(P\times P,\Omega\oplus(-\Omega))$, with the role of $\mathcal{N}$ played by the constant paths at points of $N'=\{(n,n)|n\in N\}$ and the role of $\mathcal{U}$ played by paths from $\Delta$ to    $gr(\phi_{H_0})$ which remain near $N'$ and are close to being constant (here one also needs to choose the parameter $\delta$ of Proposition \ref{appprop} small enough as to preclude the bubbling off of pseudoholomorphic discs).  Note that our assumptions on $H_0$ show that $N'$ is a clean intersection between $\Delta$ and $gr(\phi_{H_0})$, which is to say that $N'$ is a compact connected component of $\Delta\cap gr(\phi_{H_0})$ satisfying $T_xN=(T_x\Delta)\oplus (T_x gr(\phi_{H_0}))$ for all $x\in N$. On the one hand, the usual correspondence between Hamiltonian Floer homology and the Lagrangian Floer homology of graphs shows that $HF^{loc}_{*}(H_0,\mathcal{U})$ is isomorphic to the resulting local Lagrangian Floer homology.  On the other hand, local Lagrangian Floer homology in the context 
 of clean intersections has been analyzed in \cite{P}, in which it is shown (as part 3 of Theorem 3.4.11) that the local Floer homology is isomorphic to the Morse homology of the intersection $N'$.  So since $N'$ is diffeomorphic to $N$, we conclude that:

\begin{theorem}(\cite{P})\label{morsebott} $HF^{loc}_{*}(H_0,\mathcal{U})=H_*(N,\mathbb{Z}_2)$.
\end{theorem}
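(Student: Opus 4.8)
The plan is to follow the route sketched immediately above the theorem: transport the local Hamiltonian Floer homology $HF^{loc}_{*}(H_0,\mathcal{U})$ to a local Lagrangian Floer homology via the graph construction, check that the relevant intersection is clean, and then quote Po\'zniak's computation of local Lagrangian Floer homology at a clean intersection. First I would record the clean-intersection property. Working in $(P\times P,\Omega\oplus(-\Omega))$ with Lagrangians $\Delta$ and $gr(\phi_{H_0})$, set $N'=\{(n,n)\co n\in N\}$. Since $T_{(x,x)}\Delta=\{(v,v)\co v\in T_xP\}$ and $T_{(x,x)}gr(\phi_{H_0})=\{(v,(d\phi_{H_0})_xv)\co v\in T_xP\}$, the intersection of these tangent spaces is $\{(v,v)\co (d\phi_{H_0})_xv=v\}$, which by the hypothesis $\ker(Id-(d\phi_{H_0})_x)=T_xN$ equals $\{(v,v)\co v\in T_xN\}=T_{(x,x)}N'$; together with the compactness and connectedness of $N$ this exhibits $N'$ as a clean intersection component of $\Delta\cap gr(\phi_{H_0})$ in the sense required by \cite{P}.

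Next I would set up the dictionary between the two local theories. Under the usual correspondence between Hamiltonian Floer theory for $H$ on $P$ and Lagrangian Floer theory for $(\Delta,gr(\phi_H))$ on $P\times P$, a $1$-periodic orbit of $X_H$ lying in $\mathcal{U}$ corresponds to an intersection point of $\Delta$ with $gr(\phi_H)$ near $N'$ (equivalently, a short path between the two Lagrangians), and a finite-energy Floer cylinder $u$ with $u(s,\cdot)\in\mathcal{U}$ for all $s$ corresponds to a finite-area $J$-holomorphic strip with boundary on $\Delta$ and $gr(\phi_H)$ staying in a prescribed neighborhood of $N'$; the energy cutoff defining $\mathcal{M}^{\ep}_{J_t,H}(\mathcal{U})$ matches the analogous area cutoff on the Lagrangian side. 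The neighborhood $\mathcal{U}$ of $\mathcal{N}$ is matched with the space of paths from $\Delta$ to $gr(\phi_{H_0})$ which remain near $N'$ and are $C^0$-close to constant, and one chooses the parameter $\delta$ of Proposition \ref{appprop} small enough to exclude both sphere bubbles (as is already done there) and disc bubbles with boundary on $\Delta$ or $gr(\phi_{H_0})$. Granting this, Propositions \ref{invar} and \ref{appprop} carry over verbatim to show that the two local complexes, with their boundary operators, are identified, so $HF^{loc}_{*}(H_0,\mathcal{U})$ is isomorphic to the local Lagrangian Floer homology of $(\Delta,gr(\phi_{H_0}))$ at $N'$.

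Finally I would invoke Po\'zniak: part 3 of Theorem 3.4.11 of \cite{P} identifies the local Lagrangian Floer homology at a clean intersection with the Morse homology of that intersection manifold. Applying this to $N'$, and noting that $N'\cong N$ via either projection and that Morse homology with $\mathbb{Z}_2$ coefficients agrees with singular homology, we obtain $HF^{loc}_{*}(H_0,\mathcal{U})\cong H_*(N;\mathbb{Z}_2)$, as claimed.

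The step I expect to be the main obstacle is the dictionary of the middle paragraph: making the identification of the \emph{local} objects genuinely precise — matching generators, the solution spaces $\mathcal{M}^{\ep}$ with their Lagrangian-strip counterparts, the action/area windows, and especially verifying that the chosen $\mathcal{U}$ is an admissible Darboux--Weinstein-type neighborhood in the framework of \cite{P}, together with selecting a single small $\delta$ that simultaneously rules out all sphere and disc bubbling on both sides. By contrast, the clean-intersection verification and the closing appeal to \cite{P} are essentially formal once this translation is pinned down.
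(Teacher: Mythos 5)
Your proposal follows exactly the route the paper takes: pass to local Lagrangian Floer homology of $(\Delta, gr(\phi_{H_0}))$ in $P\times P$ via the graph construction, verify that $N'$ is a clean intersection component, and quote part 3 of Theorem 3.4.11 of \cite{P}. The only substantive difference is that you spell out the tangent-space computation $T_{(x,x)}\Delta\cap T_{(x,x)}gr(\phi_{H_0})=\{(v,v)\co v\in\ker(Id-(d\phi_{H_0})_x)\}=T_{(x,x)}N'$, which the paper asserts without detail, and you flag (as the paper also tacitly does) that the translation of the local moduli spaces and the choice of $\delta$ excluding disc bubbling are where the real care is needed; this matches the paper's own level of rigor.
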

\begin{remark} Since, for the generators of $CF^{loc}_{*}(H,H_0,\mathcal{U})$, we take Hamiltonian periodic orbits without any ``capping data,'' the grading of the local Floer chain complex should be understood as a \emph{relative} grading by the cyclic group $\mathbb{Z}/\Gamma$ where $\Gamma$ is twice the minimal Chern number of $P$.   In particular, Theorem \ref{morsebott} is an isomorphism of relatively $\mathbb{Z}$-graded groups when  $c_1(TP)$ vanishes.

\end{remark}

\end{document}